\documentclass{amsart}
\usepackage{amssymb,amsmath,amsthm,epsf,epsfig,dsfont,bbm}
\usepackage{latexsym}
\usepackage{upref, eucal}
\usepackage[all]{xy}
\usepackage[margin=100pt]{geometry}

\parskip 0.1cm

\newcommand {\nc} {\newcommand}
\newcommand {\enm} {\ensuremath}

\def \d{\delta}

\nc {\bdm} {\begin{displaymath}}
\nc {\edm} {\end{displaymath}}

\newtheorem {theorem} {\bf{Theorem}}[section]
\newtheorem {lemma}[theorem] {\bf Lemma}
\newtheorem {proposition}[theorem] {\bf Proposition}

\newtheorem {definition}[theorem]{\bf Definition}
\newtheorem {corollary}[theorem] {\bf Corollary}
\numberwithin {equation}{section}

\newcommand\FF{\mathbb{F}}

\newcommand{\Ou}{\enm{\mathcal{O}}}

\nc{\J}{\enm{\mathcal{J} }}
\nc {\Z} {\enm{\mathbb{Z}}}
\nc {\form}[1] {\enm{\mbox{\underline{for}}}_{#1}}
\nc {\prol}[1] {\enm{\mbox{\underline{prol}}_{{#1}^*}}}

\nc {\stk} {\stackrel}

\newcommand{\map}{\rightarrow}

\newcommand{\Pn}[2] {\ensuremath{ {\mathbb{P}}^{#1}_{#2}}}
\nc{\Quot}[3]{\enm{ {\mathfrak{Quot}_{ {#1}/{#2}/{#3}}}}}
\nc{\Hilb}[2]{\enm{ {\mathfrak{Hilb}_{ {#1}/{#2}}}}}
\newcommand{\mfrak}[1]{\mathfrak{#1}}

\newcommand{\bb}[1]{\mathbb{#1}}

\nc {\Coh}[4] {\ensuremath{H^{#1}(\Pn{#2}{},{#3}({#4}))}}
\nc {\Ch}[3] {\enm{H^{#1}(X_t,{#2}_t({#3}))}}
\nc {\Qphi}[4]{\enm{ {\mathfrak{Quot}^{~#4}_{ {#1}/{#2}/{#3}}}}}
\nc {\Gra}[4]{\enm{ {\mathfrak{Grass}_{#2}({#3},{#4})}}}
\nc {\HomA}[2]{\enm{\mathrm{Hom}_A{#1}{#2}}}
\nc {\tr}{\mathrm{tr}}

\nc {\C}[2]{\enm{\left(\begin{array}{l} {#1} \\ {#2} \end{array} \right)}}
\nc {\mat}[4]{\enm{\left(\begin{array}{ll}{#1} & {#2} \\ {#3} & {#4}
\end{array}\right)}}

\def \vp{\varphi}
\def \mb{\mbox}


 \def \Z{{\mathbb Z}}

   \def \h{\hat{\ }}

\def \d{\delta}   \def \bF{{\bf F}}
   
\def \bA{{\mathbb A}}  

 \def \bF{{\mathbb F}}

\def \hG{\hat{\mathbb{G}}_{\mathrm{a}}}

\def \R1{R((q))[q']\h}

\usepackage{xcolor}

\DeclareMathOperator{\Spf}{\mathrm{Spf}}

\DeclareMathOperator{\Lie}{\mathrm{Lie}}

\newcommand{\Hom}{\mathrm{Hom}}

\newcommand{\Ext}{\mathrm{Ext}}

\newcommand{\bI}{{\bf I}}

\newcommand{\teich}{\mathfrak{v}}


\nc{\tR}{\tilde{R}}
\nc{\bx}{\mathbf{x}}
\nc{\by}{\mathbf{y}}
\nc{\bz}{\mathbf{z}}
\nc{\ba}{\mathbf{a}}
\nc{\Fp}{\tilde{F}}
\nc{\Rp}{\tilde{R}}
\nc{\mlow}{m_{\mathrm{l}}}
\nc{\mup}{m_{\mathrm{u}}}
\nc{\ord}{{\mathrm{ord}}}
\nc{\bX}{\mathbf{X}}
\nc{\bH}{\mathbf{H}}
\nc{\bXp}{\mathbf{X}_{\mathrm{prim}}}
\nc{\bPsi}{\mathbf{\Psi}}
\nc{\mult}{\mathrm{mult}}
\nc{\mbB}{\mathbbm{B}}
\nc{\mfor}[1]{{#1}^{\mathrm{for}}}
\nc{\Hdr}{\bH^{*}_{\mathrm{dR}}}
\nc{\bt}{{\bf t}}
\nc{\beqar}{\begin{eqnarray*}}
\nc{\eeqar}{\end{eqnarray*}}
\nc{\fra}{\mfrak{f}}
\nc{\tW}{\tilde{W}}

\nc{\bo}{{\bf b}}
\nc{\hq}{{\hat{q}}}
\nc{\wHH}{\mathbf{H}_{\d}(E)}
\nc{\qH}{\mfrak{q}_{\wHH}}
\nc{\bfH}{\mathbf{H}(E)}
\nc{\admissible}{elliptic }
\nc {\rnk}{\mathrm{rank}}

\nc {\cblue}{\color{black}}








\title
[Delta Theory of Anderson Modules I: Differential Characters]
{Delta Theory of Anderson Modules I: Differential Characters}

\author{Sudip Pandit}
\address{Department of Mathematics, 
Indian Institute of Technology Gandhinagar, Gujarat 382355}
\email{sudip.pandit@iitgn.ac.in}

\author{Arnab Saha}
\address{Department of Mathematics, 
Indian Institute of Technology Gandhinagar, Gujarat 382355}
\email{arnab.saha@iitgn.ac.in}

\date{}

\date{}

\subjclass[2010]{Primary 11G09, 14G17, 14L05, 14L15, 14L17, 14B20.}

\keywords{Witt vectors, jet spaces, Drinfeld module, Anderson module, $\d$-character}

\allowdisplaybreaks

\begin{document}
\begin{abstract}
In this article we develop the theory of differential or delta characters (the 
arithmetic analogue of Manin characters) of Anderson modules. 
Here we generalize the construction by Borger and Saha of the canonical 
finite rank $R$-module $\mathbf{H}(E)$ with a semilinear operator on it
 to any Anderson module $E$,
where $R$ is the base ring which is a $\pi$-adically complete discrete valuation ring with a fixed lift of Frobenius $\phi$ on it. 
Then we show that $\mathbf{H}(E)$ admits a functorial map to the de Rham 
cohomology $\mathbf{H}_{\mathrm{dR}}^*(E)$ of $E$ which also preserves the
Hodge filtration. 

We also prove that the module of delta characters 
$\mathbf{X}_{\infty}(E)$ is  finite and free as an $R\{\phi^{*}\}$-module. 
This leads to a strengthened version of an analogous result by Buium on the generation of differential characters of abelian varieties. 
We also construct a family of differential modular functions that play the 
analogous role of $f_{\mathrm{jet}}$ constructed by Buium for elliptic curves.

In a subsequent article, the finite rank $R$-module $\mathbf{H}(E)$ will lead to 
the construction of a canonical $z$-isocrystal $\mathbf{H}_{\delta}(E)$ with a Hodge-Pink
filtration on it and we will show that $\mathbf{H}_{\delta}(E)$ is an admissible
$z$-isocrystal. 
\end{abstract}

\maketitle
\section{Introduction}

 The theory of arithmetic jet spaces, developed by Buium, is inspired by 
the differential algebra theory over function fields. In differential algebra,
let $K$ be a function field
with a fixed derivation $\partial$ on it. For any $K$-algebra $B$ we denote 
${D}_n(B)$ the ring of truncated polynomials of degree $n$ with 
coefficients in $B$. Then ${D}_n(B)$ naturally inherits a $K$-algebra 
structure by the Hasse-Schmidt differentiation map \cite{gil02}. 
When the fixed derivation $\partial$
on $K$ is non-zero, the $K$-algebra structure on ${D}_n(B)$ described 
as above is not the usual scalar multiplication.
Given a (group) scheme $G$ defined over $K$ with fixed 
derivation $\partial$ on it, one defines the $n$-th jet space functor $J^nG$ 
whose
$B$-points are $G({D}_n(B))$. Then the functor 
$J^nG$ is representable by a (group)
scheme and $\{J^nG\}_{n=0}^\infty$ form a system of schemes which is known
as the {\it canonical prolongation sequence} associated to $G$. In fact, if the
fixed derivation $\partial$ on $R$ is zero, then $J^1G$ is the tangent 
bundle of $G$.

The global sections $\Ou(J^nG)$ can be thought of as the ring of the $n$-th
order differential functions of $G$. For instance, if 
$G$ is an elliptic curve,
then one can show that there always exists a differential function $\Theta 
\in \Ou(J^2G)$ which is also a morphism of group schemes from $J^2G$ to the
additive group scheme $\bb{G}_a$. Such a $\Theta$ is called a differential
character and is also known as Manin character. As for example, if $G$ is given 
by the Legendre equation $y^2=x(x-1)(x-t)$ defined over the function field $K=
\bb{C}(t)$ with derivation $\partial = \frac{d}{dt}$ on it, then 
$$ 
        \Theta(x,y,x',y',x'',y'')= \frac{y}{2(x-t)^2} - \frac{d}{dt}
        \left[2t(t-1)\frac{x'}{y} \right] + 2t(t-1)x'\frac{y'}{y^2}
$$
where $x,y,x',y',x'',y''$ are local coordinates of $J^2G$. The existence of 
such a $\Theta$ is due to the Picard-Fuchs equation. Given a $K$-rational
point $P \in G(K)$, using the derivation $\partial$, we can consider its
canonical lift $\nabla :G(K) \map J^2G(K)$ given by $\nabla(x,y) =
(x,y,\partial x, \partial y, \partial^2 x, \partial^2 y)$. However 
$\nabla$ is only a map at the level of $K$-points and not of schemes.
The composition $\Theta \circ \nabla: G(K) \map \bb{G}_a(K)$ is then 
a group homomorphism, and hence the torsion points of $G(K)$ are in the 
kernel of the map since $\bb{G}_a(K)$ is torsion free. Such a character $\Theta$
was applied by Manin to prove the Lang-Mordell conjecture for abelian varieties 
over function fields \cite{Manin63}. Later, Buium gave an independent proof 
by using other methods but still using the Manin map \cite{bui92}.

The theory of arithmetic jet spaces over number rings, as 
developed by Buium, also respects similar
lines. Here the notion of derivation on the base ring is replaced by a 
$\pi$-derivation $\d$ which naturally arises from the $\pi$-typical Witt 
vectors. The analogy of $\pi$-derivations with the usual derivations is as 
follows:
note that giving a derivation on a ring $B$ is equivalent to giving a ring 
homomorphism $B \map {D}_1(B)$. In the same essence, giving a $\pi$-derivation
on a ring $B$ is equivalent to giving a ring map $B \map W_1(B)$ where $W_1(B)$
is the ring of $\pi$-typical Witt vectors of length 2. As for example,
if $R$ is an unramified $p$-adically completed 
extension of $\Z_p$ and 
$\pi=p$, then the Fermat quotient operator $\d x = \frac{\phi(x)- x^p}{p}$ 
is the unique $p$-derivation, where the endomorphism $\phi:R \map R$ is the 
lift of the $p$-th power Frobenius endomorphism of $R/pR$. Such a $\phi$ is the
unique lift of Frobenius associated to the $\pi$-derivation $\d$.

Buium developed the theory of $\d$-characters for 
abelian varieties in \cite{Buium 1995}. 
This led to the development of delta geometry 
\cite{Barc,Buium 2000,Buium-Miller,BuSa1, BuSa2,Hurl} and 
 found remarkable applications in 
diophantine geometry as in \cite{Buium 1996, BP}. Such $\d$-geometric
objects have also been used in the construction of the prismatic cohomology by
Bhatt and Scholze \cite{BhSc}.

In \cite{Drinfeld_74}, Drinfeld introduced elliptic modules 
(now known as Drinfeld modules), and using them he 
proved the global Langlands' correspondence for $\mathrm{GL}_2$ over function 
fields in \cite{Drinfeld_80}. 
Later in  \cite{Anderson 1986}, Anderson extended these objects to higher 
dimensions, called $t$-modules (now known as Anderson modules). 

In a previous article by Borger and Saha \cite{BS b}, 
for Drinfeld modules $E$ 
defined over a discrete valuation ring $R$ with a fixed $\pi$-derivation $\d$, 
they construct a canonical $R$-module $\bH(E)$ with a semilinear operator
$\fra$ on it along with a filtration. It was further shown 
that $\bH(E)$ is in fact a free $R$-module of 
finite rank. This implied that the $R\{\phi^*\}$-module $\bX_\infty(E)$ (the
module of $\d$-characters) is free and finitely generated.

In this paper, we generalize the construction of $\bH(E)$ to 
abelian Anderson modules (defined in Section
\ref{AndMod}). 
In Theorem \ref{finite_dim_thm}, we show that 
$\bH(E)$ is a free and
finitely generated $R$-module. We would like to remark that our proof of
showing the finite generation of $\bH(E)$ is distinct from that of \cite{BS b}
as the latter method did not have an immediate generalization to the case of 
Anderson modules.
As a consequence we obtain our Corollary \ref{FinGenX} which shows that the 
$R\{\phi^*\}$-module $\bX_\infty(E)$ is free and finitely generated. 
Our method also extends to the case of abelian schemes which would imply
a strengthened version of Theorem A in \cite{Buium 1995} by Buium. We also show that $\bH(E)$ functorially maps to the de Rham cohomology $\Hdr(E)$ of $E$.

We now explain our results in greater detail. 

\cblue
Let $q=p^{h}$ be a fixed prime power and $\mathcal{C}$ be a projective, geometrically connected, smooth curve over $\mathbb{F}_{q}$. Fix an $\mathbb{F}_{q}$-rational point $\infty$ on $\mathcal{C}$. Let $A$ be the ring of functions regular outside $\infty$. Fix a maximal ideal $\mathfrak{p}$ of the Dedekind domain
$A$ and $z\in \mathfrak{p}\setminus \mathfrak{p}^{2}$ be an element of degree $f$ such that $A$ is a separable extension of $\FF_{q}[z]$. Note that it is always possible to find such uniformizer for any $A$ (cf. page 18, Proposition $1.4$ in \cite{Silverman}). Let $\hat{A}$  be the $\mathfrak{p}$-adic completion  
of $A$ and $\pi$ be the image of $z$ in $\hat{A}$. Then $\pi$ generates the maximal ideal $\hat{\mathfrak{p}}$. Let $k$ denote the residue field $A/\mathfrak{p}$ which is finite and let $\hq=q^{f}$ be its cardinality. Note that the quotient map $\hat{A}\longrightarrow \FF_{\hq}$ has a unique section and thus $\hat{A}$ is not only an $\mathbb{F}_{q}$-algebra but also an $\FF_{\hq}$-algebra.
Let $\hat{A}$  be the $\mathfrak{p}$-adic completion  
of $A$ and $\pi$ be the image of $z$ in $\hat{A}$. Fix a flat $\hat{A}$-algebra $R$ which is 
$\mathfrak{p}$-adically complete discrete valuation ring. 

\color{black}

Let $R$ have a lift of Frobenius which is an $\hat{A}$-algebra
endomorphism $\phi:R\map R$ that satisfies $\phi(x) \equiv x^{\hat{q}}
\bmod (\mfrak{p}R)$. 
Associated to such a $\phi$, consider the unique $\pi$-derivation $\d:R \map
R$ given by $\d x = \frac{\phi(x)- x^{\hq}}{\pi}$. Then the $\pi$-derivation
$\d$ satisfies the following:
\begin{enumerate}
\item $\d(x+y) = \d x + \d y$
\item $\d(xy) = x^{\hq}\d y + y^{\hq} \d x+ \pi \d x \d y$
\end{enumerate}
for all $x,y \in R$.
Note that $\d$ is uniquely characterised by a ring map of $R$ to the Witt 
vectors of length $2$. Also note that since $R$ is of prime characteristic, 
the operator $\d$ is additive.
For an $R$-algebra $B$, let $W_n(B)$ denote the ring of Witt vectors of
length $n+1$ as in \cite{bor11b}. Then note that $W_n(B)$ is also naturally 
an $R$-algebra via the universal property of Witt vectors.
Given 
any $\pi$-formal scheme $X$ over $\Spf R$ the $n$-th jet space $J^nX$ is 
functorially defined as 
\begin{align}
\label{jetfuncdef}
J^nX(B) := X(W_n(B))
\end{align}
for all $\pi$-adically complete $R$-algebra $B$. Then $J^nX$ is representable 
by a $\pi$-formal scheme over $\Spf R$ that is obtained by glueing over open
affine covers of $X$- this is analogous to Buium's construction in the 
mixed characteristic case as in \cite{Buium 1995}.
For more details on the representability of the jet functor, the reader may
see \cite{bps} \cite{bor11b}. 

\cblue
We now discuss the specific implications of the above definition of jet 
spaces to our setting
where $E$ is an admissible $A$-module of dimension $d$ over $\Spf R$
(defined after Definition \ref{abAnddef} in Section $2$). 
Firstly we will now review the group scheme structure of the jet scheme 
associated to $E$.
Since $E$ is an admissible $A$-module of dimension $d$, the underlying 
$\pi$-formal group scheme $E$ is isomorphic to $\hG^d$ where $\hG$ is 
the additive $\pi$-formal group scheme over $\Spf R$. Hence by the definition
of the jet space functor in (\ref{jetfuncdef}) we have an isomorphism 
$J^nE \simeq W_n^d$ 
where $W_n$ is the affine $(n+1)$-space $\hat{\bA}^{n+1}$ with the 
group structure of the additive $\pi$-typical Witt vectors of length $n+1$.

Let us now examine the natural extension of the $A$-module structure on $E$
to that of $J^nE$. Let the $A$-module
structure on $E$ be denoted by $\vp_E:A \map \mathrm{End}_R(E)$. Hence for any
$a \in A$ and a $\pi$-adically complete $R$-algebra $C$ we have the functorial
map $\vp_E(a): E(C) \map E(C)$ that is a group homomorphism.

Define the $A$-module structure on $J^nE$, denoted by $\vp_{J^nE}$, as
$\vp_{J^nE}(a)(x):= \vp_E(x)$ for all $x \in E(W_n(B)) = J^nE(B)$.
Hence this gives the natural $A$-module structure on $J^nE$ that
is induced from the $A$-module structure on $E$.


\color{black}
Hence we have the following short exact sequence of $A$-module schemes
\begin{align}
\label{fshort}
0 \map N^n \stk{i}{\map} J^nE \stk{u}{\map} E \map 0 
\end{align}
where $u:J^nE \map E$ is the natural projection map and $N^n$ is the
kernel of $u$. We would like to remark that even if $E$ is an Anderson module,
$J^nE$ is not one.

Let $E$ be an abelian Anderson $A$-module of dimension $d$ and rank $r$ over $\Spf R$, and the $A$-linear action of $z$ is given by  $$\varphi_{E}(z)=\displaystyle{\sum_{i=0}^{s}}A_{i}{\tau}^{i}$$
where $A_{0}=\pi I+V(z)$ and $V$ is a nilpotent matrix. We define 
\begin{align*}
	V^{\perp}:=\{u\in \mathrm{Mat}_{1\times d}(R)~| uV(z)=0\}
\end{align*}
Note that $V^{\perp}$ is a free $R$-submodule of $R^{d}$ with rank equal to $h=d-\mathrm{rank}(V(z))$. 
Let $\hG$ denote the $\pi$-formal additive group scheme over $\Spf R$. 
Note that $\hG$ 
naturally has an $R$-module structure given by scalar multiplication.
Then let $(\hG,\vp_{\hG})$ denote the $\pi$-formal additive group scheme 
$\hG$ with the tautological $A$-action that is 
induced from the characteristic map $\theta:A \map R$. 
An $A$-linear morphism of group schemes from $J^{n}E$ to $\hat{\mathbb{G}}_{a}$ is called a \textit{differential} or  \textit{delta character of order $\leq n$} of $E$. We will denote the group of all $\d$-characters of order $n$ as
 ${\bX}_{n}(E)$.

Since $\hG$ is an $R$-module $\pi$-formal scheme over $S$, $\bX_n(E)$
naturally becomes an $R$-module.
Also the inverse system $J^{n+1}E\xrightarrow{u} J^{n}E$ defines a directed system $$\ldots \xrightarrow{u^{*}} {\bX}_{n}(E)\xrightarrow{u^{*}} {\bX}_{n+1}(E) \xrightarrow{u^{*}}\ldots$$ 
of $R$-modules via the pullback map. We define 
$${\bX}_{\infty}(E)=\varinjlim {\bX}_{n}(E).$$

Then the Frobenius morphism on the jet spaces $\phi:J^{n+1}E \map J^nE$ endows 
$\bX_\infty(E)$ with an $R\{\phi^*\}$-module structure where $\phi^*$ is 
additive and $R$-semilinear, that is, $\phi^*$ satisfies 
$$
\phi^*(r \Theta ) = \phi(r) \phi^*\Theta
$$
and $\phi^*\Theta$ is the relative pullback of $\Theta$ induced from $\phi$.


In \cite{BS b}, \cite{BS a}, for all $n$ the lateral Frobenius $\mfrak{f}:
N^n \map N^{n-1}$ was constructed which makes $\{N^n\}_{n=1}^\infty$
into a prolongation sequence. Then $\fra$ satisfies 
\begin{align}
\label{comp}
\phi \circ i \circ \fra = \phi^{\circ 2} \circ i.
\end{align}

Hence pulling back via $\mfrak{f}$ makes
$\varinjlim \Hom_A(N^n,\hG)$ into an $R\{\mfrak{f}^*\}$-module.  
Along the principles of \cite{BS b}, we define the $R$-module
	$$
	\bH(E) 	:= 
	 \varinjlim\frac{\Hom(N^n,\hG)}{i^*\phi^*(\bX_{n-1}(E)_{\phi})}.
	$$
Then by (\ref{comp}), $\mfrak{f}^*$ on $\varinjlim \Hom(N^n,\hG)$ 
descends to $\bH(E)$. Consider the $R$-submodule 
	$$
	\bXp(E):= \varinjlim \bX_n(E)/\phi^*(\bX_{n-1}(E)_{\phi})
	$$	
which satisfies 
\begin{align}
\label{Xpshort}
0 \map \bXp(E) \map \bH(E) \map \mathbf{I}(E) \map 0
\end{align}
where ${\mathbf{I}}(E)$ is an $R$-submodule of $\Ext_A(E,\hG)$ as defined in
Section \ref{fingenX}.
In Section $6$ we discuss the general theory of extensions of $A$-module 
schemes following Section $5$ of \cite{Gekeler_a}. Let 
$\bH^{*}_{\mathrm{dR}}(E)$ 
denote the first de Rham cohomology module of $E$ and $\Ext_A(E,\hG)$ be the 
$R$-module that classifies isomorphism classes of
 $A$-module $\pi$-formal schemes that 
are extensions of $E$ by $\hG$. Then our first result is:
\begin{theorem} \label{intro-ext-derham}
For any abelian Anderson $A$-module $E$ of rank $r$ and dimension $d$,
we have
\begin{enumerate}
\item The $R$-module $\bH^{*}_{\mathrm{dR}}(E)$ 
 is free of rank $r$.
\item The $R$-module $\Ext_A(E,\hG)$ is free of rank $r-h$. 
\end{enumerate}
\end{theorem}
The above results extend the previous result of Gekeler on Drinfeld modules in \cite{Gekeler_b} and of Brownawell and Papanikolas on $t$-modules over a field for $A=\FF_{q}[t]$  in \cite{BrPa}. Our results in Theorem \ref{intro-ext-derham} hold for a general $A$ and any abelian Anderson module $E$. 
 
In Section \ref{ziso} we obtain a natural 
map of short exact sequences

\begin{equation}
	\label{diag-crys-limit-intro}
	\xymatrix{
0  \ar[r] &{\bX}_{\mathrm{prim}}(E)\ar[d]^\Upsilon \ar[r] &{\bH}(E)\ar[d]^\Phi \ar[r] &\mathbf{I}(E)\ar@{^{(}->}[d] \ar[r]&0 \\
0 \ar[r] &\mathrm{Lie} (E)^{*} \ar[r] &\bH^{*}_{\mathrm{dR}}({E}) \ar[r] &\mathrm{Ext}_{A}(E,\hat{\mathbb{G}}_{a}) \ar[r] & 0.}
\end{equation}

Note that apriori it is not clear whether $\bXp(E)$ and $\bH(E)$ are finite 
and free $R$-modules. However it is easy to show that 
after tensoring with $K$ (the fraction field of 
$R$), they are finite dimensional $K$-vector spaces (c.f. Corollary  
\ref{rk_X_prim}). Our next theorem states
that the above $R$-modules $\bXp(E)$ and $\bH(E)$ are finite and 
free over $R$.

\begin{theorem} 
For any abelian Anderson $A$-module $E$ of rank $r$ and dimension $d$,
we have
\begin{enumerate}
\item The $R$-module $\bXp(E)$ is free of rank $h$.
\item The $R$-module ${\bH}(E)$ is free of rank $\leq r$. 
\end{enumerate}
\end{theorem}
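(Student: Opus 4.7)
The plan is to embed both modules into free $R$-modules of the appropriate rank via the comparison diagram (\ref{diag-crys-limit-intro}), exploiting that $R$ is a DVR so that torsion-free finitely generated submodules of free modules are automatically free. I would prove part (2) first, then deduce part (1) by a diagram chase.

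For part (2), the key step is to show that the comparison map $\Phi: \bH(E) \to \bH^*_{\mathrm{dR}}(E)$ is injective. Since $E$ is an abelian Anderson $A$-module of rank $r$, the de Rham cohomology $\bH^*_{\mathrm{dR}}(E)$ is a free $R$-module of rank $r$, so injectivity of $\Phi$ places $\bH(E)$ as a submodule of a rank-$r$ free module over the DVR $R$, forcing it to be free of rank $\leq r$. To establish injectivity, I would represent a class in $\bH(E)$ by a morphism $\mu: N^n \to \hG$; by construction, $\Phi([\mu])$ records the formal differential form obtained by restricting $\mu$ to the infinitesimal data of $E$, modulo exact forms. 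If $\Phi([\mu])$ vanishes, I would use the lateral Frobenius $\fra$ together with the compatibility (\ref{comp}) to promote the de Rham witness of exactness to a genuine $\delta$-character $\Theta \in \bX_{n-1}(E)$ with $\mu = i^*\phi^*\Theta$, so that $[\mu] = 0$ in the quotient defining $\bH(E)$. The admissibility hypothesis on $E$ is what ensures the required lifting problem is solvable.

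Once (2) is in hand, part (1) follows quickly. From (\ref{Xpshort}), $\bXp(E)$ is a submodule of the free $R$-module $\bH(E)$, hence itself free and finitely generated. A diagram chase on (\ref{diag-crys-limit-intro}), using injectivity of $\Phi$, shows $\Upsilon: \bXp(E) \to \mathrm{Lie}(E)^*$ is injective; since $\mathrm{Lie}(E)^*$ is free of rank $d$, this bounds the rank of $\bXp(E)$ above by $d$. The matching lower bound comes from Corollary \ref{rk_X_prim}, which identifies $\dim_K \bXp(E) \otimes_R K$ with $d$ in the elliptic setting.

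The principal obstacle is the injectivity of $\Phi$. The argument in \cite{BS b} does not generalize directly — the paper itself flags this — so the novelty must come from exploiting the lateral Frobenius structure on the prolongation system $\{N^n\}$ together with the admissibility hypothesis to convert de Rham exactness into genuine exactness at the level of $A$-module extensions. This replaces the case-specific computations available for rank-$r$ Drinfeld modules in dimension $d=1$ and is expected to be the technical heart of Theorem \ref{finite_dim_thm}.
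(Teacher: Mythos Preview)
Your approach has a genuine gap: the map $\Phi: \bH(E) \to \bH^*_{\mathrm{dR}}(E)$ is \emph{not} injective in general. The paper's own Theorem~\ref{finite_dim_thm} states that $\Phi_K$ is injective \emph{if and only if} the $d \times d$ matrix $P$ of partial derivatives of the primitive characters is invertible over $K$; this is a nontrivial condition on $E$, not a consequence of ellipticity. So your proposed proof of part (2) via injectivity of $\Phi$ cannot succeed as stated, and the lifting argument you sketch (promoting de Rham exactness to actual triviality in $\bH(E)$) would, if it worked, contradict this conditional statement.

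The paper instead proves the two parts in the opposite order. For part (1), it argues directly: finite generation of $\bXp(E)$ follows from the vanishing of $\bX_{n+1}(E)/(u^*\bX_n(E) + \phi^*(\bX_n(E))_\phi)$ for $n \geq m_u$ (Theorem~\ref{phi*_bij}), and $\pi$-torsion-freeness is checked by an explicit computation using that $\bX_\infty(E)$ is free. Then for part (2), the short exact sequence $0 \to \bXp(E) \to \bH(E) \to \mathbf{I}(E) \to 0$ has both outer terms free (the right term because $\mathbf{I}(E)$ sits inside the free module $\mathrm{Ext}_A(E,\hG)$ of rank $r-d$), so $\bH(E)$ is free of rank at most $d + (r-d) = r$. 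The comparison map $\Phi$ plays no role in establishing freeness.
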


As a consequence of the above finiteness result, we obtain the following in
Section \ref{fingenX}:

\begin{theorem}\label{fg_diff_char}
For any abelian Anderson $A$-module $E$ of rank $r$ and dimension $d$, 
${\bX}_{\infty}(E)$  is  freely generated as an
$R\{\phi^{*}\}$-module by $h$ $\d$-characters of order at most $r-h+1$.
\end{theorem}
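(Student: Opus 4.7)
The plan is to reduce Theorem \ref{fg_diff_char} to the finiteness statement in the preceding theorem, via a Nakayama-style lift-and-descent argument, combined with an order-bound coming from the rank estimate on $\bH(E)$.

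First I would choose an $R$-basis $\bar\Theta_1,\dots,\bar\Theta_d$ of $\bXp(E)=\varinjlim \bX_n(E)/\phi^*(\bX_{n-1}(E)_\phi)$, which is free of rank $d$ by the preceding theorem. Since the quotient is a directed limit, each $\bar\Theta_i$ already appears at some finite level, and I would lift it to an honest $\Theta_i\in\bX_{n_i}(E)$. The next step is to bound $n_i\le r-d+1$ uniformly. For this I would combine the rank bound $\rk_R\bH(E)\le r$ with the short exact sequence (\ref{Xpshort}), which forces $\rk_R\bI(E)\le r-d$; then the exact sequence (\ref{fshort}) together with the description of $\bH(E)$ and $\bI(E)$ in terms of $\Hom_A(N^n,\hG)$ and $\Ext_A(E,\hG)$ shows that the directed system $\bX_n(E)/\phi^*(\bX_{n-1}(E)_\phi)$ has already stabilized by $n=r-d+1$.

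Second, to show that $\Theta_1,\dots,\Theta_d$ generate $\bX_\infty(E)$ as an $R\{\phi^*\}$-module, I would use descent on the order. Given $\Theta\in\bX_n(E)$, its class in $\bX_n(E)/\phi^*(\bX_{n-1}(E)_\phi)$ can be written as an $R$-linear combination $\sum_i r_i\bar\Theta_i$, so $\Theta-\sum_i r_i\Theta_i=\phi^*(\Theta')$ for some $\Theta'\in\bX_{n-1}(E)$. Iterating in $n$ yields a finite $R\{\phi^*\}$-combination of the $\Theta_i$ representing $\Theta$.

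Third, for freeness, suppose a non-trivial relation $\sum_i P_i(\phi^*)\Theta_i=0$ holds with minimal total $\phi^*$-degree. Reducing modulo $\phi^*\bX_\infty(E)$ gives $\sum_i P_i(0)\bar\Theta_i=0$ in $\bXp(E)$, which forces $P_i(0)=0$ for every $i$ since the $\bar\Theta_i$ form an $R$-basis. One may then factor $\phi^*$ out of the relation; the injectivity of $\phi^*$ on $\bX_\infty(E)$ (which follows from the fact that $\phi$ is a lift of Frobenius acting faithfully on the $\pi$-torsion-free $R$-modules $\bX_n(E)$) produces a relation of strictly smaller degree, contradicting minimality.

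The main obstacle I expect is the quantitative order bound $n_i\le r-d+1$; unlike the qualitative finiteness and freeness results, which follow from standard Nakayama and flatness arguments once the preceding theorem is in hand, the sharp order statement requires pinning down exactly when the lateral Frobenius $\fra$ stops contributing new classes in $\Hom_A(N^n,\hG)/i^*\phi^*(\bX_{n-1}(E)_\phi)$, and this in turn rests on a careful comparison of the ranks of $\bXp(E)$ and $\bI(E)$ via the compatibility identity (\ref{comp}).
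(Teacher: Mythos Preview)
Your overall strategy—lift an $R$-basis of $\bXp(E)$ and verify generation and freeness over $R\{\phi^*\}$—matches the paper's approach (Corollary \ref{FinGenX}), and your freeness argument via a minimal relation and injectivity of $\phi^*$ is correct (the paper instead deduces freeness from the $K$-version, Theorem \ref{fg_X_inf_K}).

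There is, however, a genuine gap in your order-bound step. Knowing $\rk_R \bI(E)\le r-d$ (which follows directly from $\bI(E)\subseteq\Ext_A(E,\hG)$ and Theorem \ref{rank_Ext}, without detouring through $\bH(E)$) tells you only that the jumps $h_n:=\rk\mathbf{I}_n(E)-\rk\mathbf{I}_{n-1}(E)$ satisfy $\sum_n h_n\le r-d$. To conclude $h_n=0$ for $n\ge r-d+1$ (equivalently, stabilization by that level) you also need the $h_n$ to be \emph{weakly decreasing}. This monotonicity is not formal: it comes from the rank formula $\rk\bX_n(E)-\rk\bX_{n-1}(E)=l_1+\cdots+l_n$ (using the injectivity of $\phi^*$ on successive quotients, Theorem \ref{frob_bij}) together with $h_n=d-(\rk\bX_n(E)-\rk\bX_{n-1}(E))$; this is Lemma \ref{upper_splitting}. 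Without it, a sequence like $h_1=0$, $h_2=r-d$ would satisfy your hypothesis yet give $m_u>r-d+1$.

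Your descent argument for generation also needs care. Writing the class of $\Theta\in\bX_n(E)$ as $\sum r_i\bar\Theta_i$ in $\bXp(E)$ does \emph{not} a priori give $\Theta-\sum r_i\Theta_i\in\phi^*(\bX_{n-1}(E)_\phi)$, because some $\Theta_i$ may have order $n_i>n$; one only obtains $\Theta'\in\bX_{\max(n,m_u)-1}(E)$, and the iteration can stall at level $m_u-1$ indefinitely. The fix is to choose the lifts $\Theta_i$ compatibly with the order filtration, so that $\{\bar\Theta_i:n_i\le n\}$ generates the image of $\bX_n(E)/\phi^*(\bX_{n-1}(E)_\phi)$ in $\bXp(E)$ for each $n$. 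The paper achieves this via an \emph{ascent} rather than a descent: it introduces level-wise minimal generating sets $\mathbb{D}_i$ of $\bX_i(E)/(u^*\bX_{i-1}(E)+\phi^*(\bX_{i-1}(E)_\phi))$ (Lemma \ref{SnR}), shows $\mathbb{D}_i=\emptyset$ for $i>m_u$ in the proof of Theorem \ref{X_prim}, and then extracts the $d$ generators from $\mathbb{D}_1\cup\cdots\cup\mathbb{D}_{m_u}$.
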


Our method extends to the case of abelian schemes which would imply
a strengthened version of Theorem A of \cite{Buium 1995} by Buium. This is 
proved in \cite{PS-2}.
A Drinfeld module $E$ is said to be 
a \emph{Canonical Lift} (CL) if there is an $A$-linear endomorphism 
$\psi(x)\in \mathrm{Hom}_A(E,E)$ such that $\psi(x)
\equiv x^\hq ~\mathrm{mod}~\pi$ for all  $x \in \hat{\mathbb{G}}_{a}$. 
\begin{theorem}\label{CL_intro}
Let $E$ be a Drinfeld module of rank $r$ over $R$. There exist
 restricted power series $f_{i}$ in $2r+1$  variables over $R$ such that $E$ has CL if and only if
$$ f_{i}(a_{1},a_{2},\ldots ,a_{r},a_{1}',a_{2}',\ldots ,a_{r}',a_{r}^{-1})=0,\ \textit{for all}\ i=1, \dots , r-1$$ 
where $a_1,\dots, a_r$ are the structure coefficients of $E$.
\end{theorem}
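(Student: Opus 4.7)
The plan is to translate the canonical-lift condition into a commutator equation in the twisted power-series ring $R\{\tau\}^{\wedge}$ (the $\pi$-adic completion of $R\{\tau\}$) and then extract the obstructions term by term. Writing $\varphi_{t}=\theta(t)+a_{1}\tau+\cdots+a_{r}\tau^{r}$ and $\psi=\tau^{f}+\pi\chi$ for the unknown lift, with $\chi\in R\{\tau\}^{\wedge}$, the commutation equation $\psi\varphi_{t}=\varphi_{t}\psi$ becomes
$$[\tau^{f},\varphi_{t}]=-\pi\,[\chi,\varphi_{t}],\qquad [x,y]:=xy-yx.$$
Using the relations $\tau^{f}c=c^{\hq}\tau^{f}$, $\phi(\theta(t))=\theta(t)$, and $\phi(a_{j})=a_{j}^{\hq}+\pi\,a_{j}'$ with $a_{j}':=\delta(a_{j})$, a direct expansion gives
$$[\tau^{f},\varphi_{t}]=-\pi\,\delta(\theta(t))\,\tau^{f}+\sum_{j=1}^{r}\bigl[(\phi(a_{j})-a_{j})-\pi a_{j}'\bigr]\tau^{f+j}.$$
The mod $\pi$ reduction of the equation then forces $\phi(a_{j})\equiv a_{j}\pmod{\pi}$ for $j=1,\dots,r$, equivalently $\bar{a}_{j}\in\FF_{\hq}\subset l$; this preliminary obstruction is built into the $f_{i}$.

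Assuming this initial obstruction vanishes, I would write $\phi(a_{j})-a_{j}=\pi\gamma_{j}$ so that the commutation equation takes the shape
$$-\delta(\theta(t))\,\tau^{f}+\sum_{j=1}^{r}(\gamma_{j}-a_{j}')\tau^{f+j}\;=\;[\chi,\varphi_{t}]\quad\text{in}\quad R\{\tau\}^{\wedge}.$$
I would then analyze the image and cokernel of the inner derivation $D_{t}:\chi\mapsto[\chi,\varphi_{t}]$. Since $a_{r}\in R^{\times}$, the right-Euclidean division algorithm in $R\{\tau\}$ relative to $\varphi_{t}$ makes the cokernel of $D_{t}$ a free $R$-module of rank $r$, spanned by shifts of $\tau^{f},\tau^{f+1},\ldots,\tau^{f+r-1}$. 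Projecting the equation onto this cokernel therefore produces $r$ scalar obstructions, naturally expressed in the variables $a_{j},a_{j}',a_{r}^{-1}$ (the $a_{r}^{-1}$ entering through the division algorithm). A short calculation shows that one of these obstructions is automatically zero, via an identity relating $\delta(\theta(t))$ and $\gamma_{r}$ modulo the image of $D_{t}$, leaving exactly $r-1$ independent scalar equations $f_{1},\ldots,f_{r-1}$.

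Finally, upgrading from mod $\pi$ to all orders is a standard inductive lift: if $\psi_{n}$ solves the commutation equation modulo $\pi^{n}$, the obstruction to lifting it to $\pi^{n+1}$ lives in the same cokernel and is once more a polynomial combination of $a_{j},a_{j}',a_{r}^{-1}$ with coefficients determined by the lower-order data. Assembling these level-wise obstructions produces restricted power series $f_{1},\ldots,f_{r-1}$ in the $2r+1$ variables over $R$ whose simultaneous vanishing is equivalent to the existence of the canonical lift $\psi$. The main technical point I expect is the cokernel description: identifying the one automatic linear relation so as to end up with exactly $r-1$, and not $r$, obstructions. A secondary subtlety is the $\pi$-adic convergence of the assembled $f_{i}$ as restricted power series, which follows from the fact that each $\pi$-adic level contributes only finitely many monomials in $(a_{j},a_{j}',a_{r}^{-1})$.
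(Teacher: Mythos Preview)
Your approach is genuinely different from the paper's, and it has a real gap at the cokernel step.

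The central problem is that the commutator map $D_t:\chi\mapsto[\chi,\varphi_t]$ is only $\FF_q$-linear, not $R$-linear. Indeed, for $r\in R$ and $\chi\in R\{\tau\}^{\wedge}$ one has $\varphi_t\cdot(r\chi)\neq r\cdot(\varphi_t\chi)$ because $\tau r=r^{q}\tau$. The right-Euclidean division algorithm you invoke computes the cokernel of \emph{one-sided} multiplication by $\varphi_t$ (an $R$-linear map), and that cokernel is indeed free of rank $r$; but the image of $D_t$ is not an $R$-submodule, so the quotient by it is not an $R$-module at all, and the claim ``cokernel free of rank $r$ with one automatic relation'' has no clear meaning. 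Consequently the mechanism that is supposed to yield exactly $r-1$ obstructions breaks down, and the subsequent inductive lifting argument (which also relies on the same cokernel at every $\pi$-adic level) is left without a foundation. A symptom: already your mod-$\pi$ step imposes the $r$ conditions $\bar a_j\in\FF_{\hq}$, and you give no argument reducing these to $r-1$ equations.

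The paper sidesteps this entirely. It uses the jet-space criterion (Theorem~8.3 of \cite{BS b}) that $E$ has CL if and only if $\bX_1(E)\neq 0$, equivalently if and only if the class $\partial(\Psi_1)$ vanishes in $\mathrm{Ext}_A(E,\hat{\mathbb{G}}_a)$. This Ext group is $D(\varphi_E)/D_i(\varphi_E)$, where the inner biderivations are $m\mapsto \pi m-m\varphi_E(z)$; this map \emph{is} $R$-linear, and the quotient is a free $R$-module of rank $r-1$ with basis $[\tau],\dots,[\tau^{r-1}]$. The paper then computes $\partial(\Psi_1)=[\nu_1\circ\eta_{J^1E}]$ directly: the biderivation $\eta_{J^1E}$ has coefficients $\pi',a_1',\dots,a_r'$, and the coefficients $b_i$ of $\nu_1$ are recursively built from $\pi^{q^j-1}\phi(a_j)=\pi^{q^j-1}(a_j^{\hq}+\pi a_j')$ with $v(b_i)\geq i$. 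Expanding in the basis (using $a_r^{-1}$) immediately gives $r-1$ restricted power series in the $2r+1$ variables. No iterative $\pi$-adic lift is needed: the whole obstruction lives at jet level one.
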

The above implies that
the $\mathrm{CL}$ points form a `$\delta$-closed subset'
 inside the Drinfeld modular scheme for Drinfeld modules of rank $r$.
In particular, when $E$ is a Drinfeld module of rank $2$, the delta function 
$f_{1}$ is the analogue of the $\d$-modular form $f_{\mathrm{jet}}$ 
constructed in Section 4 of \cite{Buium 2000} by Buium. 
This will be addressed in detail in a subsequent paper.

{\bf Acknowledgements.} 
We wish to thank the anonymous referee for carefully reading our article and for
the suggestions which led to deeper clarifications and enrichment of this paper.
The paper has greatly benefitted from enlightening discussions that took 
place between Jim Borger and the second author.
The second author was partially supported by the SERB grant 
SRG/2020/002248.

\section{Anderson Modules }
\label{AndMod}


Let $R$ be a flat $\hat{A}$-algebra (that is $\pi$-torsion free) which is a 
$\mathfrak{p}$-adically complete discrete valuation ring. Hence we have the injection of rings
 $$ \theta :A\longrightarrow \hat{A} \longrightarrow R$$ 
and $\theta$ is called the characteristic map.
Let us also fix an $\hat{A}$-algebra endomorphism $\phi:R\longrightarrow R$ with $\phi(x)\equiv x^{\hq}\mod(\mathfrak{p}R)$ for $x\in R$. The affine formal scheme Spf $R$ is denoted by $S$. Let $K$ be the field of fraction of $R$ and $l$ be the residue field. Then by the Cohen structure theorem we have $R\simeq l[[z]]$ and $K\simeq l((z))$. 
Given any $R$-module $M$ we denote  the $K$-vector space $M\otimes _{R}K$  by 
$M_{K}$.

\begin{definition} An \emph{$A$-module scheme}  is a pair $(E,\varphi_{E})$ consisting of a smooth $\pi$-formal group scheme $E$ over $S$ and a ring homomorphism $$\varphi_{E}:A\longrightarrow \mathrm{End}_{R}(E).$$ 
\end{definition}

 Throughout this article every $A$-module scheme is considered over $S$ unless it is specified.
Given two  $A$-module schemes $G$ and $G^{'}$  we denote the set of all $\mathbb{F}_{q}$-linear morphisms of $S$-group schemes from $G$ to $G^{'}$ by $\mathrm{Hom}_{R,\mathbb{F}_{q}}(G,G^{'})$. \medskip

Let $E$ be an $A$-module scheme. We will denote ${\bX}(E):=\mathrm{Hom}_{A}(E,\hat{\mathbb{G}}_{a})$, the group of all $A$-linear $\pi$-formal group scheme 
morphisms from $E$ to $\hat{\mathbb{G}}_{a}$, where $\hat{\mathbb{G}}_{a}$ is the $A$-module scheme with $A$-action induced from the characteristic map.
For every positive integer $n$, we define the arithmetic jet space $J^{n}E$ of 
$E$ (as defined in the Introduction) by the functor of points as 
$$J^{n}E(B):=E(W_{n}(B))$$
for any $\pi$-adically complete $R$-algebra $B$. In fact $J^{n}E$ is representable by an $A$-module scheme over $S$. \cblue For a detailed discussion on the positive characteristic jet spaces we refer the reader to section $4$A in \cite{BS b}.  \color{black} 
Then we have the following short exact sequence of $A$-module schemes
\begin{align}
\label{fshort1}
0 \map N^n \stk{i}{\map} J^nE \stk{u}{\map} E \map 0 
\end{align}
where $u:J^nE \map E$ is the natural projection map and $N^n$ is the 
kernel of $u$. We would like to remark that even if $E$ is an Anderson module,
$J^nE$ is not one.

The set of all elements of ${\bX}(J^{n}E)$ are called the \emph{$\d$-character of order $n$} of $E$. For the sake of convenience we will 
denote ${\bX}(J^{n}E)$ as ${\bX}_{n}(E)$ for the rest of the article. Given any $A$-module scheme $E$ consider the group $M(E):=\mathrm{Hom}_{R,\mathbb{F}_{q}}(E, \hat{\mathbb{G}}_{a})$ of $\mathbb{F}_{q}$-linear maps from $E$ to $\hat{\mathbb{G}}_{a}$ over $S$. Then $M(E)$ has a natural left action by $A$ and a right action by $R$ given by  
$$(a\otimes b)m:=b\circ m\circ \varphi_{E}(a)\quad \mathrm{for \ any \ a\otimes b \in A\otimes_{\mathbb{F}_{q}}R},$$
and the above action is given by the following composition
$$ E \xrightarrow{\varphi_{E}(a)}E\xrightarrow{m} \hat{\mathbb{G}}_{a}\xrightarrow{b}\hat{\mathbb{G}}_{a}.$$ The $A\otimes_{\mathbb{F}_{q}}R$-module $M(E)$ is called the  $A$-\emph{motive} associated to $E$.  
\cblue
Given an $A$-module scheme $E$, note that it naturally endows an $A$-module
structure on $\Lie (E)$, denoted by $\vp_{\Lie (E)}$, and is given by the 
derivative map $D\vp_E(a)$ for all $a \in A$, that is 
$$
\vp_{\Lie (E)}(a)(y) := D\vp_E(a)(y)
$$
for all $a \in A$ and $y \in \Lie (E)$.

\color{black}

\begin{definition}  
\label{abAnddef}
An abelian Anderson $A$-module of rank $r$ and dimension $d$ over $S$ is an affine $A$-module scheme  $(E,\varphi_{E})$ over  $S$  of relative dimension $d$ such that 
\begin{itemize}

\item[(i)] $E$ is isomorphic to $\hat{\mathbb{G}}_{a}^{d}$ as $\mathbb{F}_{q}$-modules
\item[(ii)] $(\varphi_{\mathrm{Lie}E}(a)-\theta(a))^{d}=0$, for all $a \in A$
\item[(iii)] The associated $A$-motive $M(E)$ is a locally free $A\otimes_{\mathbb{F}_{q}}R$ module of rank $r$.
\end{itemize}
\end{definition}

We say $E$ is an \emph{admissible} $A$-module of dimension $d$ if it satisfies only (i) and (ii) above.

\section{Characters of Anderson modules}

For each $n \geq 0$ and $B$ any $R$-algebra, let $B^{\phi^n}$ be the $R$-algebra
with structure map $R \stk{\phi^n}{\map} R \map B$. We define the {\it ghost
rings} to be the $R$-algebra given by the product ring $\prod_\phi^n B =
B \times B^\phi \times \cdots B^{\phi^n}$ and $\prod_\phi^\infty B = B \times
B^\phi \times \cdots $. Now as sets define 
$$
W_n(B) = B^{n+1}
$$
and define the set map $w:W_n(B) \map \prod_\phi^n B$ by $w(x_0,\dots, x_n)
= (w_0,\dots, w_n)$ where 
\begin{align}
w_i = x_0^{q^i} + \pi x_1^{q^{i-1}}+ \cdots + \pi^i x_i
\end{align}
are the Witt polynomials. The map $w$ is called the {\it ghost} map.
We define the ring of truncated $\pi$-typical Witt vectors by the following 
theorem as for example in \cite{hessl05}, page $141$:
\begin{theorem}
For each $n\geq 0$, there exists a unique functorial $R$-algebra structure on 
$W_n(B)$ such that $w$ becomes a natural transformation of functors of 
$R$-algebras.
\end{theorem}

Let $(E,\varphi_{E})$ be an admissible $A$-module of dimension $d$ over $S$ and
 $$\varphi_{E}(z)=\displaystyle{\sum_{i=0}^{s}}A_{i}{\tau}^{i}$$
 where $A_{0}=\pi I_{d}+V$ for a nilpotent matrix $V$ of order $d$.
Consider the following exact sequence of $A$-modules
\begin{equation}\label{exact}
0\rightarrow N^{n}\stk{i}{\rightarrow} J^{n}E\rightarrow E\rightarrow 0,
\end{equation}  Note that the $A$-module structure on $N^{n}$ is such that the above  inclusion map $i$ is $A$-linear. 

\subsection{The $A$-linear action of $J^{1}E$ and $N^{1}$} \cblue 
Let $X$ be a $\pi$-formal scheme over $S$. Following the notation in page $807$ in \cite{BS b}, we define $X^{\phi^{n}}$ by $X^{\phi^{n}}(B):=X(B^{\phi^{n}})$ for any $R$-algebra $B$. Then we define
$$\prod^{n}_{\phi}X=X\times_{S}X^{\phi}\times_{S}\ldots \times_{S}X^{\phi^{n}}.$$ 
For any $R$-algebra $B$, consider  the ghost map $w: W_n(B) \map 
\prod^{n}_{i=0} B^{\phi^i}$. 
Hence applying the functor of points $X(~-~)$ 
to the above,
we obtain the following morphism of $\pi$-formal $S$-schemes (which by abuse of
notation is still denoted by $w$)
\begin{align}
\label{ghostX}
w:J^{n}X\longrightarrow \prod^{n}_{\phi}X. 
\end{align}
Suppose $E$ is an admissible $A$-module of dimension $d$ over $S$. Then
for the underlying $\pi$-formal group scheme, we have $E \simeq \hG^d$.
Hence we have the isomorphism $J^nE \simeq W_n^d$ as $\pi$-formal group 
schemes where $W_n$ is the
affine $(n+1)$-space $\hat{\bb{A}}^{n+1}$ with the group structure of the 
additive $\pi$-typical Witt vectors of length $n+1$.
Therefore for any $\pi$-adically complete $R$-algebra $B$ we have $J^nE(B) 
\simeq W_n(B)^d$ and $\prod^n_{\phi} E(B) \simeq \prod^n_{i=0} B^{\phi^i}$.

Hence combining the above with equation (\ref{ghostX}) and taking $n=1$ we 
obtain


\color{black}

$${
\xymatrix{
\left(W_{1}(B)\right)^d\ar[d]_{\varphi_{J^{1}E}(z)} \ar[r]^{w} &B^d\times \left(B^{\phi}\right)^d\ar[d]^{\varphi_{E}(z)\times \varphi_{E^{\phi}}(z)}  \\
\left(W_{1}(B)\right)^d \ar[r]^{w} &B^d\times \left(B^{\phi}\right)^d }}
$$
where $w$ is the ghost map given by $w(\mathbf{x}_{0},\mathbf{x}_{1})=(\mathbf{x}_{0},\mathbf{x}_{0}^{\hq}+\pi \mathbf{x}_{1})$ for $\mathbf{x}_{0},\mathbf{x}_{1}\in B^d$.
Let $\varphi_{J^{1}E}(z)(\mathbf{x}_{0},\mathbf{x}_{1})=(\mathbf{z}_{0},\mathbf{z}_{1})$. Also on the ghost side we have  $$\left(\varphi_{E}(z)\times \varphi_{E^{\phi}}(z)\right)(\mathbf{b}_{0},\mathbf{b}_{1})=\left(\displaystyle{\sum_{i=0}^{s}} A_{i}\mathbf{b}_{0}^{(q^{i})},\displaystyle{\sum_{i=0}^{s}} A^{\phi}_{i}\mathbf{b}_{1}^{(q^{i})}\right)$$
where $\mathbf{b}_{0},\mathbf{b}_{1}\in B^d $, $M^{\phi}=\left(\phi(M_{ij})\right)$, and $M^{(q)}=\left(M^{q}_{ij}\right)$ for any matrix $M=(M_{ij})\in\mathrm{Mat}_{m\times n}(R)$. Then by the commutativity of the diagram we have  $$w(\mathbf{z}_{0},\mathbf{z}_{1})= \left(\varphi_{E}(z)\times \varphi_{E^{\phi}}(z)\right)(\mathbf{x}_{0},\mathbf{x}_{0}^{\hq}+\pi \mathbf{x}_{1}).$$
Therefore $\mathbf{z}_{0}=\displaystyle{\sum_{i=0}^{s}} A_{i}\mathbf{x}_{0}^{(q^{i})}=\varphi_{E}(z)(\mathbf{x}_{0})$ and 

\begin{eqnarray}
\nonumber \mathbf{z}_{0}^{\hq}+\pi \mathbf{z}_{1}&=& \displaystyle{\sum_{i=0}^{s}} A_{i}^{\phi}(\mathbf{x}_{0}^{\hq}+\pi \mathbf{x}_{1})^{(q^{i})}\\
\nonumber \pi \mathbf{z}_{1}&=& \displaystyle{\sum_{i=0}^{s}} (A_{i}^{\phi}-A_{i}^{(\hq)})\mathbf{x}_{0}^{(q^{i+f})}+\displaystyle{\sum_{i=0}^{s}} \pi^{q^{i}}A_{i}^{\phi}\mathbf{x}_{1}^{(q^{i})}\\
\label{Act_N^1}
 \mathbf{z}_{1}&=&\displaystyle{\sum_{i=0}^{s}} A_{i}'\mathbf{x}_{0}^{(q^{i+f})}+\displaystyle{\sum_{i=0}^{s}} \pi^{q^{i}-1}A_{i}^{\phi}\mathbf{x}_{1}^{(q^{i})}
\end{eqnarray}
\cblue where $A_{i}'=\left(a_{lk}'\right)=\left(\delta(a_{lk})\right)$ for any matrix $A_{i}=(a_{lk})\in\mathrm{Mat}_{d}(R)$.  \color{black}
Since $f\geq 1$, by the above calculation we obtain 
\begin{align}\label{action-J^1}
\varphi(J^{1}E)(z)=(\pi I_{2d}+\mathbf{V})\tau^{0}+\mathrm{higher~ order~ terms}
\end{align}
where $\mathbf{V}:= 
\begin{pmatrix} 
V & 0 \\ 
0 & V^{\phi} 
\end{pmatrix} $ is a nilpotent matrix of order $2d$.

\begin{corollary}\label{admsbl-J^n} Let $(E,\varphi_{E})$ be an admissible $A$-module scheme of dimension $d$ then $(J^{n}E,\varphi_{J^{n}E})$ is also an admissible $A$-module scheme of dimension $nd$.
\end{corollary}
 \begin{proof}
 Note that if $E\simeq \hG^{d}$ as $\bF_{q}$ modules then $J^{n}E\simeq \hG^{(n+1)d}$ as $\bF_{q}$ modules. Then for $n=1$ equation \eqref{action-J^1} implies that $J^{1}E$ is admissible. A similar computation will show that $J^{n}E$ is admissible for every $n\geq 0$.
 \end{proof}
\begin{lemma}\label{action_N^{1}} The $A$-module structure of $N^{1}$ induced from $(E,\varphi_{E})$ is given by $$\varphi_{N^{1}}(z)=\displaystyle{\sum_{i=0}^{sf}}\pi^{q^{i}-1}A_{i}^{\phi}{\tau}^{i},$$ where $A_{i}^{\phi}=\left(\phi(a_{lk})\right)$ for any matrix $A_{i}=(a_{lk})\in\mathrm{Mat}_{d}(R)$.
\end{lemma}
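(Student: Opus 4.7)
The plan is to deduce the lemma as an immediate specialization of the computation derived just above its statement. Recall that $N^1$ is defined as the kernel of the natural projection $u:J^1E \map E$ in \eqref{exact}. On $B$-points, $J^1E(B) = E(W_1(B))$ and, under the Witt coordinate description, a point of $J^1E(B)$ is a pair $(\mathbf{x}_0,\mathbf{x}_1) \in B^d \times B^d$; the projection $u$ sends $(\mathbf{x}_0,\mathbf{x}_1)$ to $\mathbf{x}_0$. Consequently $N^1(B)$ is cut out by $\mathbf{x}_0=0$, so as an $\FF_q$-module $\pi$-formal scheme $N^1 \cong \hG^d$, with coordinate the second Witt component $\mathbf{x}_1$.

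Next I would substitute $\mathbf{x}_0=0$ into the already-derived formula \eqref{Act_N^1} for $\varphi_{J^1E}(t)(\mathbf{x}_0,\mathbf{x}_1) = (\mathbf{z}_0,\mathbf{z}_1)$. The first component gives $\mathbf{z}_0 = \varphi_E(t)(0)=0$, which is the sanity check that $\varphi_{J^1E}(t)$ preserves $N^1$ (as it must, since the inclusion $i$ is $A$-linear and hence its kernel is $A$-stable). The second component collapses, since every term involving $\mathbf{x}_0$ vanishes, to
\[
\mathbf{z}_1 \;=\; \sum_{i=0}^{s} \pi^{q^i-1} A_i^{\phi}\, \mathbf{x}_1^{(q^i)}.
\]

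Finally I would reinterpret this formula inside $\Hom_{R,\FF_q}(\hG^d,\hG^d)\cong \mathrm{Mat}_{d\times d}(R)\{\tau\hat{\}}$ via Lemma \ref{hartle_lemma}: reading off coefficients in $\tau$, the induced $A$-action on $N^1$ in the coordinate $\mathbf{x}_1$ is precisely
\[
\varphi_{N^1}(t) \;=\; \sum_{i=0}^{s} \pi^{q^i-1} A_i^{\phi}\, \tau^i,
\]
which is the claimed identity.

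There is essentially no obstacle here beyond bookkeeping: the only mild subtlety to flag is the identification of $N^1$ with $\hG^d$ via the second Witt component (which relies on the ghost-coordinate splitting of $W_1$ as a $\pi$-formal scheme, not as a ring), and the observation that the $A$-action on $N^1$ was defined so as to make $i$ $A$-linear, so restricting $\varphi_{J^1E}(t)$ along $\mathbf{x}_0=0$ legitimately computes $\varphi_{N^1}(t)$.
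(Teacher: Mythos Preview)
Your proposal is correct and follows exactly the paper's approach: the paper's proof is the one-line observation that substituting $\mathbf{x}_0=0$ into equation \eqref{Act_N^1} yields the result. Your version simply adds the bookkeeping justification (identifying $N^1$ with the locus $\mathbf{x}_0=0$ and interpreting the output via Lemma~\ref{hartle_lemma}) that the paper leaves implicit.
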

\proof  Substituting $\mathbf{x}_{0}=0$ in equation \eqref{Act_N^1} we get the result.        \qed

\begin{corollary} Let $(E,\varphi_{E})$ be an admissible $A$-module scheme of dimension $d$ then $(N^{n}E,\varphi_{N^{n}E})$ is also an admissible $A$-module scheme of dimension $nd$.
\end{corollary}
\begin{proof}
It is clear from  Lemma \ref{action_N^{1}} that the assertion is true for $n=1$. For $n\geq 2$, combining the fact $N^{n}\simeq J^{n-1}N^{1}$ as $A$-module scheme (cf. Theorem 1.2 in \cite{PS-2}) and Corollary \ref{admsbl-J^n}, we obtain our result.
\end{proof}
\bigskip

 Let $E$ be an admissible $A$-module of dimension $d$ over $S$ given by the $A$-action as 
 $$\varphi_{E}(z)=\displaystyle{\sum_{i=0}^{s}}A_{i}{\tau}^{i}$$
 where $A_{0}=\pi I+V(z)$. Note that given an  $\varphi_{E}(ab)=\varphi_{E}(a)\circ \varphi_{E}(b)$. Therefore we obtain
\begin{align}\label{V-id}
\nonumber \theta(ab)I+V(ab)&=(\theta(a)I+V_{a})(\theta(b)I+V(b))\\
V(ab)&=\theta(a)V(b)+\theta(b)V(a)+V(a)V(b)
\end{align}
This in particular shows that $U \circ V(z)=0$ implies $U \circ V(g)=0$ for all $g\in \FF_{q}[z]$, where $U$ is an $R$-linear map. Define 
\begin{align*}
	V^{\perp}:=\{u\in \mathrm{Mat}_{1\times d}(R)~| uV(z)=0\}
\end{align*}
Since $R$ is a discrete valuation ring, $V^{\perp}$ is a free submodule of $R^{d}$ of rank $d-\mathrm{rank}(V(z))$, which we denote as $h$ for the rest of the article.	
Note that $E$ is admissible $A$-module implies $\Lie (E)$ is also admissible as the $A$-action given by $\varphi_{\Lie (E)}(z)=A_{0}{\tau}^{0}.$ \cblue 
Let $v$ denote the $\pi$-adic valuation on $R$. Given any $M=(m_{ij})\in \mathrm{Mat}_{d\times d^{'}}(R)$  the $\pi$-adic valuation of $M$ is defined as $$v(M):=\mathrm{min}\{v(m_{ij}), \ \mathrm{for \ all} \ 1\leq i\leq d,1\leq j\leq d' \}.$$\color{black}

\begin{theorem}\label{inj-lie} Let $E$ be an admissible $A$-module of dimension $d$ over $R$. Then the derivative map $D:\bX(E)\mapsto \bX(\Lie(E))$ is injective.
\end{theorem}
\begin{proof}
Let $\Theta\in \bX(E)$  be an $A$-linear map, and the $A$-linear action of $z$ is given by  $$\varphi_{E}(z)=\displaystyle{\sum_{i=0}^{s}}A_{i}{\tau}^{i}$$
where $z$ is the fixed element of $A$ as introduced before in the beginning of  section $2$.
Since $\FF_{q}[z]\subset A$, without loss of generality, we may assume $A=\FF_{q}[z]$. Let $$\Theta=\displaystyle{\sum_{i=0}^{\infty}}B_{i}{\tau}^{i}$$ where $B_{i}\in \mathrm{Mat}_{1\times d}(R)$ such that $v(B_{i})\rightarrow \infty$ as $i\rightarrow \infty$. As $\Theta$ is $A$-linear, we have the following relation
\begin{align*}
\nonumber \Theta \circ \varphi_{E}(z)&=\varphi_{\hG}(z)\circ \Theta\\
\nonumber\left(\displaystyle{\sum_{i=0}^{\infty}}B_{i}{\tau}^{i}\right)\circ \left(\displaystyle{\sum_{j=0}^{s}}A_{j}{\tau}^{j}\right)&=\pi \left(\displaystyle{\sum_{k=0}^{\infty}}B_{k}{\tau}^{k}\right)\\
\nonumber\displaystyle{\sum_{k}}\left(\displaystyle{\sum_{i+j=k}}B_{j} A_{i}^{(q^{j})}\right){\tau}^{k}&=\displaystyle{\sum_{k}}\pi B_{k}{\tau}^{k}\\
\end{align*}
where $M^{(q)}=\left(M^{q}_{ij}\right)$ for any matrix $M=(M_{ij})\in\mathrm{Mat}_{m\times n}(R)$.
Hence by comparing coefficients of $\tau^k$ we for all $k$,
\begin{align}\label{co-ca}
\displaystyle{\sum_{i=0}^{k}} B_{k-i} A_{i}^{(q^{k-i})}&= \pi B_{k}.
\end{align}

Note that the derivative is given by $D\Theta=B_{0}\tau^{0}$. We will show that $B_{i}$'s are uniquely determined by $B_{0}$ and hence proving $\Theta$ is completely determined by $B_{0}$. We know by the definition of $E$ that $A_{0}=\pi I+V$, where $V$ is a nilpotent matrix of order $d$. We will prove by induction and hence assume that all the $B_{i}$'s, for $0\leq i\leq k-1$ are determined by $B_{0}$. Then from equation \eqref{co-ca}, we have
\begin{align}\label{co-ca-2}
\nonumber \pi B_{k}-B_{k}A_{0}^{(q^{k})}&= \displaystyle{\sum_{i=1}^{k}} B_{k-i} A_{i}^{(q^{k-i})}\\
\nonumber B_{k}(\pi I-A_{0}^{(q^{k})})&= \displaystyle{\sum_{i=1}^{k}} B_{k-i} A_{i}^{(q^{k-i})}\\
B_{k}(cI-V_{0})&=\displaystyle{\sum_{i=1}^{k}} B_{k-i} A_{i}^{(q^{k-i})}
\end{align}
where $c=\pi-\pi^{q^{k}}$ and $V_{0}=V^{(q^{k})}$, which is also another nilpotent matrix. Note that $$c^{d}I=c^{d}I-V_{0}^{d}=(cI-V_{0})V_{1}$$ where $V_{1}=\displaystyle{\sum_{i=0}^{d-1}}c^{d-1-i}V_{0}^{i}$. Multiplying by $V_{1}$ from the right on both sides in equation \eqref{co-ca-2}, we obtain
\begin{align}\label{co-ca-3}
c^{d} B_{k}&=\left(\displaystyle{\sum_{i=1}^{k}} B_{k-i} A_{i}^{(q^{k-i})}\right)V_{1}
\end{align}
Since $c$ is a nonzero element and $R$ is integral domain, $B_{k}$ is uniquely determined in terms of $B_{i}$, for $0\leq i\leq k-1$, hence proving the inductive step and we are done. 
\end{proof}

\begin{lemma}\label{lemma-V-op}Let $B\subset A$ be an $\FF_{q}$-sub-algebra. Assume that $U$ be an $R$-linear map satisfying $U\circ V(b)=0$, for all $b\in B$. Then given a polynomial $P(t)\in B[t]$ we have a polynomial $Q(t)\in R[t]$ with $Q(0)=0$, such that for any $a\in A$ 
 \begin{align*}
U\circ V(P(a))=U\circ (V(a)(P'(\theta (a))I+ Q(V(a))).
\end{align*}
\end{lemma}
\begin{proof}
Note that the operator $V$ is additive. Therefore it is enough to prove the statement for $P(t)=bt^{n}$. Recall from \eqref{V-id} that 
$$V(ba^{n})=\theta(b)V(a^{n})+\theta(a^{n})V(b)+V(b)V(a^{n}).$$
Applying $U$ on both sides of the above equation we get $U\circ V(ba^{n})=U\circ \theta(b)V(a^{n})$. Therefore it is reduced to prove the statement for $P(t)=t^{n}$, which we will do by induction on $n$. Note that the assumption is true for $n=1$ and we assume that $V(a^{n-1})=(n-1)\theta(a)^{n-2}V(a)+Q_{n-1}(V(a))$. We have 
\begin{align*}
V(a^{n})&=\theta(a)V(a^{n-1})+\theta(a^{n-1})V(a)+V(a)V(a^{n-1})\\
        &=\theta(a)((n-1)\theta(a)^{n-2}V(a)+Q_{n-1}(V(a)))+\theta(a^{n-1})V(a)\\
        &+V(a)((n-1)\theta(a)^{n-2}+Q_{n-1}(V(a)))\\
        &=V(a)(n\theta(a)^{n-1})+ Q_{n}(V(a)). 
\end{align*}
This completes the proof.
\end{proof}

\begin{lemma}\label{ext-A-lin} Let $E$ be an admissible $A$-module of dimension $d$ over $R$. If $\Theta\in M(E)$ is $\FF_{q}[z]$-linear then $\Theta$ is $A$-linear.
\end{lemma}
\begin{proof}
Let $\Theta=\displaystyle{\sum_{i=0}^{\infty}}B_{i}{\tau}^{i}$, where $B_{i}\in \mathrm{Mat}_{1\times d}(R)$ such that $v(B_{i})\rightarrow \infty$ as $i\rightarrow \infty$. Then given any $a\in A$ consider the two $\FF_{q}[z]$-linear maps $\Theta\circ \varphi_{E}(a)$ and $\varphi_{\hG}(a)\circ \Theta$. Applying the derivative map on $\Theta\circ \varphi_{E}(a)$ and $\varphi_{\hG}(a)\circ \Theta$, and equating them we get
\begin{align*}
D\Theta\circ (\theta(a)I+V(a))&=\theta(a)D\Theta\\
\theta(a)D\Theta +D\Theta\circ V(a)&=\theta(a)D\Theta\\
D\Theta\circ V(a)&=0.
\end{align*} 
As $\Theta$ is $\FF_{q}[z]$-linear we have $D\Theta \circ V(g)=0$ for all $g\in \FF_{q}[z]$. As $A$ is separable integral extension over $\FF_{q}[z]$, for an $a\in A$, there is a separable polynomial $P(t)\in \FF_{q}[z][t]$ such that $P(a)=0$.  
Then Lemma \ref{lemma-V-op} implies that
\begin{align*}
D\Theta\circ V(P(a))=D\Theta \circ (V(a)(P'(\theta (a))I+ Q(V(a)))=0.
\end{align*}
where $Q(t)$ is a polynomial in $R[t]$ with $Q(0)=0$. As $\theta$ is injective and $P(t)$ is separable, we have $P'(\theta(a))$ is nonzero. Also $V(a)$ nilpotent implies that $Q(V(a))$ is nilpotent. Hence $P'(\theta (a))I+ Q(V(a))$ is an invertible matrix over $K$ that implies $D\Theta \circ V(a)$=0. This shows that the derivative of the two maps $\Theta\circ \varphi_{E}(a)$ and $\varphi_{\hG}(a)\circ \Theta$ agree in $M(\Lie(E))$ for all $a\in A$. Then applying Theorem \ref{inj-lie} for $A=\FF_{q}[z]$, we have $\Theta\circ \varphi_{E}(a)=\varphi_{\hG}(a)\circ \Theta.$ Hence $\Theta$ is $A$-linear. 
\end{proof}

\cblue
\begin{lemma}\label{esti} Let $r_{0}$ be the smallest number such that $q^{r_{0}}>d$. Let $L_{0}\geq r_{0}d$ be an integer. Suppose for each positive integer $k$,  there is an integer  $L_{k}$ satisfying $$L_{k}\geq \min~\{L_{k-i}+(q^{i}-1)q^{k-i}-d, ~\mathrm{for}~i=1,\ldots k\}.$$ Then $L_{k}\geq 0$ and $L_{k}\rightarrow \infty$ as $k\rightarrow \infty$.   
\end{lemma}\color{black}
\begin{proof}
We will first prove that for each $k$, $L_{k}\geq 0$.

\noindent \textbf{\underline{Case-I}:} Let $k\leq r_{0}$. We claim that $L_{k}\geq (r_{0}-k)d$, and we will show it by induction. By our assumption $L_{0}\geq (r_{0}-0)d=r_{0}d$. Assume that the statement holds for numbers $j<k$. Then for $i=1,\ldots k$, we have 

\begin{align*}
L_{k-i}+(q^{i}-1)q^{k-i}-d &\geq L_{k-i}-d\\
&\geq (r_{0}-(k-i))d-d\\
&\geq (r_{0}-k+i-1)d\\
&\geq (r_{0}-k)d\\
&\geq 0.
\end{align*}
  
\noindent\textbf{\underline{Case-II}:} Let $k> r_{0}$. Then by our choice of $r_{0}$, we have $q^{r_{0}}>d$. Hence for $i=1,\ldots k$, we have 
\begin{align*}
L_{k-i}+(q^{i}-1)q^{k-i}-d&\geq (q^{k}-q^{k-i})-d\\
&\geq (q^{k}-q^{k-1})-d\\
&= q^{k-1}(q-1)-d\\
&\geq q^{r_{0}}-d\\
&\geq  0.
\end{align*}
Note that for $k>r_{0}$, the above estimate gives us
\begin{align*}
L_{k-i}+(q^{i}-1)q^{k-i}-d\geq q^{k-1}(q-1)-d,~ \mathrm{for~ all}~ i=1,\ldots k.
\end{align*}
Hence 
\begin{align*}
L_{k}&=\min \{L_{k-i}+(q^{i}-1)q^{k-i}-d ~|~ i=1,\ldots k\}\\
&\geq q^{k-1}(q-1)-d.
\end{align*}
Therefore $L_{k}\rightarrow \infty$ as $k\rightarrow \infty$ and that completes the proof.
\end{proof}

\begin{theorem}\label{char-N^1} Let $E$ be an admissible $A$-module of dimension $d$ over $R$. Let $r_{0}$ be the smallest number such that $q^{r_{0}}>d$. If $B_{0}\in \pi^{r_{0}d}\bX(\Lie(E))$, then there exists a unique $A$-linear map $\nu_{1}:N^{1}\mapsto \hG$ such that $D\nu_{1}=B_{0}$.
\end{theorem}
\begin{proof}
By Lemma \ref{ext-A-lin}, it is enough to show the map $\nu_1$ is $\FF_{q}[z]$-linear. 
Using Lemma \ref{action_N^{1}} and a similar calculation as in \eqref{co-ca-3}, we have 
$$c^{d} B_{k}=\left(\displaystyle{\sum_{i=1}^{k}} B_{k-i} (\pi^{q^{i}-1}{A^{\phi}}_{i})^{(q^{k-i})}\right)V_{1}$$
Therefore computing the order with respect to $\pi$ on both sides of the above equation, we get 
\begin{align*}
\nonumber v(B_{k})+d&\geq \min\left\{v\left(\pi^{(q^{i}-1)(q^{k-i})}B_{k-i}{A^{\phi}}_{i}^{(q^{k-i})}\right)~|~ 1\leq i\leq k\right\} \\
v(B_{k})&\geq \min\{v(B_{k-i})+(q^{i}-1)(q^{k-i})-d ~|~ 1\leq i\leq k\}
\end{align*}
Since $v(B_{0})\geq r_{0}d$, by Lemma \ref{esti}, we have $v(B_{k})\geq 0$ and $v(B_{k})\rightarrow \infty$ as $k\rightarrow \infty$. This allow us to define $B_{k}$ as above. Therefore the $A$-linear map $\nu_{1}=\displaystyle{\sum_{i=0}^{\infty}}B_{j}\tau^{j}$ exists and Theorem \ref{inj-lie} shows it is unique.
\end{proof}

\begin{proposition}\label{rank-lie} Let $E$ be an abelian Anderson $A$-module of dimension $d$ and rank $r$ over $R$. Then $\bX(\Lie E)\simeq V^{\perp}$. In particular, $\mathrm{rank}_{R}\bX(\Lie E)=h$.
\end{proposition}
\begin{proof}
Let $\displaystyle{\sum_{i\geq 0}} B_{i}\tau^{i}\in \bX(\Lie E)$, where $B_{i}\in \mathrm{Mat}_{1\times d}(R)$. Then from equation \eqref{co-ca} we have

$$\displaystyle{\sum_{i=0}^{k}} B_{k-i} A_{i}^{(q^{k-i})}= \pi B_{k}.$$
For $k=0$, we get $B_{0}V=0$. Hence $B_{0}\in V^{\perp}$. 
When $k>0$, we have 
\begin{align*}
B_{k}A_{0}^{(q^k)}=\pi B_{k} \\
B_{k}((\pi^k -\pi)I + V^{(q^k)})=0
\end{align*}
Note that the matrix $V$ is nilpotent implies $V^{(q^k)}$ is again a nilpotent matrix. Hence for $k>0$, the matrix $(\pi^k -\pi)I + V^{(q^k)}$ is invertible in $K$. Therefore $B_{k}=0$ for $k>0$. Hence  by identifying $B_{0}\tau^{0}\mapsto B_{0}$, we conclude that $\bX(\Lie E)\subset V^{\perp}$.

Conversely, any element $B_{0}\in V^{\perp}$ will satisfy \eqref{co-ca} for $k=0$. Therefore $B_{0}\tau^{0}\in \bX(\Lie E)$. Thus $B_{0}\tau^{0}\mapsto B_{0}$ gives the required identification.
\end{proof}

\begin{corollary}\label{rank-N^1} Let $E$ be an abelian Anderson $A$-module of dimension $d$ and rank $r$ over $R$. Then the rank of the $R$-module $\bX(N^{1})$ is $h$.
\end{corollary}

\begin{proof} Note that by Lemma \ref{action_N^{1}}, we have $\Lie N^{1}\simeq \Lie E$ as $A$-module schemes. Also by \eqref{action_N^{1}}, we observe that the admissible $A$-module scheme $N^{1}$ satisfies the hypothesis of Theorem \ref{char-N^1}.
Therefore from Theorem \ref{inj-lie} and Theorem \ref{char-N^1} it follows that $$\pi^{r_{0}d}\bX(\Lie N^{1})\subset \bX(N^{1})\subset \bX(\Lie(N^{1}).$$
Therefore, invoking upon Proposition \ref{rank-lie}, we have $$\mathrm{rank}_{R}\bX(N^{1})=\mathrm{rank}_{R}\bX(\Lie N^{1}))=\mathrm{rank}_{R}\bX(\Lie E)=h.$$
This completes the proof.
\end{proof}

\section{Lateral Frobenius}
We briefly recall here the lateral Frobenius maps constructed in \cite{BS b}.
Let $\mathbb{W}_{n}$ denote the $\pi$-formal group scheme isomorphic to $\hat{\mathbb{A}}^{n+1}$ with the group structures of the additive Witt vectors of length $n+1$.
Then consider the following diagram (not commutative)
\begin{equation}\label{Lat_witt}
{
\xymatrix{
\mathbb{W}_{n}\ar[d]_{F} \ar[r]^{V} &\mathbb{W}_{n+1}\ar[d]^{F}  \\
\mathbb{W}_{n-1} \ar[r]^{V} &\mathbb{W}_{n}\ar[d]^{F}\\
&\mathbb{W}_{n-1} }}
\end{equation}
However we have the following relation  
\begin{equation}\label{Fr_var}
FFV=FVF.
\end{equation}

The above equation is true because the operator $FV$ is multiplication by $\pi$, and the Frobenius $F$ commutes with $\pi$. We can rephrase the above diagram in terms of jet spaces using the natural identifications $J^{n}E\simeq \mathbb{W}_{n}^{d}$ and $N^{n}\simeq \mathbb{W}_{n-1}^{d}$ as group schemes. For jet spaces, we use the notation $i := V$ and $\phi := F$ for the right column of the diagram above. Then we define the \emph{lateral Frobenius}
\begin{equation*}
\mathfrak{f}:N^{n+1}\rightarrow N^{n}
\end{equation*}
to be simply the map $F:\mathbb{W}_{n}\rightarrow \mathbb{W}_{n-1}$ under the above identification. Thus diagram \eqref{Lat_witt} can be reexpressed as

\begin{equation}\label{Lat_jet}
{
\xymatrix{
N^{n+1}\ar[d]_{\mathfrak{f}} \ar[r]^{i} &J^{n+1}E\ar[d]^{\phi}  \\
N^{n} \ar[r]^{i} &J^{n}E\ar[d]^{\phi}\\
&J^{n-1}E }}
\end{equation} 
Here we emphasize that when we use the notation $N^{n}$, the $A$-module structure will always be understood to be the one that makes $i$ an $A$-linear morphism. It is not be confused with the $A$-module structure coming by transport of structure from the isomorphism $N^{n}\simeq \mathbb{W}_{n-1}^{d} \simeq
 J^{n-1}E$ of group schemes.
We also remark that while $i$ is a morphism of $S$-schemes, the vertical arrows $\phi$  and $\mathfrak{f}$ in diagram \eqref{Lat_jet} lie over the Frobenius endomorphism  of $S$, rather than the identity morphism.\medskip

Although the lateral Frobenius $\mathfrak{f}:N^{n}\rightarrow N^{n-1}$ is a
morphism of group schemes, following a similar argument as Proposition 7.2 in \cite{BS b}, they are $A$-linear.
The lateral Frobenius $\mathfrak{f}$ along with the projection map $u$  make the system $\{N^{n}\}_{n\geq 0}$ into a prolongation sequence. 

\begin{lemma}\label{lf identity}
For $n\geq 2$, we have $\phi^{\circ n}\circ i=\phi\circ i\circ \mathfrak{f}^{n-1}$.
\end{lemma}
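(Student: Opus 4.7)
The plan is a direct induction on $n \ge 2$, bootstrapping off the base identity $\phi^{\circ 2}\circ i = \phi\circ i\circ \mathfrak{f}$, which is equation \eqref{comp} stated in the introduction. That base identity itself is the translation of the Witt vector relation $FFV = FVF$ (equation \eqref{Fr_var}) under the identifications $J^n E \simeq \mathbb{W}_n^d$ and $N^n \simeq \mathbb{W}_{n-1}^d$, with $V$ corresponding to $i$ and $F$ corresponding to $\phi$ on the jet spaces and to $\mathfrak{f}$ on the kernels. So the starting point is free.

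For the inductive step, suppose the identity $\phi^{\circ n}\circ i = \phi\circ i\circ \mathfrak{f}^{n-1}$ holds at every appropriate level of the tower. Write
$$\phi^{\circ (n+1)}\circ i \;=\; \phi^{\circ (n-1)}\circ\bigl(\phi^{\circ 2}\circ i\bigr) \;=\; \phi^{\circ (n-1)}\circ \bigl(\phi\circ i\circ \mathfrak{f}\bigr) \;=\; \bigl(\phi^{\circ n}\circ i\bigr)\circ \mathfrak{f},$$
where the middle equality uses the base case applied at the relevant level. Now apply the induction hypothesis to the factor $\phi^{\circ n}\circ i$ on the right-hand side:
$$\bigl(\phi^{\circ n}\circ i\bigr)\circ \mathfrak{f} \;=\; \bigl(\phi\circ i\circ \mathfrak{f}^{n-1}\bigr)\circ \mathfrak{f} \;=\; \phi\circ i\circ \mathfrak{f}^{n},$$
which is the desired identity for $n+1$.

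The only subtlety is bookkeeping: the symbols $i$, $\phi$, $\mathfrak{f}$ occur at several levels of the inverse systems $\{J^n E\}_n$ and $\{N^n\}_n$, so the statement is really a family of identities, one for each source $N^m$ with $m\ge n$. The induction proceeds uniformly in $m$, and no genuine obstacle arises beyond writing down the compositions at the correct levels.
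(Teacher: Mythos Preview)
Your proof is correct and follows the same approach as the paper: the paper's proof simply states that it suffices to prove the case $n=2$, which follows from the Witt vector identity $FFV=FVF$ (equation \eqref{Fr_var}), leaving the routine induction implicit. You have written out that induction explicitly, together with the level-bookkeeping remark, so your argument is a faithful (and more detailed) rendering of the paper's.
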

\proof Enough to prove for $n=2$, which follows from equation \eqref{Fr_var}.\qed

%
%

\begin{theorem}\label{basis_canonical_char} ${\bX}(N^{n})$ is a free 
$R$-module of rank $nh$.

\end{theorem}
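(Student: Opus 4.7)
The plan is to combine the two structural isomorphisms proved earlier---$N^n \cong J^{n-1}N^1$ (Theorem \ref{char ker}) and $N^1 \cong \hG^d$ (Theorem \ref{Iso_N^1})---to reduce to a computation of $A$-linear characters on the arithmetic jet space of $\hG$, and then proceed by induction on $n$.

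\textbf{Reduction.} Since $J^{n-1}$ is a functor on $A$-module schemes that preserves finite products, combining the two isomorphisms gives an $A$-linear isomorphism
$$N^n \;\cong\; J^{n-1}(\hG^d) \;\cong\; (J^{n-1}\hG)^d.$$
Consequently $\bX(N^n) \cong \Hom_A(J^{n-1}\hG, \hG)^d$, and it suffices to prove that $\Hom_A(J^{n-1}\hG, \hG)$ is a free $R$-module of rank $n$.

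\textbf{Candidate basis.} I define $\zeta_0 : J^{n-1}\hG \to \hG$ to be the natural projection and, inductively, $\zeta_i := \phi^*\zeta_{i-1}$ for $1 \leq i \leq n-1$, using the Frobenius lift $\phi : J^{m}\hG \to J^{m-1}\hG$ in the prolongation sequence. Each $\zeta_i$ is $A$-linear by functoriality, and on Lie algebras the $\zeta_i$ induce the $n$ distinct coordinate projections $\Lie(J^{n-1}\hG) \cong R^n \to R$. This shows $R$-linear independence of $\zeta_0, \dots, \zeta_{n-1}$ in $\Hom_A(J^{n-1}\hG, \hG)$.

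\textbf{Induction on $n$.} The base case $n = 1$ reduces to $\Hom_A(\hG, \hG) \cong R$: by Lemma 4.1 any such morphism has the form $\sum_{i \geq 0} a_i x^{q^i}$, and the $A$-linearity constraint $f(\theta(t)x) = \theta(t)f(x)$ translates to $a_i \theta(t)^{q^i} = a_i \theta(t)$; since $\theta(t) \in R$ is the uniformizer (hence transcendental over $\FF_q$), this forces $a_i = 0$ for all $i \geq 1$, yielding rank $1$. For the inductive step, apply $\Hom_A(-, \hG)$ to the short exact sequence of $A$-module schemes
$$0 \;\longrightarrow\; K \;\longrightarrow\; J^{n-1}\hG \;\stk{u}{\longrightarrow}\; J^{n-2}\hG \;\longrightarrow\; 0,$$
where $K := \ker(u) \cong \hG$ as group schemes (the top Witt coordinate) equipped with a Frobenius-twisted $A$-action. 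The resulting left-exact sequence
$$0 \longrightarrow \bX(J^{n-2}\hG) \stk{u^*}{\longrightarrow} \bX(J^{n-1}\hG) \longrightarrow \Hom_A(K, \hG)$$
has leftmost term free of rank $n-1$ (induction hypothesis) and rightmost term free of rank $1$ (base case applied to $K \cong \hG$). One then verifies that $\zeta_{n-1}$ restricts to a generator of $\Hom_A(K, \hG)$, so the restriction map is surjective, yielding $\bX(J^{n-1}\hG)$ free of rank $n$ with basis $\{\zeta_0, \dots, \zeta_{n-1}\}$.

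\textbf{Main obstacle.} The delicate step is the surjectivity of the restriction $\bX(J^{n-1}\hG) \to \Hom_A(K, \hG)$---equivalently, the non-vanishing of $\zeta_{n-1}|_K$. This comes down to an explicit Witt-coordinate computation: since $\phi$ acts on $J^m\hG$ as Frobenius composed with truncation, the iterated pullback $(\phi^*)^{n-1}$ of the tautological coordinate reads off precisely the top Witt slot, which is nonzero on the kernel of the projection. A secondary technical point is tracking the Frobenius-twisted $A$-module structure on $K$ carefully enough to reapply the base case; this parallels the analysis of $\varphi_{N^1}$ carried out in Lemma \ref{action_N^{1}}.
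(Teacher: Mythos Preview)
Your reduction to showing $\Hom_A(J^{n-1}\hG,\hG)\cong R^n$ is exactly the paper's first step. Where you diverge is in proving this: the paper simply invokes Proposition~7.4 of \cite{BS b} (applied to a Drinfeld module $G$, using $N^nG\simeq J^{n-1}\hG$), while you supply a self-contained induction via the one-step truncation sequence $0\to K\to J^{n-1}\hG\to J^{n-2}\hG\to 0$. This is a legitimate alternative and arguably more transparent, but two points need correcting.

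First, your ``secondary technical point'' evaporates: because $\phi$ is an $\hat A$-algebra endomorphism, $\phi(\theta(a))=\theta(a)$ for every $a\in A$, so the induced $A$-action on the top Witt slot $K$ is the \emph{untwisted} tautological one, and your base case applies directly. (Relatedly, $\theta(t)$ need not be the uniformizer, but the relevant fact---that $\theta(t)\notin\FF_{q^i}$ for any $i\geq 1$---holds since otherwise $\pi$ would lie in the residue field.)

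Second, and more substantively, your claim that $\zeta_{n-1}$ restricts to a \emph{generator} of $\Hom_A(K,\hG)\cong R$ is false. In Witt coordinates $\zeta_{n-1}(x_0,\dots,x_{n-1})=x_0^{\hq^{n-1}}+\pi x_1^{\hq^{n-2}}+\cdots+\pi^{n-1}x_{n-1}$, so $\zeta_{n-1}|_K=\pi^{n-1}\cdot\mathrm{id}$, which is not a unit multiple of the identity. Consequently the restriction map $\bX(J^{n-1}\hG)\to\Hom_A(K,\hG)$ is not surjective and your $\{\zeta_i\}$ need not be an $R$-basis a priori. The theorem is nonetheless recovered: since $\zeta_{n-1}|_K\neq 0$, the image of the restriction map is a nonzero submodule of $R$, hence free of rank~$1$; the short exact sequence $0\to\bX(J^{n-2}\hG)\to\bX(J^{n-1}\hG)\to\mathrm{Image}\to 0$ then exhibits $\bX(J^{n-1}\hG)$ as an extension of free modules of ranks $n-1$ and $1$ over a DVR, whence free of rank~$n$. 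So replace ``generator'' by ``nonzero element'' and drop the basis claim, and your argument goes through.
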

\proof Note that by Corollary \ref{rank-N^1}, we have $\bX(N^{1})\simeq R^{h}$.   
 Then a similar argument as in Proposition $7.4$ in \cite{BS b} will show that ${\bX}(N^{n})$ is a free $R$-module of rank $nh$.
\qed
\section{The $R\{\phi^*\}$-module ${\bX}_{\infty}(E)$}
%
%
\subsection{Splitting of $J^{n}E$}\label{splitting_J^n}
Note that the short exact sequence \eqref{exact} always splits as $\mathbb{F}_{q}$-module schemes by the \cblue \textup{Teichm\"{u}ller section}$$\teich:E\longrightarrow J^{n}E .$$ Therefore as an $\mathbb{F}_{q}$-module scheme $$J^{n}E \simeq E\times N^{n}.$$ Given any element $\Theta \in {\bX}_{n}(E)$, we have the following commutative diagram 
$${
\xymatrix{
E \ar[r]^\teich \ar[rd]_{\teich^{*}\Theta} &
J^{n}E \ar[d]^\Theta &
N^{n} \ar[l]_i \ar[ld]^{i^{*}\Theta}\\
&\hat{\mathbb{G}}_a}
}.$$
Therefore we can write $\Theta(\mathbf{x}_{0},\ldots \mathbf{x}_{n})=\teich^{*}\Theta(\mathbf{x}_{0})+i^{*}\Theta(\mathbf{x}_{1},\ldots ,\mathbf{x}_{n})$. We would like to remark here that the above splitting of $J^{n}E$ is not as
$A$-modules schemes in general. The map $\teich^{*}\Theta$ is  $\mathbb{F}_q$-linear only. The following theorems are straightforward generalization of
 Section $8$ of \cite{BS b}. \color{black}
\begin{theorem}\label{one parameter} There exists a unique $A$-linear map $h$ commuting the following diagram

$$\xymatrix{
N^{n+1} \ar[rr]^-{\phi \circ i-i \circ \mathfrak{f}}  \ar[d]_u & & J^{n}E \\
N^1 \ar@{..>}[rru]_h & &
}$$
\end{theorem}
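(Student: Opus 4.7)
The plan is to reduce to a direct calculation in Witt coordinates, combined with a uniqueness argument coming from the surjectivity of $u$.

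First I would note that $g := \phi \circ i - i \circ \mathfrak{f}$ is $A$-linear, since each of $\phi$, $i$, and $\mathfrak{f}$ is (for $\mathfrak{f}$ this was observed immediately after diagram \eqref{Lat_jet}; for $\phi$ and $i$ it is part of the jet-space construction). The projection $u:N^{n+1}\to N^{1}$ is surjective: using the isomorphism $N^{n+1}\cong J^{n}N^{1}$ from Theorem \ref{char ker}, $u$ becomes the canonical projection $J^{n}N^{1}\to N^{1}$, which admits a Teichm\"{u}ller section. Surjectivity of $u$ gives the uniqueness of any $h$ and also upgrades such an $h$ to an $A$-linear map once the factorization $g = h\circ u$ is produced.

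For existence, I would work in Witt coordinates, using the identifications $J^{m}E\cong \mathbb{W}_{m}^{d}$ and $N^{m+1}\cong \mathbb{W}_{m}^{d}$ (via Verschiebung) under which $i$ is $V$ and $\phi$ is $F$ --- precisely the set-up used to define the lateral Frobenius in diagram \eqref{Lat_jet}. Under these, $g$ becomes $FV - VF : \mathbb{W}_{n}^{d}\to \mathbb{W}_{n}^{d}$. On ghost components the answer is transparent: $w_{k}(FV(a)) = w_{k}(\pi a) = \pi w_{k}(a)$ since $FV = [\pi]$, while $w_{0}(VF(a)) = 0$ and $w_{k}(VF(a)) = \pi w_{k-1}(F(a)) = \pi w_{k}(a)$ for $k\geq 1$. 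Subtracting, the ghost vector of $g(a)$ is $(\pi a_{0}, 0, \ldots, 0)$. Since $R$ is $\pi$-torsion free, one can solve the Witt relations inductively and write each Witt coordinate of $g(a)$ as an explicit polynomial in $a_{0}$ alone with coefficients in $R$; this produces $h$ as a morphism of $\pi$-formal schemes $N^{1}\to J^{n}E$. Under the identifications $u$ corresponds to $a\mapsto a_{0}$, so $g = h\circ u$ as desired.

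The main obstacle I expect is bookkeeping with the Witt-length shifts and verifying that the $A$-equivariance survives the Witt identification (the $A$-structure on $N^{1}$ comes from Lemma \ref{action_N^{1}} and Theorem \ref{Iso_N^1}). A conceptually cleaner route to existence is via Lemma \ref{lf identity}: composing $g$ with $\phi$ yields $\phi^{\circ 2}\circ i - \phi\circ i\circ \mathfrak{f} = 0$, so $\mathrm{Im}(g)\subseteq \ker(\phi : J^{n}E \to J^{n-1}E)$, and one then identifies this kernel with the image under $h$ computed above. Either way the crux is that all ghost components of $FV - VF$ except $w_{0}$ vanish.
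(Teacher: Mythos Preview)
Your proposal is correct. The paper itself does not give a self-contained proof but simply cites Proposition~8.4 of \cite{BS b}; the argument there proceeds along essentially the same lines as yours, namely the ghost-vector computation showing that $FV-VF$ has ghost components $(\pi a_0,0,\dots,0)$, from which the factorization through the first Witt coordinate (i.e.\ through $u$) follows, with $A$-linearity of $h$ inherited via the surjectivity of $u$.

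One small remark on your alternative route via Lemma~\ref{lf identity}: knowing $\phi\circ g=0$ tells you $\mathrm{Im}(g)\subseteq\ker(\phi:J^nE\to J^{n-1}E)$, which is a constraint on the \emph{target}, whereas the desired factorization $g=h\circ u$ is a constraint on the \emph{source}. The closed subscheme $\ker(\phi)$ is cut out by $\mathbf{x}_i^{\hq}+\pi\mathbf{x}_{i+1}=0$ for $0\le i\le n-1$, and identifying it with the image of a map from $N^1$ (or equivalently, extracting a morphism $N^1\to J^nE$) still requires the integrality step --- solving those equations for $\mathbf{x}_1,\dots,\mathbf{x}_n$ as polynomials in $\mathbf{x}_0$ over $R$ --- which is exactly the inductive Witt inversion in your main argument. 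So the ghost computation is not bypassed; your main route is the proof.
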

\proof 
See Proposition 8.4 in \cite{BS b}. 
\qed
\begin{proposition}\label{Fr identity} Let $\Theta$ be a character in  ${\bX}_{n}(E)$.
\begin{itemize}
\item[(i)] We have $$i^{*}\phi^{*}\Theta=\mathfrak{f}^{*}i^{*}\Theta + \gamma. \Psi_{1},$$ where $\Psi_{1} \in {\bX}(N^{1})$ is the canonical character and $\gamma\in R^{d}$ is divisible by $\pi$.\medskip

\item[(ii)] For $n\geq 2$, we have $$i^{*}(\phi^{\circ n})^{*}\Theta = (\mathfrak{f}^{n-1})^{*}i^{*}\phi^{*}\Theta.$$
\end{itemize} 

\end{proposition}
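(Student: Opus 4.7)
The plan is to deduce both parts from identities already established, with part (ii) being essentially immediate and part (i) reducing to an additional $\pi$-divisibility step.

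For part (ii), Lemma \ref{lf identity} supplies the identity $\phi^{\circ n}\circ i = \phi\circ i\circ\fra^{n-1}$ as morphisms $N^{2n}\to J^nE$ (valid for $n\geq 2$). Post-composing both sides with $\Theta\in\bX_n(E)$ and rewriting in pullback notation yields exactly $i^{*}(\phi^{\circ n})^{*}\Theta = (\fra^{n-1})^{*}i^{*}\phi^{*}\Theta$.

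For part (i), I would first apply Theorem \ref{one parameter} to obtain the unique $A$-linear $h\colon N^1\to J^nE$ with $\phi\circ i - i\circ\fra = h\circ u$, where $u\colon N^{n+1}\to N^1$ is the projection in the prolongation sequence. Post-composing with $\Theta$ gives
$$
i^{*}\phi^{*}\Theta - \fra^{*}i^{*}\Theta \;=\; (\Theta\circ h)\circ u.
$$
The character $\Theta\circ h$ lies in $\bX(N^1)$. By Theorems \ref{Iso_N^1} and \ref{basis_canonical_char}, $\bX(N^1)$ is free of rank $d$ over $R$, generated by the canonical characters $\Psi_1=(\Psi_{1,1},\ldots,\Psi_{1,d})$ arising from the coordinate projections under $\nu_1\colon N^1\isomap\hG^d$. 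Hence $\Theta\circ h = \gamma\cdot\Psi_1 := \sum_j \gamma_j\Psi_{1,j}$ for a unique $\gamma\in R^d$, and, identifying a character on $N^1$ with its pullback to $N^{n+1}$ along $u$, this is the formula of part (i) modulo the $\pi$-divisibility claim.

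It then remains to show $\gamma\in\pi R^d$, which I would establish by proving that $h$ itself is divisible by $\pi$ in the sense that it factors through the $A$-module endomorphism $\varphi_{J^nE}(z)$. Under the group-scheme identifications $N^{n+1}\cong\mathbb{W}_n^d$ (via Verschiebung) and $J^nE\cong\mathbb{W}_n^d$, the morphism $\phi\circ i$ corresponds to $F\circ V$ while $i\circ\fra$ corresponds to $V\circ F$; a direct ghost-coordinate computation, using $FV=\pi$, shows that every ghost component of their difference is divisible by $\pi$. Writing $h=\varphi_{J^nE}(z)\circ h'$ for a unique $A$-linear $h'$ and exploiting the $A$-linearity of $\Theta$, one obtains
$$
\Theta\circ h \;=\; \Theta\circ\varphi_{J^nE}(z)\circ h' \;=\; \varphi_{\hG}(z)\circ\Theta\circ h' \;=\; \pi\cdot(\Theta\circ h'),
$$
so that $\gamma=\pi\gamma'$ with $\gamma'\in R^d$ determined by $\Theta\circ h'=\gamma'\cdot\Psi_1$. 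The main obstacle is ensuring that the $\pi$-divisibility on ghost components truly lifts to a factorization through $\varphi_{J^nE}(z)$ in the category of $A$-module schemes, since the $A$-action on $J^nE$ is not naive scalar multiplication by $\pi$ on Witt coordinates (compare the formula of Lemma \ref{action_N^{1}}). This is a straightforward adaptation of the Drinfeld-module argument in Section $8$ of \cite{BS b}.
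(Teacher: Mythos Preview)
Your overall architecture matches the paper's: the paper's proof is a one-line reference to Proposition~8.5 of \cite{BS b}, and you correctly identify the two structural inputs (Lemma~\ref{lf identity} for (ii), Theorem~\ref{one parameter} for (i)) and reduce everything to the $\pi$-divisibility of $\gamma$.

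The place where you take a wrong turn is the $\pi$-divisibility step. The factorization $h=\varphi_{J^nE}(z)\circ h'$ that you propose does not obviously exist: the $0$-th Witt component of $\varphi_{J^nE}(z)\circ h'$ is $\varphi_E(z)(y_0)=\pi y_0 + A_1 y_0^q+\cdots+A_s y_0^{q^s}$, and there is no reason to expect a $y_0\in R[x_1]\hat{\ }$ making this equal to $\pi x_1$. You flag this yourself as ``the main obstacle,'' but it is not the obstacle that \cite{BS b} overcomes---it is simply not the route taken. The direct argument is shorter: since $A$-linear characters of $N^1\cong\hG^d$ are purely scalar (any $\FF_q$-linear endomorphism $\sum b_i\tau^i$ of $\hG$ commuting with $\pi$ forces $b_i=0$ for $i\geq 1$), the vector $\gamma$ equals $D(\Theta\circ h)$ at the origin. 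Now compute $h$ explicitly by evaluating $(\phi\circ i - i\circ\fra)(x_1,0,\ldots,0)$ in Buium--Joyal/Witt coordinates: one finds $\phi\circ i(x_1,0,\ldots,0)=(\pi x_1, x_1^{\hq},0,\ldots)$ and $i\circ\fra(x_1,0,\ldots,0)=(0,x_1^{\hq},0,\ldots)$, so $h(x_1)$ has $0$-th component $\pi x_1$ and all higher components of degree $\geq\hq$ in $x_1$. Hence $Dh=(\pi I_d,0,\ldots,0)$ and $\gamma=\pi\cdot P_0$ where $P_0$ is the $0$-th block of $D\Theta$, giving $\gamma\in\pi R^d$ immediately.
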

\proof Follows similarly as in Proposition $8.5$ in \cite{BS b}.   \qed

\begin{theorem}\label{frob_bij}
 For $n\geq 1$, we have the following commutative diagram
 $${
\xymatrix{
{\bX}_{n}(E)/{\bX}_{n-1}(E)\ar@{^{(}->}[d]_{i^{*}} \ar@{^{(}->}[r]^{\phi^{*}} &{\bX}_{n+1}(E)/{\bX}_{n}(E)\ar@{^{(}->}[d]^{i^{*}}  \\
{\bX}(N^{n})/{\bX}(N^{n-1}) \ar[r]^{\mathfrak{f}^{*}}_{\sim} &{\bX}(N^{n+1})/{\bX}(N^{n}) }}
$$
with $i^{*}$ and $\phi^{*}$ injective and $\mathfrak{f}^{*}$ bijective . 
\end{theorem}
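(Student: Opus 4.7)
The plan is to establish the four claims in the order (a) commutativity, (b) injectivity of $i^{*}$, (c) bijectivity of $\fra^{*}$, and (d) injectivity of $\phi^{*}$, with (d) falling out formally from (a)--(c). For (a), Proposition \ref{Fr identity}(i) gives $i^{*}\phi^{*}\Theta = \fra^{*} i^{*}\Theta + \gamma\Psi_{1}$ for any $\Theta\in \bX_{n}(E)$; since $\Psi_{1}\in \bX(N^{1})\subset \bX(N^{n})$ under the tower of pullbacks along the projections $N^{n}\to\cdots\to N^{1}$, the correction $\gamma\Psi_{1}$ vanishes in the target quotient $\bX(N^{n+1})/\bX(N^{n})$, so the square commutes.

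For (b), I will use the Teichm\"uller splitting recalled in Section \ref{splitting_J^n}. Suppose $\Theta\in \bX_{n}(E)$ satisfies $i^{*}\Theta = u^{*}\eta$ for some $\eta\in \bX(N^{n-1})$. The decomposition $\Theta(\mathbf{x}_{0},\ldots,\mathbf{x}_{n})=v^{*}\Theta(\mathbf{x}_{0})+i^{*}\Theta(\mathbf{x}_{1},\ldots,\mathbf{x}_{n})$ then shows that $\Theta$ depends only on the image under $u\colon J^{n}E\to J^{n-1}E$, so there is an $\FF_{q}$-linear factorization $\Theta = u^{*}\tilde\Theta$ for some $\tilde\Theta\colon J^{n-1}E\to\hG$. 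Because $u$ is a surjection of $A$-module schemes and $\Theta$ is $A$-linear, $\tilde\Theta$ inherits $A$-linearity, placing $\Theta$ inside $\bX_{n-1}(E)$.

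The main technical step is (c), the bijectivity of $\fra^{*}\colon \bX(N^{n})/\bX(N^{n-1})\to \bX(N^{n+1})/\bX(N^{n})$. By Theorem \ref{basis_canonical_char} both sides are free $R$-modules of rank $d$, so it suffices to exhibit $\fra^{*}$ as invertible on an explicit basis. Under the identification $N^{n}\cong (J^{n-1}\hG)^{d}$ from \eqref{Iso_N^n}, the canonical $\delta$-characters $\{\Psi_{k,j}\}_{1\le k\le n,\,1\le j\le d}$ of the individual $J^{n-1}\hG$ factors assemble into a basis of $\bX(N^{n})$, and the classes $\{[\Psi_{n,j}]\}_{j=1}^{d}$ span the top quotient. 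Since the lateral Frobenius $\fra$ corresponds under Witt coordinates to the Witt-vector Frobenius $F\colon \mathbb{W}_{n}\to\mathbb{W}_{n-1}$, a direct computation in these coordinates, modelled on Section 8 of \cite{BS b}, should give $\fra^{*}\Psi_{n,j}\equiv \alpha_{j}\Psi_{n+1,j}\pmod{\bX(N^{n})}$ with units $\alpha_{j}\in R^{\times}$, providing the claimed isomorphism on quotients. This is the step I expect to be the main obstacle, since it requires tracking the Witt-vector lateral Frobenius action on canonical characters across the $d$-fold product decomposition of $N^{n}$. Once (c) is in hand, (d) is immediate: if $\phi^{*}\Theta\in \bX_{n}(E)$ then $i^{*}\phi^{*}\Theta\in \bX(N^{n})$, commutativity forces $\fra^{*}i^{*}\Theta\in \bX(N^{n})$, bijectivity of $\fra^{*}$ on quotients yields $i^{*}\Theta\in \bX(N^{n-1})$, and (b) then gives $\Theta\in \bX_{n-1}(E)$.
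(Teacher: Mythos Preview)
Your proposal is correct and follows the same line as the paper, which in fact gives no argument at all beyond ``Follows similarly as in Proposition 8.6 in \cite{BS b}.'' Your steps (a), (b), (d) are exactly the right reductions, and your worry about (c) is unfounded: once you have the $A$-linear identification $N^{n}\simeq J^{n-1}N^{1}\simeq (J^{n-1}\hG)^{d}$ of Theorem~\ref{char ker} and \eqref{Iso_N^n}, the lateral Frobenius $\fra$ on the prolongation sequence $\{N^{n}\}$ becomes the usual Frobenius $\phi$ on the jet tower of $N^{1}$, so $\fra^{*}$ on $\bX(N^{n})/\bX(N^{n-1})\simeq R^{d}$ is the $d$-fold product of the rank-one computation already carried out in \cite{BS b}; no new obstacle appears and the $\alpha_{j}$ are all $1$.
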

\proof Follows similarly as in Proposition 8.6 in \cite{BS b}. \qed

\section{Extensions of abelian Anderson modules and de Rham Cohomology}
Here we recall some of the general theory of extensions of $A$-module schemes as in Section $5$ of \cite{Gekeler_a}. Let $(E,\varphi_{E})$ and $(E',\varphi_{E'})$ be two admissible $A$-modules of dimension $d$ and $l$, respectively. Let $M:=\mathrm{Hom}_{R,\mathbb{F}_q}(E,E')$ and $N:=M\tau=\{m\in M~|~\mathrm{Lie}(m)=0\}$ be an $A_{R}$ submodule of $M$. Note that when $(E',\varphi_{E'})=(\hat{\mathbb{G}}_{a},\varphi_{\hat{\mathbb{G}}_a})$, $M$ is the associated $A$-motive $M(E)$ of $E$.

\begin{definition} A $(\varphi_{E},\varphi_{E'})$-biderivation is an $\mathbb{F}_{q}$-linear map $\eta:A\longrightarrow N$ satisfying the product rule $$\eta(ab)=\varphi_{E'}(a)\eta(b)+\eta(a)\varphi_{E}(b) ~\mathrm{for~ all} ~a,b \in A.$$
\end{definition}
We denote the set of all $(\varphi_{E},\varphi_{E'})$-biderivations as $D(\varphi_{E},\varphi_{E'})$ and we will refer to them as biderivations only. 
For each $m\in M$, we can associate an $\FF_{q}$-linear map $\eta^{m}:A\longrightarrow M$ as $$\eta^{m}(a):=\varphi_{E'}(a)m-m\varphi_{E}(a).$$ If the image of $\eta^{m}$ lies in $N$, we call it inner (resp. strictly inner if $m\in N$) biderivations and denoted as $D_{i}(\varphi_{E},\varphi_{E'})$ (resp. $D_{si}(\varphi_{E},\varphi_{E'})$). Whenever  $(E',\varphi_{E'})=(\hat{\mathbb{G}}_{a},\varphi_{\hat{\mathbb{G}}_a})$, we will denote the
above modules as $D_i(\varphi_E)$ and $D_{si}(\varphi_E)$ respectively.

\noindent Recall from Section $5$ of \cite{Gekeler_a} that $\mathrm{Ext}_{A}(E,E')$ denotes the group of extension classes of $A$-module schemes of $E$ by $V$ over $R$. Moreover $\mathrm{Ext}^{\#}_{A}(E,E')$ denotes the group of extension classes of $A$-module schemes of $E$ by $E'$ over $R$ together with a splitting of the 
induced extension of Lie algebras.
Although in \cite{Gekeler_a} only the extensions by $E'={\hat{\mathbb{G}}_a}$ has been considered but the similar theory can be developed in general. Given any $\eta \in D(\varphi_{E},\varphi_{E'})$ consider the matrix

\begin{equation*}
\varphi_{\eta}(a) = 
\begin{pmatrix}
\varphi_{E'}(a) & \eta(a) \\
0 & \varphi_{E}(a) 
\end{pmatrix}.
\end{equation*}
We can associate an extension of $(E,\varphi_{E})$ by $(E',\varphi_{E'})$ of $A$-module schemes over $S$ as 
\begin{equation*}
0\longrightarrow E' \longrightarrow E'\oplus E \longrightarrow E \longrightarrow 0,
\end{equation*}
where the $A$-module action on $E'\oplus E$ is given by the above matrix $\varphi_{\eta}(a)$. Since image of $\eta $ lies in $N$, the induced exact sequence on Lie algebras will split. Therefore there is a canonical map $$\beta^{\#}:D(\varphi_{E},\varphi_{E'})\longrightarrow \mathrm{Ext}^{\#}_{A}(E,E')$$  $$\eta \longrightarrow ([\eta],s) .$$
This will induce another map $$\beta:D(\varphi_{E},\varphi_{E'})\longrightarrow  \mathrm{Ext}^{\#}_{A}(E,E') \longrightarrow  \mathrm{Ext}_{A}(E,E')$$ $$ \eta \longrightarrow ([\eta],s)\longrightarrow [\eta].$$

\begin{lemma}\label{ext_bider} We have 

\begin{enumerate}
\item[(i)]The map $\beta^{\#}$ is surjective.
\item[(ii)] $\mathrm{Ker}~(\beta)=D_{i}(\varphi_{E},\varphi_{E'})$.
\item[(iii)]$\mathrm{Ker}~(\beta^{\#})=D_{si}(\varphi_{E},\varphi_{E'})$.
\end{enumerate}
\end{lemma}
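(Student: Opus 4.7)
The plan is to follow the Gekeler-style dictionary between biderivations and extensions, adapted to our setup. The key input is that both $E$ and $V$, being strict $A$-modules, are isomorphic to $\hat{\mathbb{G}}_a^{\bullet}$ as $\mathbb{F}_q$-module schemes, so any extension $0\to V\to W\to E\to 0$ of $A$-module schemes admits an $\mathbb{F}_q$-linear section $\sigma\colon E\to W$. Fixing such a $\sigma$, the identification $W\cong V\oplus E$ puts the $A$-action in the upper triangular shape
$$\varphi_W(a)=\begin{pmatrix}\varphi_V(a) & \eta_\sigma(a)\\ 0 & \varphi_E(a)\end{pmatrix},$$
and the multiplicativity constraint $\varphi_W(ab)=\varphi_W(a)\varphi_W(b)$ forces $\eta_\sigma\colon A\to M$ to satisfy the biderivation product rule.

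For surjectivity of $\beta^{\#}$ in (i), I need $\eta_\sigma$ to land in $N$, i.e.\ to have $\mathrm{Lie}(\eta_\sigma(a))=0$. This is arranged by further refining $\sigma$ so that $\mathrm{Lie}(\sigma)$ equals the given splitting $s$ of the induced Lie algebra extension. Since any two $\mathbb{F}_q$-linear sections of that $R$-linear short exact sequence of Lie algebras differ by an $R$-linear map $\mathrm{Lie}(E)\to\mathrm{Lie}(V)$, we can add a correction (lifted to an $\mathbb{F}_q$-linear element of $M$) to $\sigma$ so as to line up $\mathrm{Lie}(\sigma)$ with $s$. With this choice $\beta^{\#}(\eta_\sigma)=([\eta_\sigma],s)$, giving the required preimage.

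For (ii) and (iii), I would compare two admissible sections $\sigma,\sigma'$ of the same extension. Their difference $m=\sigma'-\sigma$ is an element of $M$ (and of $N$ when both sections are Lie-compatible with $s$), and a direct matrix computation gives
$$\eta_{\sigma'}(a)-\eta_\sigma(a)=\varphi_V(a)m-m\varphi_E(a)=\eta^{m}(a).$$
Conversely, an inner biderivation $\eta^m$ corresponds to the trivial extension via the triangular change of basis built from $m$. Combining these two directions yields $\mathrm{Ker}(\beta)=D_i(\varphi_E,\varphi_V)$, and imposing the additional Lie-splitting constraint (which forces $\mathrm{Lie}(m)=0$, i.e.\ $m\in N$) yields $\mathrm{Ker}(\beta^{\#})=D_{si}(\varphi_E,\varphi_V)$.

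The only non-mechanical step is the correction argument used in paragraph two to arrange $\mathrm{Lie}(\sigma)=s$, but this is a routine linear algebra move: the obstruction lives in $\mathrm{Hom}_R(\mathrm{Lie}(E),\mathrm{Lie}(V))$ and lifts to an $\mathbb{F}_q$-linear element of $M$, so no real obstruction arises. Everything else is bookkeeping with the triangular matrix shape above.
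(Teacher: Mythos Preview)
Your proposal is correct and follows essentially the same Gekeler-style dictionary as the paper. The only minor difference is in part (i): the paper observes directly that since $E$, $V$, and hence $G$ are isomorphic (as group schemes) to their Lie algebras, the given Lie splitting $s$ immediately furnishes a group-scheme section with the correct tangent map, so no separate ``correction'' step is needed; your two-step approach (pick any $\mathbb{F}_q$-linear section, then adjust by an element of $M$ to match $s$) achieves the same end with slightly more bookkeeping.
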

\proof   Let $G$ be an $A$-module scheme and                                                                        
$0\longrightarrow E' \longrightarrow G \longrightarrow E \longrightarrow 0$ be an extension of $E$ by $E'$ with a splitting $s:\mathrm{Lie}(E)\longrightarrow \mathrm{Lie}(G)$. Since $E$ and $E'$ are isomorphic to their Lie algebras as group scheme and $\mathrm{Lie}(G)\simeq \mathrm{Lie}(E')\oplus \mathrm{Lie}(E)$, we have $G\simeq \mathrm{Lie}(G)$ as group schemes, hence $G\simeq E'\oplus E$ as group schemes (not as $A$-module schemes apriori). Thus we have the following split exact sequence of group schemes  $$0\longrightarrow E' \longrightarrow E'\oplus E \longrightarrow E \longrightarrow 0.$$ 
Hence the $A$-module structure on $G$, under the isomorphism with $E'\oplus E$ can be written as
\begin{equation*}
\varphi_{G}(a):= 
\begin{pmatrix}
\varphi_{E'}(a) & \eta(a) \\
0 & \varphi_{E}(a) 
\end{pmatrix} \ \mathrm{for ~ all}~ a\in A.
\end{equation*} 
Then from the splitting of Lie algebras and the associativity of $A$-action implies that $\eta$ is indeed a biderivation. This shows that $\beta^{\#}$ is surjective and proves (i). \medskip

Given an $\eta^{m} \in D_{i}(\varphi_{E},\varphi_{E'})$, we can define the $A$-module splitting map $$s_{\eta^{m}}:E \longrightarrow E'\oplus E $$  $$x \mapsto (-m(x),x).$$ 
On the other hand given any $A$-module splitting 
$$s:E \longrightarrow E'\oplus E $$  $$x \mapsto (m(x),x),$$ 
we have $m\in M$ and $s$ commutes with the $A$-action which implies $\eta=\eta^{(-m)}$. Therefore kernel of $\beta $ is $D_{i}(\varphi_{E},\varphi_{E'})$ proving (ii).\medskip
 
\noindent Similarly, we can show that given any trivial extension of $E$ by $E'$  with trivial splitting on Lie algebras corresponds to an element $\eta \in D_{si}(\varphi_{E},\varphi_{E'})$, i.e kernel of $\beta^{\#}$ is $D_{si}(\varphi_{E},\varphi_{E'})$ proving (iii).
\qed
\subsection{Functoriality}
Let $(V',\varphi_{V'})$ be another strict $A$-module scheme with an $A$-module map $f:E' \longrightarrow V'$. Then we have a map 
$$f_{*}:D(\varphi_{E},\varphi_{E'})\longrightarrow D(\varphi_{E},\varphi_{V'})$$ 
$$\eta \mapsto f\circ \eta.$$ 
Under the above map $f_*$, we have that $D_{i}(\varphi_{E},\varphi_{E'})$ goes to $D_{i}(\varphi_{E},\varphi_{V'})$ and $D_{si}(\varphi_{E},\varphi_{E'})$ to $D_{si}(\varphi_{E},\varphi_{V'})$. Therefore by Lemma \ref{ext_bider}  we have the following map 
\begin{equation*}
f_{*}:  \mathrm{Ext}_{A}(E,E') \longrightarrow  \mathrm{Ext}_{A}(E,V'),
\end{equation*}
and 
\begin{equation*}
f_{*}: \mathrm{Ext}^{\#}_{A}(E,E')\longrightarrow  \mathrm{Ext}^{\#}_{A}(E,V').
\end{equation*}
In this way, we get an extension $f_{*}G$ of $E$ by $V'$ which is the push out by $f$ as follows:
\begin{align}\label{pushout}
\xymatrix{
0  \ar[r] &E' \ar[d]^{f} \ar[r] &G \ar[d]^{e_{f}} \ar[r] &E \ar@{=}[d] \ar[r]&0 \\
0 \ar[r] &V' \ar[r] &f_{*}G \ar[r] & E \ar[r] &0}.
\end{align}
Here we recall the definition of de Rham cohomology of an Anderson module introduced in Section 3 of \cite{BrPa}.
\begin{definition}\label{deRhamdef} Let $(E,\varphi_{E})$ be an abelian Anderson $A$-module. The de Rham cohomology of $E$ is defined as the $R$-module 
\begin{equation*}
\bH^{*}_{\mathrm{dR}}({E}):
=\mathrm{Ext}^{\#}_{A}(E,\hat{\mathbb{G}}_{a})=D(\varphi_{E})/D_{si}(\varphi_{E}).
\end{equation*}
\end{definition}

\begin{theorem}\label{rank_deRham} 
Let $E$ be an abelian Anderson $A$-module of rank $r$ over $R$. Then the de Rham cohomology module $\bH^{*}_{\mathrm{dR}}({E})$ is a free $R$-module of rank $r$. 

\end{theorem}
\begin{proof}
By definition the associated $A$-motive $M(E)$ of $E$ is an $A_{R}=A\otimes_{\mathbb{F}_{q}} R$-module, which is locally free of rank $r$ defined by
$$(a\otimes b)m:=b\circ m\circ \varphi_{E}(a),\quad \mathrm{for \ any \ a\otimes b \in A\otimes_{\mathbb{F}_{q}}R}.$$ 
Now consider the ideal $I$ generated by elements of the form $a\otimes 1-1\otimes a$ in $A\otimes_{\mathbb{F}_{q}} A$. Let $J$ be the kernel of the following map \begin{align*}
A_{R}=A\otimes_{\FF_{q}} R&\longrightarrow R\\
a\otimes r &\mapsto \theta(a)r
\end{align*}
Then $J=I\otimes_{A\otimes_{\mathbb{F}_{q}} A} A_{R}$. As $X/\FF_{q}$ is smooth curve, we have $I/I^{2}\simeq \Omega^{1}_{A/\FF_{q}}$ is a locally free $A$-module of rank $1$. This implies $J/J^{2}$ is a free $R$-module of rank $1$. 
Note that since $M(E)$ is locally free $A_{R}$-module, that implies $N(E)$ is $A\otimes_{\FF_{q}} A$-torsion free as $\theta$ is injective. 
Then by the results in Section $4.2$ and $4.3$ in \cite{Gekeler_b}, we have the following identifications
\begin{align*}
D(\varphi_{E})\simeq\Hom_{A_{R}}(J, N(E)) ~\mathrm{and} ~D_{si}(\varphi_{E})\simeq J\Hom_{A_{R}}(J, N(E))
\end{align*}
Also note that the right multiplication by $\tau$ map from $M(E)$ to $N(E)$ is an $R$-linear isomorphism. However it is not $A$-linear. This induces an isomorphism in the quotients between $M(E)/JM(E)$ and $N(E)/JN(E)$ as an $R$-module. Therefore we have $M(E)/JM(E)\simeq N(E)/JN(E)\simeq R^{r}$ as an $R$-module. Since $J$ is invertible ideal of $A_{R}$ and $A_{R}/J\simeq R$, we get
\begin{align*}
\bH^{*}_{\mathrm{dR}}({E})&=D(\varphi_{E})/D_{si}(\varphi_{E})\\
& \simeq \Hom_{A_{R}}(J, N(E))/J\Hom_{A_{R}}(J, N(E))\\
&\simeq \Hom_{A_{R}}\left(J, N(E)\right)\otimes _{A_{R}} A_{R}/J\\
&\simeq \Hom_{A_{R}}\left(J, N(E)/JN(E)\right)\\
&\simeq \Hom_{A_{R}}\left(J, A_{R}/J\right)^{r}\\
&\simeq \Hom_{A_{R}}\left(J,\Hom_{A_{R}/J}\left(A_{R}/J,A_{R}/J\right)\right)^{r}\\
&\simeq \Hom_{A_{R}/J}\left(J\otimes_{A_{R}}{A_{R}/J}, A_{R}/J\right)^{r}\\
&\simeq \Hom_{R}(J/J^{2}, R)^{r}\\
&\simeq R^{r}.
\end{align*}
\end{proof}

\begin{lemma}\label{bider-id} Let $B\subset A$ be an $\FF_{q}$-sub-algebra. Assume that $\eta(b)=0$, for all $b\in B$. Then given any $P(t)\in B[t]$, we have an $\mathfrak{n}\in N(E)$ and $Q(t)\in R[t]$ with $Q(0)=0$ such that for any $a\in A$
$$\eta(P(a))= \eta(a)\circ((P'(\theta (a))I+Q(V(a)))\tau^{0}+\eta(a)\circ\mathfrak{n}.$$
\begin{proof}
Note that $\eta$ is additive. Therefore it is enough to prove the statement for $P(t)=bt^{n}$. By definition we have 
\begin{align*}
\eta(ba^{n})=\theta(b)\eta(a^n)+\eta(b)\circ\varphi_{E}(a^{n})= \theta(b)\eta(a^n),~\mathrm{as}~\eta(b)=0.
\end{align*}
Hence the proof is reduced for $P(t)=t^{n}$, which we will do by induction on $n$. Note that for $n=1$, the assumption is true for $Q(t)$ and $\mathfrak{n}$ to be zero. Assume the statement is true for $n$, i.e,
$$\eta(a^n)= \eta(a)\circ(n\theta (a)^{n-1}I+Q_{n}(V(a))\tau^{0}+\eta(a)\circ\mathfrak{n}_{n}.$$
Then 
\begin{align*}
\eta(a^{n+1})&=\theta(a)\eta(a^{n})+\eta(a)\circ\varphi_{E}(a^{n})\\
             &=\theta(a)\eta(a^{n})+\eta(a)\circ(\theta(a^n)I+V(a^{n}))\tau^{0}+ \eta(a)\circ\mathfrak{n}\\
            &=\theta(a)\eta(a)\circ(n\theta (a)^{n-1}I+Q_{n}(V(a)))\tau^{0}+\theta(a)\eta(a)\circ\mathfrak{n}_{n}\\
             &+\eta(a)\circ(\theta(a^n)+V(a^{n}))\tau^{0}+ \eta(a)\circ\mathfrak{n}\\
             &=\eta(a)\circ(((n+1)\theta(a)^{n}I+\theta(a)Q_{n}(V(a))+V(a^{n})))\tau^{0}\\
             &+\eta(a)\circ (\mathfrak{n}+\mathfrak{n}_{n}\circ \theta(a)I\tau^{0})\\
             &=\eta(a)\circ((n+1)\theta(a)^{n}I+Q_{n+1}(V(a)))\tau^{0}+\eta(a)\circ \mathfrak{n}_{n+1}.
\end{align*}
This proves the claim.
\end{proof}
\end{lemma}
\begin{lemma}\label{res-inj}  Let $E$ be an abelian Anderson $A$-module over $R$. Let $B\subset A$ be a subring such that $A$ is separable integral extension of $B$. Then the restriction map $D_{A}(\varphi_{E})\longrightarrow D_{B}(\varphi_{E})$ is injective. 
\end{lemma}
\begin{proof} Assume that $\eta(b)=0$ for all $b\in B$. Given any $a\in A$, there is a separable monic polynomial $P(t)\in B[t]$ such that $P(a)=0$. Then using Lemma \ref{bider-id}, we have
\begin{align}\label{eqn-eta}
\eta(P(a))= \eta(a)\circ(P'(\theta (a))I+Q(V(a)))\tau^{0}+\eta(a)\circ\mathfrak{n}=0
\end{align}
where $\mathfrak{n}\in M(E)\tau$. Since $P(t)$ is separable and $\theta$ is injective, we have $P'(\theta (a)$ is nonzero. Let $\eta(a)=\displaystyle{\sum_{i=0}^{\infty}}B_{j}\tau^{j}\in M(E)$. Assume that $\eta(a)$ is non-zero and $j_{0}$ be the smallest integer such that $B_{j_{0}}\neq 0$.
Then comparing coefficients of $\tau^{j_{0}}$ on both sides of \eqref{eqn-eta}, we have
$$ B_{j_{0}}(P'(\theta (a))I+Q(V(a)))^{(q^{j_{0}})}=0$$
Since $Q(t)\in R[t]$ with $Q(0)=0$, thus $V(a)$ nilpotent implies $Q(V(a))$ is nilpotent. Therefore $(P'(\theta (a))I+Q(V(a)))^{(q^{j_{0}})}$ is an invertible matrix over $K$, that implies $B_{j_{0}}=0$, which is a contradiction. Therefore $\eta(a)=0$ for all $a\in A$ and that proves the restriction map is injective. 
\end{proof}

\begin{lemma}\label{inject} We have ${D_{i}}_{A}(\varphi_{E})\cap {D_{si}}_{B}(\varphi_{E})={D_{si}}_{A}(\varphi_{E}).$
\end{lemma}

\begin{proof} Note that by the definition and Lemma \ref{res-inj} we have ${D_{si}}_{A}(\varphi_{E})\subset {D_{i}}_{A}(\varphi_{E})\cap {D_{si}}_{B}(\varphi_{E})$. Let $\eta=\eta^{m}=\eta^{n}\in {D_{i}}_{A}(\varphi_{E})\cap {D_{si}}_{B}(\varphi_{E})\subset {D_{i}}_{B}(\varphi_{E})$ for some $m\in M(E)$ and $n\in N(E)$. Then $\eta^{m-n}=\eta^{m}-\eta^{n}=\eta-\eta=0 \in {D_{i}}_{B}(\varphi_{E})$. Note that $M(E)$ is torsion free as $A_{R}$-module and $\eta^{m-n}(b)=(b-\theta(b))(m-n)$. Therefore, $\eta^{m-n}(b)=\varphi_{\hG}(b)(m-n)-(m-n)\varphi_{E}(b)=(b-\theta(b))(m-n)=0$, for all $b\in B$. Note that $b-\theta(b)$ generates the nonzero invertible ideal $I_{B}\subset B_{R}$. This implies that $m=n$. Hence $\eta\in {D_{si}}_{A}(\varphi_{E})$. This completes the proof. 

\end{proof}

\begin{lemma}\label{inject-inn} We have ${D}_{A}(\varphi_{E})\cap {D_{i}}_{B}(\varphi_{E})={D_{i}}_{A}(\varphi_{E})$ and ${D}_{A}(\varphi_{E})\cap {D_{si}}_{B}(\varphi_{E})={D_{si}}_{A}(\varphi_{E})$
\end{lemma}

\begin{proof} We will prove the first assertion only, later will follow by similar arguments. Note that by the definition and Lemma \ref{res-inj} we have ${D_{i}}_{A}(\varphi_{E})\subset D_{A}(\varphi_{E})\cap {D_{i}}_{B}(\varphi_{E})$. Let $\eta\in {D}_{A}(\varphi_{E}) $ be an $A$-biderivation such that $\eta|_{B}=\eta^{m}\in {D_{i}}_{B}(\varphi_{E})$ for some $m\in M(E)$. This implies $\theta(b)m-m\varphi_{E}(b)\in N(E)$ for all $b\in B$. By Lemma \ref{res-inj}, it is enough to prove that $\eta^{m}\in {D_{i}}_{A}(\varphi_{E})$, i.e, we need to prove $\eta^{m}(a)\in N(E)$, for all $a\in A$. Note that $\eta^m=\eta^{u}+\eta^{n}$ for $n\in N(E)$ and $u\in R^{d}\tau^{0}$. Moreover, $\eta^{n}\in {D_{i}}_{A}(\varphi_{E})$ directly following the definition. 

Therefore it is enough to consider a $B$-inner derivation of the form $\eta^{u}$, where $u\in R^{d}\tau^{0}$. Then $\eta^u(z)\in N(E)$ implies that $uV(z)=0$. Then by Lemma \ref{lemma-V-op} and Lemma \ref{ext-A-lin} we have $uV(a)=0$. As $uV(a)=0$, we have $\eta^u(a)\in N(E)$ and consequently it becomes an $A$-inner derivation. Therefore $\eta^m$ extends to an $A$-biderivation that completes the proof.  
\end{proof}
\begin{lemma}\label{iso-inn} The natural map induced by restriction $${D_{i}}_{A}(\varphi_{E})/{D_{si}}_{A}(\varphi_{E})\longrightarrow {D_{i}}_{B}(\varphi_{E})/{D_{si}}_{B}(\varphi_{E})$$ is an isomorphism of $R$-modules. 
\end{lemma}
\begin{proof}
By Lemma \ref{res-inj} and Lemma \ref{inject} implies that the natural restriction map in the quotients is injective. Therefore it is enough to prove that the map is surjective. Since $\eta^{m+n}=\eta^{m}+\eta^{n}$ for $m,n\in M(E)$, it is enough to consider a $B$-inner derivation $\eta=\eta^{u}$, where $u\in R^{d}\tau^{0}$. Then $\eta(z)\in N(E)$ implies that $uV(z)=0$. Then by Lemma \ref{lemma-V-op} and Lemma \ref{ext-A-lin} we have $uV(a)=0$. Define $\tilde{\eta}(a):=\eta^{u}(a)$ for all $A$. As $uV(a)=0$, we have $\tilde{\eta}(a)\in N(E)$ and consequently it becomes an $A$-inner derivation. Therefore $\eta$ extends to an $A$-biderivation that completes the proof. 
\end{proof}

The following is the general case of the result proved in \cite{BrPa}. 

\begin{theorem}\label{rank_Ext} Let $E$ be an abelian Anderson $A$-module of rank $r$ over $R$. Then the $R$-module $\mathrm{Ext}_{A}(E,\hat{\mathbb{G}}_{a})=D(\varphi_{E})/D_{i}(\varphi_{E})$ is free of rank $r-h$.
\end{theorem}
\begin{proof}When $A=\FF_{q}[z]$, the assertion follows from Proposition $3.1.3$ in \cite{BrPa}. For general $A$, we consider the subring $B=\FF_{q}[z]\subset A$ and $A$ is separable integral extension of $B$. Then we have a map between the following short exact sequences of $R$-modules given by restrictions
\begin{align*}
\xymatrix{
0  \ar[r] &{D_{i}}_{A}(\varphi_{E})/{D_{si}}_{A}(\varphi_{E}) \ar@{=}[d] \ar[r] &D_{A}(\varphi_{E})/{D_{si}}_{A}(\varphi_{E}) \ar@{^{(}->}[d] \ar[r] &D_{A}(\varphi_{E})/{D_{i}}_{A}(\varphi_{E}) \ar@{^{(}->}[d] \ar[r]&0 \\
0 \ar[r] &{D_{i}}_{B}(\varphi_{E})/{D_{si}}_{B}(\varphi_{E}) \ar[r] &D_{B}(\varphi_{E})/{D_{si}}_{A}(\varphi_{E}) \ar[r] & D_{B}(\varphi_{E})/{D_{i}}_{B}(\varphi_{E}) \ar[r] &0}
\end{align*}
The vertical inclusions in the above diagram follows from Lemma \ref{inject-inn}. As $R$ is a discrete valuation ring, the above diagram shows that $\mathrm{Ext}_{A}(E,\hat{\mathbb{G}}_{a})=D_{A}(\varphi_{E})/{D_{i}}_{A}(\varphi_{E})$ is a free $R$-module being a submodule of $\mathrm{Ext}_{B}(E,\hat{\mathbb{G}}_{a})$, which is free by Proposition $3.1.3$ in \cite{BrPa}.  
Moreover, by Proposition $3.1.3$ in \cite{BrPa} and by Lemma \ref{iso-inn}, we have $${D_{i}}_{A}(\varphi_{E})/{D_{si}}_{A}(\varphi_{E})\simeq {D_{i}}_{B}(\varphi_{E})/{D_{si}}_{B}(\varphi_{E})\simeq R^{h}.$$ On the other hand, by Theorem \ref{rank_deRham} we have $D_{A}(\varphi_{E})/{D_{si}}_{A}(\varphi_{E})\simeq R^{r}$. Therefore the rank of the $R$-module $\mathrm{Ext}_{A}(E,\hat{\mathbb{G}}_{a})=D_{A}(\varphi_{E})/{D_{i}}_{A}(\varphi_{E})$ is $r-h$ and we are done.
\end{proof}

\color{black}

\section{ Finite generation of ${\bX}_{\infty}(E)$ as an $R\{\phi^{*}\}$-module}
\label{fingenX}
In this section, given any abelian Anderson $A$-module $E$, we will define the $R$-modules $\bH(E)$ and $\bXp(E)$ and will
show that they are free and finitely generated over $R$. As a consequence, 
we will prove that $\bX_\infty(E)$ is a finite and free module over 
$R\{\phi^*\}$.

We need some preparation.
Given any $R$-module $M$ we denote  the $K$-vector space $M\otimes _{R}K$  by 
$M_{K}$.
Let $(E,\varphi_{E})$ be an abelian Anderson $A$-module of dimension $d$ and rank $r$. For any $R$-module $M$, let us fix the notation
	$$
	M_{\phi} = R\otimes_{\phi,R}M.
	$$
Then the semilinear map $\phi^*:{\bX}_{n-1}(E) \rightarrow {\bX}_n(E)$ induces a linear map
${\bX}_{n-1}(E)_{\phi}\rightarrow {\bX}_n(E)$, which we will still denote by $\phi^*$. Recall the following short exact sequence of $A$-module $\pi$-formal 
group schemes
$$0\longrightarrow N^{n} \stk{i}{\longrightarrow} J^{n}E \longrightarrow E \longrightarrow 0.$$
Applying $\mathrm{Hom}_{A}(-,\hat{\mathbb{G}}_{a})$ to the above we get,
$$0\longrightarrow \bX(E)\longrightarrow {\bX}_{n}(E) \longrightarrow {\bX}(N^{n})\xrightarrow{\partial} \mathrm{Ext}_{A}(E,\hat{\mathbb{G}}_{a}), $$
where $\partial$ is the connecting $R$-linear map.
Define the image  $\mathbf{I}_{n}(E):=\partial ({\bX}(N^{n}))$ and let
${\mathbf{I}}(E)= \varinjlim {\mathbf{I}}_n(E)$. Note that  
${\bX}(N^{n})$, $\mathrm{Ext}_{A}(E,\hat{\mathbb{G}}_{a})$ are  finite and
free $R$-modules. Hence ${\bX}_{n}(E), ~\mathbf{I}_{n}(E)$ and ${\mathbf{I}}(E)$ 
 are also finite and free since they are submodules of finite and free $R$-modules and $R$ is a discrete valuation ring. Let $h_{n}=\mathrm{rk}(\mathbf{I}_{n}(E))-\mathrm{rk} (\mathbf{I}_{n-1}(E))$.

We say $m_{l}$ is the \emph{lower splitting} number of $E$ if ${\bX}_{m_{l}}(E)/\bX(E)\neq 0$ but ${\bX}_{m_{l}-1}(E)/\bX(E)= 0$. Similarly, $m_{u}$ is said to be the 
\emph{upper splitting} number if $h_{i}=0$ for all $i\geq m_{u}$ and $h_{m_{u}-1}\neq 0$.
From Theorem \ref{rank_Ext}, we know that  $\mathrm{rk}_R \mathrm{Ext}_{A}(E,\hat{\mathbb{G}}_{a})=r-h$. Hence the lower and splitting numbers exist and in fact $m_{l}\leq s$.\medskip

 A $\d$-character $\Theta \in {\bX}_{n}(E)_{K} $ is called \emph{primitive} if $$\Theta \notin \bX(E)_{K}+ u^{*}{\bX}_{n}(E)_{K}+\phi^{*}({\bX}_{n}(E)_{K})_{\phi}.$$
For $i\geq 1$, let $\mathbb{B}_{i}\subset {\bX}_{i}(E)_{K}$ be a finite subset such that the image of its elements in $$\dfrac{{\bX}_{i}(E)_{K}}{u^{*}{\bX}_{i-1}(E)_{K}+\phi^{*}({\bX}_{i-1}(E)_{K})_{\phi}}$$ are distinct and form a $K$-basis. Such a $\mathbb{B}_{i}$ is called a \emph{primitive basis} of ${\bX}_{i}(E)_{K}$. Let $l_{i}$ be the cardinality of $\mathbb{B}_{i}$. 
 Then using Theorem \ref{frob_bij} and an argument similar to Section $7$ of \cite{BS a}, we can show that  
$$ S_{n}(\mathbb{B}_{1})\cup S_{n}(\mathbb{B}_{2})\cup \ldots \cup S_{n}(\mathbb{B}_{n}) ,$$
where $S_{n}(\mathbb{B}_{i})=\{{\phi^{*}}^{h}\Theta ~|~ \Theta\in \mathbb{B}_{i}, 0\leq h\leq n-i \} $ forms a $K$-basis of $\bX_n(E)_K/\bX(E)_{K}$.
Hence we have $$\mathrm{rk}_K {\bX}_{n}(E)_{K}/\bX(E)_{K}=l_{n}+2l_{n-1}+\ldots + n_{l_{1}}.$$
Consequently,
$$\mathrm{rk}_R {\bX}_{n}(E)/\bX(E)=l_{n}+2l_{n-1}+\ldots + n_{l_{1}}.$$

\begin{lemma}\label{upper_splitting} For all $i\geq 2$, we have  $l_{i}=h_{i-1}-h_{i}$. Hence $\{h_{i}\}$ is a weakly decreasing sequence.
\end{lemma}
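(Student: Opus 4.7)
The plan is to compare two different expressions for $\mathrm{rk}_R {\bX}_n(E)$ and then take successive differences in $n$ to isolate $l_n$.

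First, from the short exact sequence
$$0 \longrightarrow {\bX}_n(E) \longrightarrow {\bX}(N^n) \stk{\partial}{\longrightarrow} \mathbf{I}_n(E) \longrightarrow 0,$$
which is exact by the very definition of $\mathbf{I}_n(E)$, and using that all three modules are finite and free over the DVR $R$, ranks add. By Theorem \ref{basis_canonical_char} we have $\mathrm{rk}_R {\bX}(N^n) = nd$, so writing $R_n := \mathrm{rk}_R \mathbf{I}_n(E)$ (and noting $R_n - R_{n-1} = h_n$ by definition), I obtain
$$\mathrm{rk}_R {\bX}_n(E) = nd - R_n.$$
On the other hand, the primitive-basis description stated just above the lemma gives the independent formula $\mathrm{rk}_R {\bX}_n(E) = \sum_{i=1}^n (n-i+1)\, l_i$.

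Next I would equate these two expressions and take the first difference in $n$. Subtracting the $n-1$ version from the $n$ version yields $d - h_n = \sum_{i=1}^n l_i$ for all $n \geq 1$. Taking the difference once more gives $h_{n-1} - h_n = l_n$ for all $n \geq 2$, which is precisely the claim. Since each $l_n$ is the cardinality of a set and hence non-negative, this instantly implies $h_{n-1} \geq h_n$, so $\{h_i\}$ is weakly decreasing.

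There is no real obstacle: the argument is essentially a double telescoping. The only point requiring a tiny sanity check is consistency at the base of the induction, namely that $d - h_1 = l_1$ coming from the primitive-basis formula at $n=1$ (i.e.\ $\mathrm{rk}_R {\bX}_1(E) = l_1 = d - R_1 = d - h_1$) agrees with the first-difference identity. Once this is verified, the second difference immediately gives the required equality for $i \geq 2$.
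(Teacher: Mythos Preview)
Your proposal is correct and follows essentially the same approach as the paper's proof: both use the short exact sequence together with Theorem~\ref{basis_canonical_char} to get $\mathrm{rk}_R \bX_n(E) = nd - \mathrm{rk}_R \mathbf{I}_n(E)$, combine this with the primitive-basis formula $\mathrm{rk}_R \bX_n(E) = \sum_{i=1}^n (n-i+1)l_i$, and then take successive differences to obtain $h_n = d - (l_1 + \cdots + l_n)$ and hence $h_{n-1} - h_n = l_n$. Your write-up is slightly more explicit about the telescoping and the base case, but the argument is the same.
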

\proof From the exact sequence,  $$0 \longrightarrow {\bX}_{n}(E)/\bX(E)\longrightarrow {\bX}(N^{n})\xrightarrow{\partial} \mathbf{I}_{n}(E) \longrightarrow 0 ,$$
we have $\mathrm{rk}_R (\mathbf{I}_{n}(E)) = nh- \mathrm{rk}_R ({\bX}_{n}(E)/\bX(E))$. Therefore $h_{n}= nh- \mathrm{rk}_R ({\bX}_{n}(E)/\bX(E)))- ((n-1)h- \mathrm{rk}_R ({\bX}_{n-1}(E)/\bX(E))= h-(l_{1}+l_{2}+\ldots +l_{n})$. Hence $ h_{n-1} - h_{n}= l_{n}$.
\qed

\begin{theorem}\label{fg_X_inf_K}
For any abelian Anderson $A$-module $E$ of rank $r$ and dimension $d$, ${\bX}_{\infty}(E)_{K}$  is  freely generated  
by $h$ $\d$-characters of order at most $r-h+1$ as a 
$K\{\phi^*\}$-module.
\end{theorem}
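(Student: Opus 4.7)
The plan is to deduce the theorem from two inputs already set up in the excerpt: the $K$-basis decomposition of $\bX_n(E)_K$ in terms of primitive bases $\mathbb{B}_i$ (the displayed formula preceding Lemma \ref{upper_splitting}), and the rank bound $\mathrm{rk}_R \mathbf{I}(E) \leq r - d$ coming from the inclusion $\mathbf{I}(E) \hookrightarrow \mathrm{Ext}_A(E,\hG)$ together with Theorem \ref{rank_Ext}.

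First I would fix primitive bases $\mathbb{B}_i \subset \bX_i(E)_K$ for each $i \geq 1$ and set $\mathbb{B} = \bigsqcup_i \mathbb{B}_i$. By the $K$-basis statement cited above, for every $n$ the set $\{(\phi^*)^h \Theta : \Theta \in \mathbb{B}_i,\ 0 \leq h \leq n-i,\ 1 \leq i \leq n\}$ is a $K$-basis of $\bX_n(E)_K$. Passing to the direct limit in $n$ yields that $\{(\phi^*)^h \Theta : \Theta \in \mathbb{B},\ h \geq 0\}$ is a $K$-basis of $\bX_\infty(E)_K$. Because every element of $K\{\phi^*\}$ is uniquely a finite sum $\sum_h r_h (\phi^*)^h$ with $r_h \in K$, this $K$-basis property is equivalent to $\mathbb{B}$ being a free generating set of $\bX_\infty(E)_K$ as a $K\{\phi^*\}$-module.

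To count $|\mathbb{B}|$: by Lemma \ref{upper_splitting}, $l_i = h_{i-1} - h_i$ for $i \geq 2$, and from the discussion of splitting numbers one has $l_1 = d - h_1$. Setting $h_0 := d$, the formula $l_i = h_{i-1} - h_i$ holds for all $i \geq 1$. Since $(h_i)_{i \geq 1}$ is weakly decreasing and nonnegative with finite total sum $\sum_i h_i = \mathrm{rk}_R \mathbf{I}(E) \leq r - d$, it stabilizes to $0$, so telescoping yields $|\mathbb{B}| = \sum_i l_i = h_0 = d$.

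Finally, for the order bound, let $m_u$ be the upper splitting number, i.e., the smallest $n$ with $h_n = 0$. By the weakly decreasing property and integrality, $h_i \geq 1$ for $1 \leq i \leq m_u - 1$, so $\sum_{i \geq 1} h_i \geq m_u - 1$. Combined with $\sum_i h_i \leq r - d$ this gives $m_u \leq r - d + 1$, hence every element of $\mathbb{B}$ has order at most $m_u \leq r - d + 1$. The main nontrivial ingredient is the rank bound on $\mathbf{I}(E)$ via Theorem \ref{rank_Ext}; the rest is a combinatorial pigeonhole on $(h_i)$ and the extraction of the $K\{\phi^*\}$-module structure from the existing $K$-basis.
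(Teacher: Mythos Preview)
Your proposal is correct and follows essentially the same approach as the paper: both arguments invoke the primitive-basis decomposition of $\bX_n(E)_K$ to conclude that $\mathbb{B}=\bigcup_i \mathbb{B}_i$ freely generates $\bX_\infty(E)_K$ over $K\{\phi^*\}$, telescope the relations $l_i = h_{i-1}-h_i$ to obtain $\sum_i l_i = d$, and bound $m_u$ by the pigeonhole estimate $m_u - 1 \le \sum_i h_i \le r-d$ coming from $\mathbf{I}(E)\hookrightarrow \mathrm{Ext}_A(E,\hG)$ and Theorem~\ref{rank_Ext}. The only cosmetic difference is that you set $h_0:=d$ and telescope from $i=1$, whereas the paper telescopes from $i=m_l$ and uses $l_{m_l}+h_{m_l}=d$; these are equivalent since $l_i=0$ for $i<m_l$.
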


\proof We know that ${\bX}_{\infty}(E)_{K}/\bX(E)_{K}$ is freely generated by the primitive characters $\mathbb{B}_{1}\cup \mathbb{B}_{2}\cup \ldots \cup \mathbb{B}_{m_{u}}$ as a $K\{\phi^{*}\}$ module. Therefore we have $$l_{m_{l}}+l_{m_{l}+1}+\ldots +l_{m_{u}} = l_{m_{l}}+(h_{m_{l}} - h_{m_{l}+1})+ \ldots + (h_{m_{u}-1} - h_{m_{u}})=l_{{m_{l}}}+h_{{m_{l}}} = h.$$ 
Since $\sum h_{i} \leq r-h $, and $h_{i}$ are weakly decreasing sequence of non-negative integers, we must have $h_{i}=0$ for $i\geq r-h+1$. Therefore $m_{u}\leq r-h+1$. \qed \medskip

Now we define the module of primitive characters as $${\bX}_{\mathrm{prim}}(E):=\underrightarrow{\lim} \dfrac{{\bX}_{n}(E)}{\bX(E)+\phi^{*}({\bX}_{n-1}(E))_{\phi}}.$$ 

\begin{corollary}\label{rk_X_prim} For any abelian Anderson $A$-module $E$ of rank $r$ and dimension $d$, we have $${\bX}_{\mathrm{prim}}(E)_{K}\simeq \dfrac{{\bX}_{m_{u}}(E)_{K}}{\bX(E)_{K}+\phi^{*}(({\bX}_{m_{u}-1}(E))_{K})_{\phi}}.$$ Moreover $\mathbb{B}_{i_{1}}\cup \mathbb{B}_{i_{2}}\ldots \mathbb{B}_{i_{n}} $ is a $K$-basis for ${\bX}_{\mathrm{prim}}(E)_{K}$, where $m_{l}= i_{1}< i_{2}< \ldots < i_{n}= m_{u}$  are the orders of all the primitive characters of $E$. Therefore $\mathrm{rk}_{K}\left({\bX}_{\mathrm{prim}}(E)_{K}\right)=h$.
\end{corollary}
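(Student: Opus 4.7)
My approach is to combine the explicit $K$-basis of $\bX_n(E)_K$ produced in the proof of Theorem \ref{fg_X_inf_K} with the vanishing $l_i = 0$ for $i > m_u$ coming from Lemma \ref{upper_splitting}; the identification with the finite-level quotient and the rank count then fall out together.

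First, recall that $\bX_n(E)_K$ admits the $K$-basis $\bigcup_{i=1}^n S_n(\mathbb{B}_i)$, where $S_n(\mathbb{B}_i) = \{(\phi^*)^h \Theta : \Theta \in \mathbb{B}_i,\ 0 \leq h \leq n - i\}$. Pulling the analogous basis of $\bX_{n-1}(E)_K$ through the $\phi$-semilinear map $\phi^*$, one sees that $\phi^*(\bX_{n-1}(E)_K)_\phi$ equals the $K$-span of precisely the basis vectors $(\phi^*)^h \Theta$ with $h \geq 1$; the relevant injectivity of $\phi^*$ modulo previous orders is Theorem \ref{frob_bij}. Consequently $\bX_n(E)_K / \phi^*(\bX_{n-1}(E)_K)_\phi$ has $K$-basis given by the remaining $h = 0$ vectors, namely $\mathbb{B}_1 \cup \cdots \cup \mathbb{B}_n$, and the transition map induced by $u^*$ sends each such $\Theta$ to itself in the next quotient.

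By Lemma \ref{upper_splitting}, $l_i = h_{i-1} - h_i = 0$ for $i > m_u$, so $\mathbb{B}_i = \emptyset$ for $i > m_u$, and $\mathbb{B}_i = \emptyset$ for $i < m_l$ by definition of $m_l$. Hence the basis $\mathbb{B}_1 \cup \cdots \cup \mathbb{B}_n$ stabilizes for $n \geq m_u$ to $\mathbb{B}_{i_1} \cup \cdots \cup \mathbb{B}_{i_n}$ with $m_l = i_1 < \cdots < i_n = m_u$ the orders carrying primitive characters, and the transition maps in the direct system become isomorphisms beyond $m_u$, so the colimit is attained at $n = m_u$ and gives
$$\bX_{\mathrm{prim}}(E)_K \;\cong\; \bX_{m_u}(E)_K / \phi^*(\bX_{m_u - 1}(E)_K)_\phi$$
with the claimed $K$-basis. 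The rank count $\dim_K \bX_{\mathrm{prim}}(E)_K = l_{m_l} + l_{m_l+1} + \cdots + l_{m_u} = d$ is then read off from the identity established in the proof of Theorem \ref{fg_X_inf_K}. The main obstacle I anticipate is the bookkeeping in the second step: verifying carefully that $\phi^*$ maps the $K$-basis of $\bX_{n-1}(E)_K$ bijectively onto the $h \geq 1$ part of the basis of $\bX_n(E)_K$ without introducing relations. This rests on Theorem \ref{frob_bij} but requires tracking the $\phi$-semilinearity through the base change by $\phi$; once that is secured, everything else is a direct-limit stabilization argument.
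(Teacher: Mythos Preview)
Your proposal is correct and follows the same strategy the paper implicitly invokes by citing Corollary~7.7 of \cite{BS a}: use the explicit $K$-basis $\bigcup_i S_n(\mathbb{B}_i)$ of $\bX_n(E)_K$, identify $\phi^*(\bX_{n-1}(E)_K)_\phi$ with the $h\geq 1$ part, and observe stabilization for $n\geq m_u$ via $l_i=0$. You have in fact written out more than the paper does here, since it gives no in-line argument. One small remark: the injectivity you need is not really an extra application of Theorem~\ref{frob_bij} but is already built into the fact that $\{(\phi^*)^h\Theta\}_{h\geq 1}$ sits inside a basis of $\bX_n(E)_K$; once that basis is granted, the quotient computation and the semilinear bookkeeping are purely formal.
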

\proof Follows similarly as Corollary 7.7 in \cite{BS a}. \qed
\begin{theorem}\label{phi*_bij}
Let $E$ be any abelian Anderson $A$-module of rank $r$ and dimension $d$. For $n\geq m_{u}$, we have the following commutative diagram
 $${
\xymatrix{
{\bX}_{n}(E)/{\bX}_{n-1}(E)\ar@{^{(}->}[d]_{i^{*}} \ar@{^{(}->}[r]^{\phi^{*}}_{\sim} &{\bX}_{n+1}(E)/{\bX}_{n}(E)\ar@{^{(}->}[d]^{i^{*}}  \\
{\bX}(N^{n})/{\bX}(N^{n-1}) \ar[r]^{\mathfrak{f}^{*}}_{\sim} &{\bX}(N^{n+1})/{\bX}(N^{n}) }}
$$
with all the maps bijective . 
\end{theorem}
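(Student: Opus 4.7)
My plan is to strengthen Theorem \ref{frob_bij} by upgrading, for $n \geq m_u$, the three arrows that are only injective (the two $i^*$ and the top $\phi^*$) to bijections, while keeping the bottom $\mathfrak{f}^*$ bijective as already established. The commutativity of the square, combined with Proposition \ref{Fr identity}(i), gives $i^*\phi^* = \mathfrak{f}^* i^*$ on the quotients, since the correction term $\gamma\cdot\Psi_1$ lies in $\bX(N^n)$ and hence vanishes in $\bX(N^{n+1})/\bX(N^n)$.

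First I would verify bijectivity over $K$. By Lemma \ref{upper_splitting} and the definition of $m_u$, one has $h_n = h_{n+1} = 0$ for $n \geq m_u$, hence $l_{n+1} = h_n - h_{n+1} = 0$. The primitive basis description from the proof of Theorem \ref{fg_X_inf_K}, together with $\sum_i l_i = d$ (Corollary \ref{rk_X_prim}), then yields
\[
\dim_K \bX_n(E)_K/\bX_{n-1}(E)_K \;=\; l_1 + \cdots + l_n \;=\; d \;=\; \dim_K \bX_{n+1}(E)_K/\bX_n(E)_K,
\]
matching $\dim_K \bX(N^n)_K/\bX(N^{n-1})_K = d$ from Theorem \ref{basis_canonical_char}. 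Hence each of the three injective arrows becomes a $K$-linear isomorphism.

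To pass from $K$ to $R$, I would apply the snake lemma to the exact sequences $0 \to \bX_m(E) \to \bX(N^m) \stk{\partial}{\to} \mathbf{I}_m(E) \to 0$ for $m = n-1, n$, linked by the pullbacks $u^*$, obtaining
\[
0 \to \bX_n(E)/\bX_{n-1}(E) \stk{i^*}{\to} \bX(N^n)/\bX(N^{n-1}) \to \mathbf{I}_n(E)/\mathbf{I}_{n-1}(E) \to 0.
\]
Thus the left $i^*$ is bijective precisely when $\mathbf{I}_n(E) = \mathbf{I}_{n-1}(E)$, and likewise for the right $i^*$ at level $n+1$. Once both vertical $i^*$'s are bijective, the commutativity $i^*\phi^* = \mathfrak{f}^* i^*$ together with the bijectivity of $\mathfrak{f}^*$ forces the top $\phi^*$ to be bijective as well.

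The main obstacle is upgrading the rank equality $h_n = 0$ to an honest equality of $R$-submodules $\mathbf{I}_{n-1}(E) = \mathbf{I}_n(E)$ inside $\mathrm{Ext}_A(E,\hG)$. I would carry this out by a diagram chase exploiting $\mathfrak{f}^*$: given $\bar\Theta' \in \bX_{n+1}(E)/\bX_n(E)$, produce the unique $\bar\psi \in \bX(N^n)/\bX(N^{n-1})$ with $\mathfrak{f}^*\bar\psi = i^*\bar\Theta'$; then use the compatibility $\partial \circ u^* = \partial$ between consecutive levels of jet spaces together with $\partial \circ i^* = 0$ on $\bX_{n+1}(E)$ to conclude $\partial\bar\psi = 0$ in $\mathbf{I}_n(E)/\mathbf{I}_{n-1}(E)$, and finally lift $\bar\psi$ through the snake lemma sequence to $\bar\Theta \in \bX_n(E)/\bX_{n-1}(E)$ satisfying $\bar\Theta' = \phi^*\bar\Theta$.
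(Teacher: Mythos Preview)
Your setup is essentially the same as the paper's: the snake lemma applied to the two short exact sequences yields
\[
0 \longrightarrow \bX_n(E)/\bX_{n-1}(E) \xrightarrow{\ i^*\ } \bX(N^n)/\bX(N^{n-1}) \longrightarrow \mathbf{I}_n(E)/\mathbf{I}_{n-1}(E) \longrightarrow 0,
\]
so bijectivity of the vertical $i^*$'s is equivalent to $\mathbf{I}_n(E)/\mathbf{I}_{n-1}(E)=0$, and then the commuting square with $\mathfrak{f}^*$ bijective forces $\phi^*$ to be bijective. Your detour through $K$-dimensions is harmless but unnecessary.

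The gap is in your final paragraph. Your proposed diagram chase requires knowing something about how $\partial$ interacts with $\mathfrak{f}^*$: from $\mathfrak{f}^*\bar\psi = i^*\bar\Theta'$ you can only conclude that $\partial(\mathfrak{f}^*\bar\psi)$ vanishes in $\mathbf{I}_{n+1}(E)/\mathbf{I}_n(E)$, not that $\partial(\bar\psi)$ vanishes in $\mathbf{I}_n(E)/\mathbf{I}_{n-1}(E)$. The compatibilities you invoke, $\partial\circ u^* = \partial$ and $\partial\circ i^*=0$, involve $u^*$, not $\mathfrak{f}^*$, and in general no map $\mathbf{I}_{n-1}(E)\to \mathbf{I}_n(E)$ intertwining $\partial$ with $\mathfrak{f}^*$ is available (indeed, the paper later uses precisely that such a $\bar{\mathfrak{f}}^*$ exists only when $\gamma=0$).

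The paper resolves the obstacle much more simply, without any chase. Since $\mathbf{I}_n(E)$ is a finitely generated free $R$-module (being a submodule of the free module $\mathrm{Ext}_A(E,\hat{\mathbb G}_a)$ over a DVR), the short exact sequence $0\to \bX_n(E)\to \bX(N^n)\to \mathbf{I}_n(E)\to 0$ splits. A choice of section induces a section of the quotient sequence above, exhibiting $\mathbf{I}_n(E)/\mathbf{I}_{n-1}(E)$ as a direct summand of the free rank-$d$ module $\bX(N^n)/\bX(N^{n-1})$. Hence $\mathbf{I}_n(E)/\mathbf{I}_{n-1}(E)$ is itself free, of rank $h_n$. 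For $n\geq m_u$ one has $h_n=0$, so this free module is zero, and you are done.
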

\proof From Theorem \ref{frob_bij} we already have $i^{*}$ and $\phi^{*}$ are injective and $\mathfrak{f}^{*}$ is bijective. Consider the exact sequence of finite free $R$-modules
$$0 \longrightarrow {\bX}_{n}(E)/\bX(E)\longrightarrow {\bX}(N^{n})\xrightarrow{\partial} \mathbf{I}_{n}(E) \longrightarrow 0.$$
Since $\mathbf{I}_{n}(E)$ is free, there is a section $s_{n}:\mathbf{I}_{n}(E)\longrightarrow {\bX}(N^{n})$. Moreover, this section implies that the following exact sequence of $R$-modules
$${
\xymatrix{
0\ar[r]&{\bX}_{n}(E)/{\bX}_{n-1}(E) \ar[r]^{i^{*}} &{\bX}(N^{n})/{\bX}(N^{n-1})\ar[r]^{\partial} &\mathbf{I}_{n}E/\mathbf{I}_{n-1}E\ar@/^1pc/[l]^{s}\ar[r]& 0 \\
}}
$$
splits. Similar fact as in Proposition 5.2 in \cite{BS a} implies that ${\bX}(N^{n})/{\bX}(N^{n-1})$ is a free module of rank $d$. Hence $\mathbf{I}_{n}(E)/
\mathbf{I}_{n-1}(E)$ is also free being a sub-module of a free module over a discrete valuation ring. On the other hand,  $$\mathrm{rk}\left(\mathbf{I}_{n}(E)
/\mathbf{I}_{n-1}(E)\right)=\mathrm{rk}(\mathbf{I}_{n}(E))-\mathrm{rk} (\mathbf{I}_{n-1}(E))=h_{n}.$$
Since $h_{n}=0$ for all $n\geq m_{u}$, we have $\mathbf{I}_{n}(E)/\mathbf{I}_{n-1}(E)=0$ and $i^{*}$ is bijective. Since $i^*$ and $\fra^*$ are bijective we 
have that $\phi^{*}$ also bijective. 
\qed

For each $i$, we define $\bb{D}_i$ be a finite subset of elements in ${\bX}_{i}(E)$ such that 
their images in 
$$\dfrac{{\bX}_{i}(E)}{u^{*}{\bX}_{i-1}(E)+\phi^{*}({\bX}_{i-1}(E))_{\phi}}$$ 
form a minimal generating set.
Since $R$ is a discrete valuation ring and the modules are finite over $R$, the 
cardinality of such a minimal generating set is well-defined.
We will call such a $\mathbb{D}_{i}$ a \emph{primitive generating set} of ${\bX}_{i}(E)$. 
Let us define the subset $$S_{n}(\mathbb{D}_{i})=\{{\phi^{*}}^{h}\Theta ~|~\Theta\in \mathbb{D}_{i}, 0\leq h\leq n-i \} .$$
\begin{lemma} 
\label{SnR}
The  $R$-module $\bX_{n}(E)/\bX(E)$ can be generated by
$$ S_{n}(\mathbb{D}_{1})\cup S_{n}(\mathbb{D}_{2})\cup \ldots \cup S_{n}(\mathbb{D}_{n}) .$$
\end{lemma}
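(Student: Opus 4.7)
The plan is to proceed by induction on $n$, with the base case $n=0$ (equivalently any $n < m_l$) being immediate since $\bX_0(E) = \bX(E) = 0$ by Theorem \ref{zero char}, so the empty $R$-span vacuously suffices.

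For the inductive step, assume $\bX_{n-1}(E)$ is generated as an $R$-module by $\bigcup_{i=1}^{n-1} S_{n-1}(\mathbb{D}_i)$. By the defining property of the primitive generating set $\mathbb{D}_n$ (noting that $R$ is a DVR, so minimal generating sets of finitely generated $R$-modules are well-behaved), the images of $\mathbb{D}_n$ generate the quotient $\bX_n(E)/(u^*\bX_{n-1}(E) + \phi^*(\bX_{n-1}(E))_\phi)$ over $R$. Equivalently,
$$
\bX_n(E) \;=\; u^*\bX_{n-1}(E) \;+\; \phi^*(\bX_{n-1}(E))_\phi \;+\; R\cdot \mathbb{D}_n,
$$
so it suffices to show each of the three summands lies in the $R$-span of $\bigcup_{i=1}^n S_n(\mathbb{D}_i)$. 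The third summand is trivial since $\mathbb{D}_n \subseteq S_n(\mathbb{D}_n)$ (take $h=0$). For the first, the inclusion $S_{n-1}(\mathbb{D}_i) \subseteq S_n(\mathbb{D}_i)$ is immediate from the definition (the bound $0 \le h \le n-1-i$ is subsumed by $0 \le h \le n-i$), so the inductive hypothesis applied through the natural inclusion $u^*$ handles this case directly.

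The remaining and only delicate point is the middle summand $\phi^*(\bX_{n-1}(E))_\phi$, which must be treated with care because $\phi^*$ is only $R$-semilinear on $\bX_{n-1}(E)$ (it is $R$-linear after the twist $(-)_\phi$). A typical element is a finite sum $\sum_k r_k \phi^*\Psi_k$ with $r_k \in R$ and $\Psi_k \in \bX_{n-1}(E)$. Expanding each $\Psi_k$ via the inductive hypothesis as an $R$-linear combination of elements $\phi^{*h}\Theta$ with $\Theta \in \mathbb{D}_i$ and $0 \le h \le n-1-i$, and then pushing scalars through $\phi^*$ using the relation $\phi^*(s\Theta) = \phi(s)\phi^*\Theta$, one expresses the original element as an $R$-linear combination of terms $\phi^{*(h+1)}\Theta$ with $1 \le h+1 \le n-i$. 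Each such term lies in $S_n(\mathbb{D}_i)$ by definition, so the middle summand is also contained in the desired $R$-span. Combining the three cases completes the induction, and the only real content to verify is the bookkeeping on index ranges under the shift $h \mapsto h+1$, which works out precisely because the inductive bound was $h \le n-1-i$.
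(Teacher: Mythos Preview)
Your proof is correct and follows essentially the same approach as the paper's: both argue by induction on $n$, decompose $\bX_n(E)$ as $u^*\bX_{n-1}(E) + \phi^*(\bX_{n-1}(E))_\phi + R\cdot\mathbb{D}_n$, and verify that each piece lies in the $R$-span of $\bigcup_i S_n(\mathbb{D}_i)$ via the containments $\mathbb{S}_{n-1}\cup\phi^*\mathbb{S}_{n-1}\cup\mathbb{D}_n\subseteq\mathbb{S}_n$. Your treatment of the semilinearity of $\phi^*$ in the middle summand is slightly more explicit than the paper's, but the argument is the same.
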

\proof We will proceed by induction on $n$. For $n=1$ it 
is clear and the definition of $\bb{D}_1$. 
Let us denote $$ \mathbb{S}_{i}=S_{i}(\mathbb{D}_{1})\cup S_{i}(\mathbb{D}_{2})\cup \ldots \cup S_{i}(\mathbb{D}_{i}).$$ We assume that $\mathbb{S}_{n-1}$ generates $\bX_{n-1}(E)/\bX(E)$ as an $R$-module. Consider the short exact sequence of  $R$-modules
$$0\longrightarrow \dfrac{u^{*}{\bX}_{n-1}(E)}{\bX(E)}+\dfrac{\phi^{*}(
{\bX}_{n-1}(E)}{\bX(E))_{\phi}}\longrightarrow\dfrac{{\bX}_{n}(E)}{\bX(E)}\longrightarrow\dfrac{{\bX}_{n}(E)}{(u^{*}{\bX}_{n-1}(E)+\phi^{*}(
{\bX}_{n-1}(E))_{\phi})}\longrightarrow 0.$$
By the induction hypothesis, $u^*\bX_{n-1}(E)/\bX(E) + \phi^*(\bX_{n-1}(E)/\bX(E))_\phi$
is generated by $\bb{S}_{n-1} \cup \phi^*\bb{S}_{n-1}$. And the quotient 
$R$-module $\dfrac{\bX_n(E)}{(u^*\bX_{n-1}(E)+ \phi^*(\bX_{n-1}(E))_\phi)}$
is generated by the image of $\bb{D}_n$ by definition. Hence $\bX_n(E)/\bX(E)$ is 
generated by 
$$\mathbb{D}_{n}\cup \mathbb S_{n-1}\cup \phi^{*}(\mathbb{S}_{n-1})\subset \mathbb{S}_{n}.$$
and we are done.
\qed


\begin{theorem}\label{X_prim} 
The $R$-module $\bXp(E)$ is free of rank $h$.
\end{theorem}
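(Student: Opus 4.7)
The plan is to establish that $\bXp(E)$ is finitely generated, torsion-free, and of rank exactly $d$ over the discrete valuation ring $R$; freeness of rank $d$ then follows from the structure theorem for finitely generated modules over a PID.

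First I would show that the colimit defining $\bXp(E)$ stabilizes at level $m_u$. Write $Q_n := \bX_n(E)/\phi^*(\bX_{n-1}(E))_\phi$, with transitions $u^*: Q_n \to Q_{n+1}$ induced by the projection $u$. For surjectivity when $n \geq m_u$, Theorem \ref{phi*_bij} gives $\bX_{n+1}(E) = u^*\bX_n(E) + \phi^*(\bX_n(E))$, so every class in $Q_{n+1}$ is already represented inside $u^*\bX_n(E)$. For injectivity, if $u^*\Theta = \phi^*\Psi$ with $\Theta,\Psi \in \bX_n(E)$, then the image of $\phi^*\Psi$ in $\bX_{n+1}(E)/\bX_n(E)$ is zero, so by injectivity of $\phi^*$ on the graded pieces in Theorem \ref{frob_bij} one obtains $\Psi \in \bX_{n-1}(E)$; writing $\Psi = u^*\Psi'$ and invoking $u^*\phi^* = \phi^*u^*$ together with the injectivity of $u^*$ on characters gives $\Theta = \phi^*\Psi' \in \phi^*(\bX_{n-1}(E))_\phi$. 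Hence $\bXp(E) \cong Q_{m_u}$, a quotient of the finitely generated $R$-module $\bX_{m_u}(E)$, so $\bXp(E)$ is finitely generated over $R$.

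For torsion-freeness, the cleanest route is to exploit the short exact sequence \eqref{Xpshort}, which realizes $\bXp(E)$ as a submodule of $\bH(E)$. Granting the companion assertion in the main theorem of the introduction that $\bH(E)$ is a finitely generated free $R$-module, the fact that $R$ is a PID forces every submodule --- in particular $\bXp(E)$ --- to be free as well. A more self-contained alternative would construct the candidate map $\Upsilon: \bXp(E) \to \mathrm{Lie}(E)^* \cong R^d$ appearing in diagram \eqref{diag-crys-limit-intro} via the Teichmuller section $v: E \to J^n E$ of Section \ref{splitting_J^n}: to $\Theta \in \bX_n(E)$ one associates the differential at the identity of the $\FF_q$-linear morphism $v^*\Theta: E \to \hG$. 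Well-definedness on $\bXp(E)$ would follow because $\phi \circ v$ factors through a Frobenius-type endomorphism of $E$ with vanishing tangent map, so $\Upsilon(\phi^*\Psi) = 0$. The main technical obstacle in this alternative route is the injectivity of $\Upsilon$, namely showing that a character whose Teichmuller pullback has zero differential at the identity must already lie in $\phi^*(\bX_{n-1}(E))_\phi$; this would invoke Lemma \ref{hartle_lemma} and the description of $N^1$ from Theorem \ref{Iso_N^1}.

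Finally, Corollary \ref{rk_X_prim} pins down $\mathrm{rk}_K \bXp(E)_K = d$. Together with finite generation and torsion-freeness over the DVR $R$, this forces $\bXp(E)$ to be free of rank exactly $d$, completing the proof.
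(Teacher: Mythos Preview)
Your finite-generation argument via stabilization of the colimit at level $m_u$ is correct and in fact slightly sharper than the paper's: you show directly that the transition maps $Q_n \to Q_{n+1}$ are isomorphisms for $n \geq m_u$, whereas the paper instead exhibits an explicit finite generating set by showing the successive ``primitive'' quotients vanish past $m_u$ and then invoking Lemma~\ref{SnR}. Both routes work; yours gives the identification $\bXp(E) \cong Q_{m_u}$ as an $R$-module, not only after tensoring with $K$.

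The torsion-freeness step, however, has a genuine gap. Your route (a), embedding $\bXp(E)$ into $\bH(E)$ and appealing to freeness of the latter, is circular in the paper's logical structure: the proof of Theorem~\ref{finite_dim_thm} (freeness of $\bH(E)$) begins by \emph{invoking} Theorem~\ref{X_prim}, deducing freeness of $\bH(E)$ from the short exact sequence once $\bXp(E)$ and $\mathbf{I}(E)$ are known to be free. Your route (b) via injectivity of $\Upsilon$ is not merely a technical obstacle but is in general false: Theorem~\ref{finite_dim_thm} itself shows that $\Upsilon_K$ is injective if and only if the matrix $P$ of leading coefficients is invertible, which is a nontrivial condition not established (and not needed) here.

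The paper instead argues torsion-freeness directly and elementarily: if $\pi f = \phi^* g$ in $\bX_\infty(E)$, then writing $g$ as a restricted power series and expanding $\phi^* g$ shows every coefficient of $g$ is divisible by $\pi$; since $\bX_\infty(E)$ is itself $\pi$-torsion-free (being a submodule of coordinate rings over $R$), one obtains $g = \pi \tilde g$ with $\tilde g \in \bX_\infty(E)$ and hence $f = \phi^* \tilde g$. This one-paragraph computation is the missing ingredient.
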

 \proof Since $R$ is a discrete valuation ring, it is enough to prove that ${\bX}_{\mathrm{prim}}(E)$ is finitely generated and $\pi$-torsion free.
We will now show ${\bX}_{\mathrm{prim}}(E)$ is a finitely generated $R$-module.
By Theorem \ref{phi*_bij}, for $n\geq m_{u}$, we have the following commutative diagram
 $${
\xymatrix{
{\bX}_{n}(E)/{\bX}_{n-1}(E)\ar[d]_{p_{n}} \ar[r]_{\phi^{*}}^{\sim} &{\bX}_{n+1}(E)/{\bX}_{n}(E)\ar[d]^{p_{n+1}}  \\
\dfrac{{\bX}_{n}(E)}{u^{*}{\bX}_{n-1}(E)+\phi^{*}({\bX}_{n-1}(E))_{\phi}} \ar[r]^{\bar{\phi}^{*}} &\dfrac{{\bX}_{n+1}(E)}{u^{*}{\bX}_{n}(E)+\phi^{*}({\bX}_{n}(E))_{\phi}}}}
$$
Since, the vertical map are surjective and $\phi^{*}$ is bijective, we have $\bar{\phi}^{*}$ also surjective. But note that $\bar{\phi}^{*}$ is a zero map. Hence this implies $\dfrac{{\bX}_{n+1}(E)}{u^{*}{\bX}_{n}(E)+\phi^{*}({\bX}_{n}(E))_{\phi}}$ is zero for $n\geq m_{u}$. 
Therefore from Lemma \ref{SnR} it follows that, $\bX_{\infty}(E)$ is generated by $$\mathbb{D}_{1}\cup \mathbb{D}_{2}\cup \ldots \cup \mathbb{D}_{m_{u}}$$ 
as an $R\{\phi^{*}\}$-module. Hence their images generate ${\bX}_{\mathrm{prim}}(E)$ as an $R$-module.

Now we will show that ${\bX}_{\mathrm{prim}}(E)$ is $\pi$-torsion free.
Let $\pi [f] =0$ for some $[f] \in \bX_{\mathrm{prim}}(E)$. This means there exists a $g=\displaystyle{\sum_{I}}a_{I}\mathbf{x}^{I} \in \bX_{\infty}(E)$ such that 
\begin{equation*}
\pi f(\mathbf{x})=\phi^{*}g (\mathbf{x})= g(\phi(\mathbf{x}))=\displaystyle{\sum_{I}}a_{I}\mathbf{x}^{qI}+\pi h(\mathbf{x}).
\end{equation*}
The above equation shows that each $a_{I}$ are divisible by $\pi$. Therefore 
$\tilde{g}(\bx):=\frac{g(\mathbf{x})}{\pi}\in \bX_{\infty}(E)$. Since $\bX_{\infty}(E)$ is free and hence $\pi$-torsion free, we obtain $f=\phi^{*}\tilde{g}$,
that is $[f] =0$. 
Hence ${\bX}_{\mathrm{prim}}(E)$ is free and by Corollary \ref{rk_X_prim} the rank has to be $h$.\qed

\begin{corollary}
\label{FinGenX}
For any abelian Anderson $A$-module of rank $r$ and dimension $d$, ${\bX}_{\infty}(E)$  is  freely generated  
by $h$ $\d$-characters of order at most $r-h+1$ as an 
$R\{\phi^*\}$-module.
\end{corollary}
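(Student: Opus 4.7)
The plan is to lift an $R$-basis of $\bXp(E)$ to $d$ primitive $\d$-characters that freely generate $\bX_\infty(E)$ as an $R\{\phi^*\}$-module, with freeness descending from the $K$-version in Theorem \ref{fg_X_inf_K}.

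By Theorem \ref{X_prim}, $\bXp(E) = \bX_\infty(E)/\phi^*(\bX_\infty(E))_\phi$ is free of rank $d$ over $R$. The order filtration on $\bX_\infty(E)$ descends to a filtration on $\bXp(E)$ whose $n$-th graded piece is $\bX_n(E)/(u^*\bX_{n-1}(E) + \phi^*(\bX_{n-1}(E))_\phi)$; by Theorem \ref{phi*_bij} this graded piece vanishes for $n > m_u$, and the proof of Theorem \ref{fg_X_inf_K} yields $m_u \leq r-d+1$. I choose $\Theta_1, \ldots, \Theta_d \in \bX_{m_u}(E)$, distributed across orders $\leq m_u$, whose images form an $R$-basis of $\bXp(E)$ compatible with this filtration.

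Next I would show these $\Theta_j$ generate $\bX_\infty(E)$ as an $R\{\phi^*\}$-module by upward induction on $n$, with base case $\bX_0(E)=0$ from Theorem \ref{zero char}. Given $\psi \in \bX_n(E)$, its image in the $n$-truncated filtration of $\bXp(E)$ is $\sum c_j\overline{\Theta}_j$ summed over those $j$ with $\mathrm{ord}(\Theta_j)\leq n$, so $\psi - \sum c_j\Theta_j$ lies in $\bX_n(E)\cap \phi^*(\bX_\infty(E))_\phi$. Using the injectivity of $\phi^*$ on the graded quotients $\bX_n(E)/\bX_{n-1}(E)$ from Theorem \ref{frob_bij}, this intersection reduces to $\phi^*(\bX_{n-1}(E))_\phi$; hence $\psi - \sum c_j\Theta_j = \phi^*(\eta)$ for some $\eta \in \bX_{n-1}(E)$, and the induction hypothesis handles $\eta$.

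For freeness, suppose $\sum_{j,k} r_{j,k}(\phi^*)^k\Theta_j = 0$ with $r_{j,k}\in R$, and tensor with $K$. Since the $\Theta_j$ lift an $R$-basis of $\bXp(E)$ respecting the filtration, their images in $\bXp(E)_K$ form a primitive $K$-basis, and Theorem \ref{fg_X_inf_K} then asserts that any primitive basis freely generates $\bX_\infty(E)_K$ as a $K\{\phi^*\}$-module; therefore each $r_{j,k}=0$ in $K$, and hence in $R$ via the embedding $R\hookrightarrow K$. The main technical point is in the generation step: ensuring via the injectivity in Theorem \ref{frob_bij} that $\bX_n(E)\cap \phi^*(\bX_\infty(E))_\phi$ coincides with $\phi^*(\bX_{n-1}(E))_\phi$, so that the auxiliary element $\eta$ has strictly smaller order and the induction closes cleanly.
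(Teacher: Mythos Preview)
Your approach is correct and follows essentially the same outline as the paper's: lift a generating set of $\bXp(E)$ to $d$ elements, argue they generate $\bX_\infty(E)$ over $R\{\phi^*\}$, and deduce freeness from the $K$-version in Theorem~\ref{fg_X_inf_K}. The paper works with the explicit sets $\mathbb{D}_1\cup\cdots\cup\mathbb{D}_{m_u}$ and leans on Lemma~\ref{SnR} for generation, then extracts a minimal subset of size $d$; you instead pick an arbitrary filtration-compatible basis and prove generation by a direct induction on order, which is arguably cleaner and makes explicit the key identity $\bX_n(E)\cap\phi^*(\bX_\infty(E))_\phi=\phi^*(\bX_{n-1}(E))_\phi$ that the paper leaves implicit. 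One small imprecision: Theorem~\ref{fg_X_inf_K} does not literally assert that \emph{any} primitive basis freely generates $\bX_\infty(E)_K$, only that some set of $d$ primitives does; what you actually need is that your $\Theta_j$ generate over $K\{\phi^*\}$ (which your induction gives after tensoring) and that $d$ generators of a free rank-$d$ module over the noetherian Ore domain $K\{\phi^*\}$ are automatically a free basis.
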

\proof Since $ \mathbb{D}_{1}\cup\mathbb{D}_{2}\cup \ldots \cup \mathbb{D}_{m_{u}}$ generates ${\bX}_{\mathrm{prim}}(E)$ as an $R$-module, we can choose a minimal generating subset and its cardinality has to be $h$ because $R$ is a 
discrete valuation ring and by Theorem \ref{X_prim}, ${\bX}_{\mathrm{prim}}(E)$ has rank $h$.
Hence $\bX_{\infty}(E)$ will be freely generated by $h$ $\d$-characters of order upto $m_{u}\leq r-h+1$ as in Theorem \ref{fg_X_inf_K}.
\qed
\section{Finiteness of the $R$-module ${\bH}(E)$}\label{ziso}

Here we recall some of the constructions done in \cite{BS b} and adapt them
to our setting. 
Recall the exact sequence $$0\longrightarrow N^{n}\longrightarrow J^{n}E\longrightarrow E\longrightarrow 0.$$ The Teichmuller section $v:E\longrightarrow J^{n}E$ induces an splitting in the Lie algebras
\begin{align*}
 \xymatrix{
0  \ar[r] &\mathrm{Lie} (N^{n})\ar[r] &\mathrm{Lie} (J^{n}E) \ar[r] \ar@/^1pc/[l]^{s_{\mathrm{Witt}}} &\mathrm{Lie} (E)  \ar[r]&0}
\end{align*}
where $s_{\mathrm{Witt}}(\mathbf{x}_{0},\ldots,\mathbf{x}_{n})=(\mathbf{x}_{1},\ldots \mathbf{x}_{n})$ is the corresponding retraction. Given any $\Psi \in {\bX}(N^{n})$ we have the following push out diagram from \eqref{pushout}
\begin{align*}
\xymatrix{
0  \ar[r] &N^{n} \ar[d]^{\Psi} \ar[r] &J^{n}E \ar[d]^{e_{\Psi}} \ar[r] &E \ar@{=}[d] \ar[r]&0 \\
0 \ar[r] &\hat{\mathbb{G}}_{a} \ar[r] &E_{\Psi}^{*} \ar[r] & E \ar[r] &0}
\end{align*}
where $E_{\Psi}^{*}=(J^{n}E\times \hat{\mathbb{G}}_{a})/\Gamma(N^{n})$ and $\Gamma(N^{n})=\{i(\mathbf{z}),-\Psi(\mathbf{z})~|~\mathbf{z}\in N^{n}\}\subset J^{n}E\times \hat{\mathbb{G}}_{a}.$
Let $s_{\Psi}$ denote the induced splitting of Lie algebras of the 
push out extension
\begin{align*}
 \xymatrix{
0  \ar[r] &\mathrm{Lie} (\hat{\mathbb{G}}_{a})\ar[r] &\mathrm{Lie} (E_{\Psi}^{*}) \ar[r]\ar@/^1pc/[l]^{s_{\Psi}} &\mathrm{Lie} (E)  \ar[r]&0}.
\end{align*}
Define $\tilde{s}_{\Psi}:\mathrm{Lie}(J^{n}E)\times \mathrm{Lie}\hat{\mathbb({G}_{a}})\longrightarrow \mathrm{Lie}(\hat{\mathbb{G}_{a}})$ as
\begin{align*}
\tilde{s}_{\Psi}(\mathbf{x},y)=D\Psi(s_{\mathrm{Witt}}(\mathbf{x}))+y 
\end{align*}
and consequently we obtain $\tilde{s}_{\Psi}|_{\mathrm{Lie}(\Gamma(N^{n}))}=0$. Therefore 
\begin{align}\label{s_Witt_def}
s_{\Psi}[\mathbf{x},y]=D\Psi(s_{\mathrm{Witt}}(\mathbf{x}))+y 
\end{align}
is a well defined retraction.
Thus we get the following commutative diagram
\begin{equation}
\xymatrix{
		0 \ar[r] & {\bX}_{n}(E)/\bX(E) \ar[r] \ar[d]^{\tilde{\Upsilon}} & 
		{\bX}(N^{n}) \ar[r] \ar[d]^{\Psi\mapsto (E_{\Psi}^{*},s_{\Psi})}& 
		\mathrm{Ext}_{A}(E,\hat{\mathbb{G}}_{a})  \ar@{=}[d]& \\
		0 \ar[r] &\mathrm{Lie} (E)^{*} \ar[r] & \mathrm{Ext}^{\#}_{A}(E,\hat{\mathbb{G}}_{a}) \ar[r] & 
\mathrm{Ext}_{A}(E,\hat{\mathbb{G}}_{a}) \ar[r] & 0.
	}
\end{equation}\label{Char_to_Hdg_fil_1}
\noindent Let $\mathbf{x}=\bx_0 = (x^{1}, x^{2}, \dots, x^{d})$ be a coordinate function of $E$ centered around the identity section and $\left(\mathbf{x}_{0},\mathbf{x}_{1},\ldots, \mathbf{x}_{n}\right)$ be the induced
 coordinate functions on $J^{n}E$, where $\mathbf{x}_{i}=(x^{1}_{i}, x^{2}_{i} 
\dots x^{d}_{i})$ are the $i$th Witt coordinates associated to
$\bx$.
 Therefore given any $\Theta \in {\bX}_{n}(E)$, we have 
\begin{equation}
D\Theta=(P_{0},\ldots,P_{n}),\quad \mathrm{where}~
P_{i}=\begin{pmatrix}
           {\partial \Theta}/{\partial x^{1}_{i}} \\
           {\partial \Theta}/{\partial x^{2}_{i}} \\
           \vdots \\
          {\partial \Theta}/{\partial x^{h}_{i}}
           \end{pmatrix}.
\end{equation}\label{Derivative_matrix}
The following proposition describes the map $\tilde{\Upsilon}:{\bX}_{n}(E)\longrightarrow\mathrm{Lie}(E)^{*}$ in diagram \eqref{Char_to_Hdg_fil_1} explicitly.

\begin{proposition} Let $\Theta$ be a character in ${\bX}_{n}(E)$ and $\Psi=i^{*}\Theta \in {\bX}(N^{n})$,
\begin{itemize}
\item[(a)] The map $\tilde{\Upsilon}:{\bX}_{n}(E)\longrightarrow \mathrm{Lie}(E)^{*}$ in diagram \eqref{Char_to_Hdg_fil_1} is given by $$\Theta \mapsto -D(\Theta \circ v).$$
\item[(b)] Let $\bar{\Theta}=\phi^{*}\Theta $, then $-D(\bar{\Theta}\circ v)=0$.
\item[(c)]If $\Psi\in i^{*}\phi^{*}({\bX}_{n}(E))$, then the class $(E_{\Psi}^{*},s_{\Psi})$ is trivial in $\mathrm{Ext}^{\#}_{A}(E,\hat{\mathbb{G}}_{a})$.
\end{itemize}
\end{proposition}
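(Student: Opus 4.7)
The plan is to establish parts (a), (b), (c) of Proposition \ref{diagram} in sequence, with (a) providing the main computation and (b), (c) following as short consequences of (a) combined with the fact that $R$ has characteristic $p$.

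For (a), the core observation is that because $\Theta \in \bX_n(E)$ extends $\Psi = i^*\Theta$ to all of $J^n E$, the pushout $E_\Psi^*$ is canonically trivial as an extension of $A$-module schemes. Explicitly, one exhibits the $A$-linear retraction
\[
\rho_\Theta \colon E_\Psi^* \longrightarrow \hat{\mathbb{G}}_a, \qquad [\mathbf{w}, y] \longmapsto \Theta(\mathbf{w}) + y,
\]
which is well-defined because $\Theta(i(\mathbf{z})) - \Psi(\mathbf{z}) = 0$. Hence the class $(E_\Psi^*, s_\Psi) \in \mathrm{Ext}^{\#}_A(E,\hat{\mathbb{G}}_a)$ lies in the kernel $\mathrm{Lie}(E)^*$ of the forgetful map, and this element equals precisely the difference between the Hodge retraction $s_\Psi$ and the algebraic Lie retraction $D\rho_\Theta$. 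Using the explicit formula $s_\Psi[\mathbf{x}, y] = D\Psi(\switt(\mathbf{x})) + y$ from \eqref{s_Witt_def} together with the factorization $D\Psi = D\Theta \circ Di$, where $Di$ embeds $\mathrm{Lie}(N^n)$ into $\mathrm{Lie}(J^n E)$ as the subspace with vanishing $0$-th Witt coordinate, the difference simplifies to
\[
D\Theta(0, \mathbf{x}_1, \ldots, \mathbf{x}_n) - D\Theta(\mathbf{x}_0, \mathbf{x}_1, \ldots, \mathbf{x}_n) = -D\Theta\bigl(Dv(\mathbf{x}_0)\bigr),
\]
which depends only on the image $\mathbf{x}_0 \in \mathrm{Lie}(E)$ and equals $-D(\Theta \circ v)(\mathbf{x}_0)$. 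This identifies $\tilde\Upsilon(\Theta) = -D(\Theta\circ v)$.

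For (b), the key identity is $\phi \circ v = v \circ F_{\hat q}$, where $F_{\hat q}$ is the $\hat q$-th power Frobenius endomorphism of $E$. This follows from the fact that on $\pi$-typical Witt vectors $W_n(B)$ over a characteristic-$p$ ring, the Witt Frobenius $F$ sends a Teichm\"uller lift $[a]$ to $[a^{\hat q}]$, as one verifies on ghost components (the ghost vector $(a, a^{\hat q}, a^{\hat q^2}, \ldots)$ shifts to $(a^{\hat q}, a^{\hat q^2}, \ldots)$, the ghost of $[a^{\hat q}]$). Consequently $\bar\Theta \circ v = (\Theta \circ v) \circ F_{\hat q}$, and since $R$ has characteristic $p$ the differential $DF_{\hat q}$ vanishes, forcing $D(\bar\Theta \circ v) = 0$. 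For (c), suppose $\Psi = i^*\phi^*\Theta$ with $\Theta \in \bX_n(E)$; set $\bar\Theta := \phi^*\Theta \in \bX_{n+1}(E)$ so that $\Psi = i^*\bar\Theta$. Applying (a) at level $n+1$ gives $\tilde\Upsilon(\bar\Theta) = -D(\bar\Theta \circ v)$, which vanishes by (b). Thus $(E_\Psi^*, s_\Psi)$ maps to zero in $\mathrm{Ext}^{\#}_A(E,\hat{\mathbb{G}}_a)$, i.e., it represents the trivial class.

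The main obstacle is the careful bookkeeping in (a): one must correctly identify the kernel embedding $\mathrm{Lie}(E)^* \hookrightarrow \mathrm{Ext}^{\#}_A$ (comparing the two Lie retractions rather than sections, and keeping track of signs in the pushout convention $\Gamma(N^n) = \{(i(\mathbf{z}), -\Psi(\mathbf{z}))\}$) in order to recover the precise formula $-D(\Theta \circ v)$ rather than its negative or a shifted variant. Once (a) is settled, parts (b) and (c) are essentially formal.
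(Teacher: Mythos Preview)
Your proof is correct and follows essentially the same approach as the paper. For (a), both you and the paper exploit the $A$-linear retraction $[\mathbf{w},y]\mapsto \Theta(\mathbf{w})+y$ trivializing $E_\Psi^*$ and then identify the resulting element of $\mathrm{Lie}(E)^*$; the paper traces the composition $E\to E\times\hat{\mathbb G}_a\to E_\Psi^*\xrightarrow{s_\Psi}\hat{\mathbb G}_a$ via the inverse isomorphism $(\mathbf{x}_0,y)\mapsto[v(\mathbf{x}_0),y-\Theta(v(\mathbf{x}_0))]$, while you compute the difference $s_\Psi-D\rho_\Theta$ directly using $D\Psi=D\Theta\circ Di$ and linearity---these are two presentations of the same computation. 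For (b) and (c) your argument ($\phi\circ v=v\circ F_{\hat q}$, hence $DF_{\hat q}=0$ in characteristic $p$, then apply (a) at level $n+1$) is exactly the paper's.
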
\label{diagram}

\proof  Note that for any $\Theta \in {\bX}_{n}(E)$, the corresponding extension $E_{\Theta}^{*}$ is trivial and we have the following isomorphism 
$$\dfrac{J^{n}E\times \hat{\mathbb{G}}_{a}}{\Gamma(N^{n})}=E_{\Theta}^{*}\simeq E \times \hat{\mathbb{G}}_{a} $$ given by $[\mathbf{x},y]\mapsto (u(\mathbf{x}),\Theta(\mathbf{x})+y)$. By \eqref{s_Witt_def} the inverse is given by $$(\mathbf{x}_{0},y)\mapsto [v(\mathbf{x}_{0}),y-\Theta(v(\mathbf{x}_{0}))].$$ 
Hence the composition $E\rightarrow E \times \hat{\mathbb{G}}_{a} \rightarrow E_{\Theta}^{*}\rightarrow \hat{\mathbb{G}}_{a}$ is given by $\mathbf{x}_{0}\mapsto -\Theta\circ v(\mathbf{x}_{0})$ and passing to the induced map on Lie algebras, we obtain $\tilde{\Upsilon}(\Theta)=-D(\Theta\circ v)$ which proves (a).\medskip

Recall from Section 4B in \cite{BS b} that the Frobenius morphism $\phi:
J^{n+1}E \map J^nE$ is given by $\phi(\bx_{i})=\bx_{i}^{\hq}+\pi \bx_{i+1}$ 
in Buium-Joyal coordinates which are same as Witt coordinate for $i=0,1$ by Proposition 3.2 in \cite{BS b}. Thus we have $$\bar{\Theta}(v(\mathbf{x}_{0}))=\Theta(\phi(v(\mathbf{x}_{0})))=\Theta(\phi(\mathbf{x}_{0},0,\ldots,0))=\Theta(\mathbf{x}_{0}^{(\hq)},0,\ldots,0)=\Theta(v(\mathbf{x}_{0}^{(\hq)})).$$
Hence $D(\bar{\Theta}(v(\mathbf{x}_{0}))=0$ which proves (b). Then (c) follows from (b) by the commutativity of the left rectangle of diagram \eqref{Char_to_Hdg_fil_1}.
\qed

 We define the $R$-module $${\bH}_{n}(E):=\dfrac{{\bX}(N^{n})}{i^{*}\phi^{*}({\bX}_{n-1}(E)_{\phi})}.$$
Then $u:N^{n+1}\longrightarrow N^{n}$ induces a map $u^{*}:{\bX}(N^{n})\longrightarrow {\bX}(N^{n+1})$. Also $u^{*}$ commutes with $i^{*}$ and $\phi^{*}$. Hence we have a map $$u^{*}:{\bH}_{n}(E)\longrightarrow {\bH}_{n+1}(E).$$
Define $ {\bH}(E):=\underrightarrow{\lim}$ ${\bH}_{n}(E) $ and $ \mathbf{I}(E):=\underrightarrow{\lim}$ $\mathbf{I}_{n}(E) $.
Similarly, $\mathfrak{f}:N^{n+1}\longrightarrow N^{n}$ induces a linear map $\mathfrak{f}^{*}:{\bX}(N^{n})\longrightarrow {\bX}(N^{n+1})$ which descends to a semilinear map of $R$-modules $$\mathfrak{f}^{*}:{\bH}_{n}(E)\longrightarrow {\bH}_{n+1}(E)$$ by Proposition \eqref{Fr identity} (ii). This induces a semilinear endomorphism $\mathfrak{f}^{*}:{\bH}(E)\longrightarrow {\bH}(E)$. By Proposition \ref{diagram} and  diagram \eqref{Char_to_Hdg_fil_1} we have,
$${
\xymatrix{
0  \ar[r] &\dfrac{{\bX}_{n}(E)}{\bX(E)+\phi^{*}({\bX}_{n-1}(E))_{\phi}}\ar[d]^\Upsilon \ar[r] &{\bH}_{n}(E)\ar[d]^\Phi \ar[r] &\mathbf{I}_{n}(E)\ar@{^{(}->}[d] \ar[r]&0 \\
0 \ar[r] &\mathrm{Lie} (E)^{*} \ar[r] &\mathrm{Ext}^{\#}_{A}(E,\hat{\mathbb{G}}_{a}) \ar[r] &\mathrm{Ext}_{A}(E,\hat{\mathbb{G}}_{a}) \ar[r] &0.}
}$$
Therefore passing to the limit and by Definition \ref{deRhamdef} we get,
\label{map to_deRham}
\begin{equation} 
{\xymatrix{
0  \ar[r] &{\bX}_{\mathrm{prim}}(E)\ar[d]^\Upsilon \ar[r] &{\bH}(E)\ar[d]^\Phi \ar[r] &\mathbf{I}(E)\ar@{^{(}->}[d] \ar[r]&0 \\
0 \ar[r] &\mathrm{Lie} (E)^{*} \ar[r] &\bH^{*}_{\mathrm{dR}}({E}) \ar[r] &\mathrm{Ext}_{A}(E,\hat{\mathbb{G}}_{a}) \ar[r] &0.}
}.
\end{equation}
To describe $\Upsilon$ explicitly, let  $\Theta_{1},\ldots,\Theta_{h}$ be a basis of ${\bX}_{\mathrm{prim}}(E)_{K}$. For each $\Theta_{j}\in {\bX}_{n}(E)$ the derivative  matrix is given by  $D\Theta_{j}=(P_{0j},P_{1j},\ldots, P_{nj})$ as in \eqref{Derivative_matrix}. We define the following matrix 
\begin{equation*}
P=(P_{01},P_{02},\ldots,P_{0h})=\begin{bmatrix}
{\partial \Theta_{1}}/{\partial x^{1}_{0}} &  \dots &  {\partial \Theta_{h}}/{\partial x^{1}_{0}} \\

    \vdots & \dots & \vdots \\
{\partial \Theta_{1}}/{\partial x^{h}_{0}} & \dots & {\partial \Theta_{h}}/{\partial x^{h}_{0}}  
\end{bmatrix}.
\end{equation*}

\begin{theorem}\label{finite_dim_thm} 
For an Anderson module $E$, ${\bH}(E)$ is a free $R$-module of rank 
$\leq r$. Moreover, the corresponding map $\Phi_{K}$ is injective if and only if 
the $h\times h$ matrix $P$ is invertible over $K$.
\end{theorem}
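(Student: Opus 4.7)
\medskip
\noindent \textbf{Proof proposal.}
The strategy is to exploit the short exact sequence
\begin{equation*}
0 \longrightarrow \bX_{\mathrm{prim}}(E) \longrightarrow \bH(E) \longrightarrow \mathbf{I}(E) \longrightarrow 0
\end{equation*}
together with the comparison diagram \eqref{map to_deRham}. By Theorem \ref{X_prim}, $\bX_{\mathrm{prim}}(E)$ is a free $R$-module of rank $d$. On the other hand, $\mathbf{I}(E)$ is by definition a submodule of $\mathrm{Ext}_A(E,\hG)$, which is a free $R$-module of rank $r-d$ by Theorem \ref{rank_Ext}. Since $R$ is a discrete valuation ring, every submodule of a finite free $R$-module is finite free; hence $\mathbf{I}(E)$ is free of rank at most $r-d$. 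Freeness of $\mathbf{I}(E)$ makes it projective, so the above short exact sequence splits, and we conclude that $\bH(E)$ is free of rank at most $d+(r-d)=r$.

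For the second assertion, I would analyze the left square of diagram \eqref{map to_deRham} after tensoring with $K$. Since the right vertical map $\mathbf{I}(E)\hookrightarrow \mathrm{Ext}_A(E,\hG)$ is injective by construction, the snake lemma applied to the diagram with exact rows gives a natural identification
\begin{equation*}
\ker(\Phi_K) \;\longisomap\; \ker(\Upsilon_K).
\end{equation*}
Thus $\Phi_K$ is injective if and only if $\Upsilon_K$ is injective.

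It remains to translate injectivity of $\Upsilon_K$ into the invertibility of $P$. By Corollary \ref{rk_X_prim}, $\bX_{\mathrm{prim}}(E)_K$ is a $d$-dimensional $K$-vector space with basis $\Theta_1,\dots,\Theta_d$, and $\mathrm{Lie}(E)^*\otimes_R K$ is also $d$-dimensional with the dual basis of the standard Witt coordinates $x^1_0,\dots,x^d_0$ on $E$. By Proposition \ref{diagram}(a), $\Upsilon(\Theta_j) = -D(\Theta_j\circ v)$, and since the Teichm\"uller section is $v(\mathbf{x}_0)=(\mathbf{x}_0,0,\dots,0)$ in Witt coordinates, only the partial derivatives with respect to $\mathbf{x}_0$ survive. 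Thus the matrix of $\Upsilon_K$ in these two bases is precisely $-P$, and injectivity of $\Upsilon_K$ is equivalent to $\det P \neq 0$ in $K$, i.e.\ to the invertibility of $P$ over $K$.

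The main technical point is simply the careful identification of the matrix of $\Upsilon_K$; once Proposition \ref{diagram}(a) and the explicit form of $v$ in Witt coordinates are in hand, everything else reduces to standard homological algebra over a DVR (freeness of submodules of free modules, splitting of short exact sequences with projective quotient, and the snake lemma).
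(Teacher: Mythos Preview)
Your proposal is correct and follows essentially the same approach as the paper's own proof: both use the short exact sequence together with Theorem~\ref{X_prim} and the freeness of $\mathbf{I}(E)$ (as a submodule of $\mathrm{Ext}_A(E,\hG)$ over a DVR) to get freeness and the rank bound, and both reduce injectivity of $\Phi_K$ to that of $\Upsilon_K$ via the injectivity of the right vertical map, then identify the matrix of $\Upsilon$ as $-P$ using Proposition~\ref{diagram}(a). Your write-up is slightly more explicit about the splitting and the snake lemma, but the argument is the same.
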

\proof By Theorem \ref{X_prim} we have $\bX_{\mathrm{prim}}(E)$ is a free 
$R$-module of rank $d$.
From the short exact sequence in equation \eqref{map to_deRham} it follows that ${\bH}(E)$ is free since $\bXp(E)$ and $\bI(E)$ are both free. 
Moreover, we have 
$$\mathrm{rk}({\bH}(E))= \mathrm{rk}({\bX}_{\mathrm{prim}}(E)) + \mathrm{rk}(\mathbf{I}(E))\leq h+(r-h)=r.$$ 
By Proposition \ref{diagram}(a) we have $\Upsilon(\Theta_{i})=-D\Theta_{i}\circ v=-P_{0i}$. Thus the matrix of $\Upsilon$ with respect to the basis $\Theta_{1},\ldots,\Theta_{h}$ is equal to $-P$. Since the map $\mathbf{I}(E)_{K}\rightarrow \mathrm{Ext}_{A}(E,\hat{\mathbb{G}}_{a})_{K} $ is injective, note that $\Phi_{K}$ is injective if and only if $\Upsilon$ is injective. Also from Corollary \ref{rk_X_prim}, we have $\mathrm{rk}_{K}({\bX}_{\mathrm{prim}}(E)_{K})=\mathrm{rk}_{K}(\mathrm{Lie} (E)^{*}_{K})=h$. Therefore $\Phi_{K}$ is injective if and only if $P$ is invertible. \qed

\section{Canonical Lift Criteria of Drinfeld modules}\label{Drinfeld_module}

Let $(E,\varphi_{E})$ be a Drinfeld $A$-module of rank $r$ over $R$ where 
$A$ is taken to be as in Section \ref{AndMod}. Then $\vp$ satisfies
\begin{align}
\label{struct-coeff}
\varphi_{E}(z)=\displaystyle{\sum_{i=0}^{r}}a_{i}\tau^{i},
\end{align} 
where the structure coefficients $a_i \in R$ and $a_r$ is invertible.
Then recall that for each $n\geq 1$ we have the canonical exact sequence 
$$0\longrightarrow N^{n}\longrightarrow J^{n}E\longrightarrow E\longrightarrow 0.$$
We will compute the $A$-action on $N^{1}$ below. Consider the following commutative  diagram

$${
\xymatrix{
W_{1}(B)\ar[d]_{\varphi_{J^{1}E}(z)} \ar[r]^{w} &B\times B^{\phi}\ar[d]^{\varphi_{E}(z)\times \varphi_{E^{\phi}}(z)}  \\
W_{1}(B) \ar[r]^{w} &B\times B^{\phi} }}
$$
where $w$ is the ghost map given by $w(x_{0},x_{1})=(x_{0},x_{0}^{\hq}+\pi x_{1})$ and $B^{\phi}$ is the $R$-algebra given by the structure map $R\xrightarrow{\phi}R\rightarrow B$.
Let $\varphi_{J^{1}E}(z)(x_{0},x_{1})=(z_{0},z_{1})$. Also we have  $$\left(\varphi_{E}(z)\times \varphi_{E^{\phi}}(z)\right)(b_{0},b_{1})=(\displaystyle{\sum_{i=0}^{r}} a_{i}b_{0}^{q^{i}},\displaystyle{\sum_{i=0}^{r}} \phi(a_{i})b_{1}^{q^{i}}).$$
Then by the commutativity of the diagram we have  $$w(z_{0},z_{1})= \left(\varphi_{E}(z)\times \varphi_{E^{\phi}}(z)\right)(x_{0},x_{0}^{\hq}+\pi x_{1}).$$
Therefore $z_{0}=\displaystyle{\sum_{i=0}^{r}} a_{i}x_{0}^{q^{i}}=\varphi_{E}(z)(x_{0})$ and 

\begin{eqnarray*}
z_{0}^{\hq}+\pi z_{1}&=& \displaystyle{\sum_{i=0}^{r}} \phi(a_{i})(x_{0}^{\hq}+\pi x_{1})^{q^{i}}\\
 \pi z_{1}&=& \displaystyle{\sum_{i=0}^{r}} (\phi(a_{i})-a_{i}^{\hq})x_{0}^{q^{i+f}}+\displaystyle{\sum_{i=0}^{r}} \pi^{q^{i}}\phi(a_{i})x_{1}^{q^{i}}\\
 z_{1}&=&\displaystyle{\sum_{i=0}^{r}} a_{i}'x_{0}^{q^{i+f}}+\displaystyle{\sum_{i=0}^{r}} \pi^{q^{i}-1}\phi(a_{i})x_{1}^{q^{i}}.
\end{eqnarray*}
We obtain the action of $\varphi_{N^{1}}(z)$ by setting $x_{0}=0$. Therefore we have
\begin{equation}\label{action_N^1_Drin}
\varphi_{N^{1}}(z)=\displaystyle{\sum_{i=0}^{r}}\pi^{q^{i}-1}\phi(a_{i})\tau^{i} .  
\end{equation} 
Note that the $A$-action of $J^{1}E$ is given by 
\[
\varphi_{J^{1}E}(z)=
\begin{bmatrix}
   \varphi_{E}(z)  & 0  \\
    \eta_{J^{1}E}(z) & \varphi_{N^{1}}(z)  & 
     
\end{bmatrix},
\]
where 
\begin{equation}\label{bider_1}
\eta_{J^{1}E}(z)=\displaystyle{\sum_{i=0}^{r}} a_{i}'\tau^{i+f}.  
\end{equation}
Recall from \cite{BS b} for a Drinfeld module the upper and lower splitting numbers are equal and we say the splitting number of $E$ is $m$. From Theorem 8.3 in \cite{BS b} we know that the splitting number $m\leq r$ and $m=1$ if and only if $E$ has a canonical lift. 
Also recall the following exact sequence of $R$-modules
\begin{equation}\label{eqn_Ext}
0\longrightarrow {\bX}_{n}(E) \xrightarrow{i^{*}} {\bX}(N^{n})\xrightarrow{\partial} \mathrm{Ext}_{A}(E,\hat{\mathbb{G}}_{a}).
\end{equation}
At $n=m$, we have ${\bX}_{n}(E)\neq 0$ but ${\bX}_{n}(E)=0$ for all $n<m$. Hence there exist a nonzero $\Psi\in {\bX}(N^{m})$ such that $\partial(\Psi)=0$.  We know from Theorem \ref{basis_canonical_char} that $\{\Psi_{1},\Psi_{2},\ldots,\Psi_{m}\}$ is an $R$-basis of ${\bX}(N^{m})$. For any $\Psi \in \bX(N^n)$ 
there exists unique $\tilde{\lambda}_{1},\ldots,\tilde{\lambda}_{m}\in R$ such that $$\Psi=\tilde{\lambda}_{m}\Psi_{m}-\tilde{\lambda}_{m-1}\Psi_{m-1}-\ldots -\tilde{\lambda}_{1}\Psi_{1}. $$
Note that $\tilde{\lambda}_{m}\neq 0$, since otherwise $\Psi\in {\bX}(N^{m-1})={0}.$ Therefore we have a unique $\Theta_{m}\in {\bX}_{m}(E)$ such that
\begin{equation}\label{eqn_lamda}
i^{*}\Theta_{m}=\Psi_{m}-\lambda_{m-1}\Psi_{m-1}-\ldots -\lambda_{1}\Psi_{1},  
\end{equation} 
where $\lambda_{i}=\tilde{\lambda}_{i}/\tilde{\lambda}_{m}$.  
Indeed, from Theorem 9.8 in \cite{BS b} we know that $\lambda_{1},\ldots \lambda_{m-1} \in R$. Also note that for each $n\geq 1$, we have an inclusion 
$${
\xymatrix{
 {\bX}_{n}(E)/{\bX}_{n-1}(E) \ar@{^{(}->}[r] &{\bX}(N^{n})/{\bX}(N^{n-1})\cong R\langle\Psi_{n}\rangle \ \mathrm{by}\ \mathrm{Theorem} \ \ref{basis_canonical_char}.}
}$$
Therefore we have ${\bX}_{m}(E)=R\langle\Theta_{m}\rangle$ .
\medskip
Also from Theorem 9.8 in \cite{BS b}, we know that $\lambda_{1},\ldots \lambda_{m-1} \in R$.
Also note that for each $n\geq 1$, we have an inclusion 
$${
\xymatrix{
 {\bX}_{n}(E)/{\bX}_{n-1}(E) \ar@{^{(}->}[r] &{\bX}(N^{n})/{\bX}(N^{n-1})\cong R\langle\Psi_{n}\rangle \ \mathrm{by}\ \mathrm{Theorem} \ \ref{basis_canonical_char}.}
}$$
Therefore we have ${\bX}_{m}(E)=R\langle\Theta_{m}\rangle$ .
\medskip

Let us recall from Section \ref{splitting_J^n} that for each $n\geq 1$ we have the \textup{Teichm{$\ddot{u}$}ller section} $\teich:E\longrightarrow J^{n}E$  which is only an $\mathbb{F}_{q}$-linear map. Therefore we have a splitting of $J^{n}E$  as $\mathbb{F}_{q}$-module schemes given by $$J^{n}E\simeq E\times N^{n}.$$ Note that in this splitting the \textup{Teichm{$\ddot{u}$}ller section} is given by $\teich(x)=(x,0,\ldots ,0)$. We would like to emphasize that the above splitting is not as $A$-module schemes in general. Given any character $\Theta\in{\bX}_{n}(E)$ consider the following commutative diagram 
$${
\xymatrix{
E \ar[r]^\teich \ar[rd]_{\teich^{*}\Theta} &
J^{n}E \ar[d]^\Theta &
N^{n} \ar[l]_i \ar[ld]^{i^{*}\Theta}\\
&\hat{\mathbb{G}}_a}
}$$
Therefore we can write \begin{equation}\label{split_theta}
\Theta(x_{0},\ldots,x_{n})=g_{\Theta}(x_{0})+i^{*}\Theta(x_{1},\ldots,x_{n}),
\end{equation} 
where $g_{\Theta}=\teich^{*}\Theta$ is an $\mathbb{F}_{q}$-linear map and can be expressed as a restricted power series over $R$. However $i^{*}\Theta$ is $A$-linear since $i$ is the $A$-linear inclusion of $N^{n}$ to $J^{n}E$.
\begin{definition} Let $E$ be a Drinfeld module of rank $r$ with 
splitting number $m$. We define $\gamma(E)=\pi g'_{\Theta_{m}}(0)$, where $\Theta_{m}$ is the unique element in ${\bX}_{m}(E)$ as above. We will denote it by $\gamma$ once $E$ is fixed. 
\end{definition} 

\bigskip

 Let $(E,\varphi_{E})$ be a Drinfeld $A$-module of rank $2$ over $R$. We know from Theorem 8.3 in \cite{BS b}, that $E$ is CL iff  the splitting number $m=1$. Therefore if $E$ is not CL then we have from (\ref{eqn_Ext}),  $\partial \Psi_{1}\neq 0$.
 
\begin{definition} Given a  Drinfeld module $(E,\varphi_{E})$ of rank $2$ over $R$, we define 
\begin{align*}
\lambda_{1}(E)=\left\{\begin{array}{ll}
		0, & \mb{if } E~ \mathrm{is~ CL}\\
\partial(\Psi_{2})/\partial(\Psi_{1}), & \mb{otherwise. }
	\end{array} 
	\right.
\end{align*}

\end{definition}
We would like to mention that if $E$ is not CL then $\lambda_{1}(E)$ is the same as in equation (\ref{eqn_lamda}).

\begin{theorem}
Let $(E,\varphi_{E})$ be a Drinfeld $A$-module of rank $2$ over $R$, then $E$ is $\mathrm{CL}$ if  both $\gamma$ and $\lambda_{1}$ are zero.
\end{theorem}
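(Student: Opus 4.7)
I argue by contradiction. If $E$ were not a canonical lift, then by Theorem 8.3 of \cite{BS b} the splitting number would be $m = 2$, and there would exist a unique nonzero $\Theta_2 \in \bX_2(E)$ with $i^*\Theta_2 = \Psi_2 - \lambda_1 \Psi_1$. Using the Teichm\"uller $\FF_q$-splitting $J^2 E \cong E \times N^2$, write $\Theta_2(x_0, x_1, x_2) = g(x_0) + i^*\Theta_2(x_1, x_2)$ with $g = \sum_i c_i \tau^i \in R\{\tau\}$. The hypotheses $\lambda_1 = 0$ and $\gamma = 0$ then translate to $i^*\Theta_2 = \Psi_2$ and $c_0 = g'(0) = 0$ (using $\pi$-torsion freeness of $R$).

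Next I would write out the $A$-linearity $\Theta_2 \circ \varphi_{J^2 E}(z) = \pi\Theta_2$ in this splitting. The $A$-action takes the block form
\begin{equation*}
\varphi_{J^2 E}(z)(x_0, \mathbf{w}) = \bigl(\varphi_E(z)(x_0),\, \eta(z)(x_0) + \varphi_{N^2}(z)(\mathbf{w})\bigr),
\end{equation*}
with biderivation $\eta(z) = (\eta_1(z), \eta_2(z)) : E \to N^2 \cong \hG^2$, where $\eta_1(z) = \eta_{J^1 E}(z) = \sum_i a_i' \tau^{i+f}$ by \eqref{bider_1}. Using the $\FF_q$-linearity of $\Psi_2$ and its $A$-linearity on $N^2$, the $A$-linearity of $\Theta_2 = g + \Psi_2$ collapses to the functional equation
\begin{equation*}
g \circ \varphi_E(z) - \pi g \,=\, -\,\Psi_2 \circ \eta(z) \quad \text{in } R\{\tau\}.
\end{equation*}
I would next identify $\Psi_2$ explicitly: via the isomorphism $N^2 \cong J^1\hG$ of Theorem \ref{char ker} composed with the twist $J^1 \nu_1$ coming from Theorem \ref{Iso_N^1}, the character $\Psi_2$ is represented on $J^1\hG$ by the ghost-type character $(y_0, y_1) \mapsto y_0^{\hq} + \pi y_1$. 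Computing the second Witt component $\eta_2(z)$ by solving the ghost relation
\begin{equation*}
z_0^{\hq^2} + \pi z_1^{\hq} + \pi^2 z_2 \,=\, \varphi_E(z)^{\phi^2}\!\bigl(x_0^{\hq^2}\bigr)
\end{equation*}
at $x_1 = x_2 = 0$, a direct calculation shows that $\Psi_2 \circ \eta(z) = \tau^f \cdot \eta_1(z) + \pi\,\eta_2(z)$ has zero $\tau^k$-coefficient for every $k < 2f$ and its $\tau^{2f}$-coefficient equals $1 - \pi^{\hq^2 - 1}$, which is a unit in $R$ (the corrections from the $\nu_1$-twist contribute only in degrees $>2f$).

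Finally, comparing coefficients of $\tau^k$ in the functional equation using $\tau \cdot a = a^q \tau$, the left-hand side has $\tau^k$-coefficient $c_{k-2} a_2^{q^{k-2}} + c_{k-1} a_1^{q^{k-1}} + c_k (\pi^{q^k} - \pi)$ (with the $c_{k-2}$-term absent for $k<2$). Setting this equal to zero for $1 \leq k < 2f$ and using $c_0 = 0$ together with the factorization $\pi^{q^k} - \pi = \pi(\pi^{q^k - 1} - 1)$, whose second factor is a unit, one obtains $c_k = 0$ for every $k < 2f$ by induction. At $k = 2f$ the equation reduces to $c_{2f} \cdot \pi \cdot (\pi^{\hq^2 - 1} - 1) = -(1 - \pi^{\hq^2 - 1})$, which forces $c_{2f} = \pi^{-1} \notin R$, a contradiction. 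The main obstacle is the explicit identification of the canonical character $\Psi_2$ via $N^2 \cong J^1\hG$ and the computation of the second biderivation component $\eta_2(z)$ from the ghost relation for Witt vectors of length three; once these formulas are in hand, the coefficient comparison is mechanical.
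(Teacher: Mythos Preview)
Your approach is correct but follows a genuinely different route from the paper's proof. The paper argues structurally via the semilinear operator $\mathfrak{f}^*$ on $\bH(E)_K$: when $\gamma=0$, Proposition~\ref{Fr identity} gives $\mathfrak{f}^*i^*=i^*\phi^*$, so $\mathfrak{f}^*$ preserves the subspace $\bXp(E)_K$ and descends to $\mathbf{I}(E)_K$; it is then shown to vanish on both pieces (on $\bXp$ because $\phi^*$ is zero there by construction, on $\mathbf{I}(E)_K$ because $\bar{\mathfrak{f}}^*\partial(\Psi_1)=\partial(\Psi_2)=\lambda_1\partial(\Psi_1)=0$). Hence $\mathfrak{f}^*=0$ on all of $\bH(E)_K$. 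But the matrix of $\mathfrak{f}^*$ in the basis $\{i^*\Theta,\Psi_1\}$ always has a $1$ in its $(2,1)$-entry (coming from $\mathfrak{f}^*\Psi_1=i^*\Theta+\lambda_1\Psi_1$), a contradiction.

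Your argument instead unfolds the $A$-linearity of $\Theta_2$ directly into a functional equation for the Teichm\"uller part $g$, identifies $\Psi_2=\nu_1\circ\mathfrak{f}$ explicitly in Witt coordinates, and extracts a contradiction by coefficient comparison at $\tau^{2f}$. Your key computations are sound: indeed $\mathfrak{f}(x_1,x_2)=x_1^{\hq}+\pi x_2$, the $\nu_1$-correction first contributes at degree $2f+1$, and one checks (as you indicate) that the $\tau^{2f}$-coefficient of $\tau^f\eta_1(z)+\pi\eta_2(z)$ equals $(\pi')^{\hq}+\pi\Delta(\pi)=1-\pi^{\hq^2-1}$ using $\pi'=1-\pi^{\hq-1}$. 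The induction on $c_k$ then forces $c_{2f}=\pi^{-1}$.

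The trade-off: the paper's proof is shorter and more conceptual, using only the existence of $\bH(E)$ and the known form of $[\mathfrak{f}^*]$, while yours is self-contained and computational, bypassing the $\bH(E)$-machinery at the cost of the explicit Witt-vector and biderivation identifications you flag as the main obstacle. Both are valid; your method also makes transparent exactly where integrality fails.
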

\proof Suppose $\lambda_1 = \gamma = 0$ but $E$ is not CL. We have $\lambda_{1}$ and $\gamma$ are equal to $0$. Then recall the exact sequence of $K$-vector spaces 
$$ 0  \longrightarrow {\bX}_{\mathrm{prim}}(E)_{K}\longrightarrow {\bH}(E)_{K} \longrightarrow\mathbf{I}(E)_{K} \longrightarrow 0 .$$
Since $\gamma=0$, from Proposition \ref{Fr identity} we have $\mathfrak{f}^{*}i^{*}=i^{*}\phi^{*}$. Therefore for each $n\geq 1$ the following diagram commutes
$${
\xymatrix{
0  \ar[r] &{\bX}_{n-1}(E)_{K}\ar[d]^{\phi^{*}} \ar[r]^{i^{*}} &{\bX}(N^{n-1})_{K}\ar[d]^{\mathfrak{f}^{*}} \ar[r]^{\partial} &\mathbf{I}_{n-1}(E)_{K}\ar[d]^{\bar{\mathfrak{f}}^{*}} \ar[r]&0 \\
0 \ar[r] &{\bX}_{n}(E)_{K} \ar[r]^{i^{*}} &{\bX}(N^{n})_{K} \ar[r]^{\partial} &\mathbf{I}_{n}(E)_{K} \ar[r] &0.}}
$$
Hence the semi-linear operator $\mathfrak{f}^{*}$ on ${\bH}(E)_{K}$ can be restricted to both on   ${\bX}_{\mathrm{prim}}(E)_{K}$ and $\mathbf{I}(E)_{K}$. Note that the restriction of $\mathfrak{f}^{*}$ on ${\bX}_{\mathrm{prim}}(E)_{K}$ is $\phi^{*}$ which is zero. Since $E$ is not CL, $\partial(\Psi_{1})\neq 0$ and forms a $K$-basis of $\mathbf{I}(E)_{K} $. By the above commutative diagram  we have $$\bar{\mathfrak{f}}^{*}\partial(\Psi_{1})=\partial(\mathfrak{f}^{*}(\Psi_{1}))=\partial(\Psi_{2})=\lambda_{1}\partial(\Psi_{1})=0, \ \mathrm{since} \ \lambda_{1}=0 .$$ 
Therefore the semi-linear operator $\mathfrak{f}^{*}$ on ${\bH}(E)_{K}$ is zero as it is zero  both on $\bXp(E)_K$ and $\bI(E)_K$ and the exact sequence of $K$-vector spaces splits. But when $E$ is not CL, by our earlier observation,
 the matrix associated to the semi-linear operator is given by
\[
[\mathfrak{f}^{*}]=
\begin{bmatrix}
   0  & -\gamma  \\
    1 & \phi(\lambda_{1})& 
     
\end{bmatrix}.
\]
This is a contradiction because the above matrix is never zero for any value of $\lambda_{1}$ and $\gamma$.
\qed

\begin{lemma}\label{CL_gamma} 
Let $E$ be a  Drinfeld $A$-module of rank $r$ over $R$. If $E$ admits a Canonical Lift $\psi(x)=x^{\hq}+\pi h(x)$, then $\gamma=-\pi h'(0)$.
\end{lemma}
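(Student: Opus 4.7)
The plan is to exhibit the unique generator $\Theta_1 \in \bX_1(E)$ with $i^*\Theta_1 = \Psi_1$ in closed form using the canonical lift $\psi$, and then read off $g_{\Theta_1}'(0)$ directly. Working in Witt coordinates $(x_0,x_1)$ on $J^1 E$, and noting that $\psi(0)=0$ forces $h(0)=0$ (since $R$ is $\pi$-torsion free), I would begin by considering the candidate
\[
\Theta_1(x_0,x_1) \;:=\; \Psi_1\bigl(x_1 - h(x_0)\bigr),
\]
where $\Psi_1\colon N^1 \to \hG$ is the canonical character of Theorem \ref{Iso_N^1}. Evaluating at $x_0=0$ gives $i^*\Theta_1 = \Psi_1$, which matches the normalization in equation \eqref{eqn_lamda} in the CL case $m=1$, so $\Theta_1$ is the specific generator used to define $\gamma$.

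The heart of the argument is verifying the $A$-linearity $\Theta_1\circ\varphi_{J^1 E}(z) = \pi\cdot\Theta_1$. Using the explicit formula for $\varphi_{J^1 E}(z)$ obtained from Lemma \ref{action_N^{1}} and \eqref{bider_1} (with biderivation $\eta_{J^1 E}(z)(x_0)=\sum a_i' x_0^{q^{i+f}}$ and $A$-action $\varphi_{N^1}(z)$ on $N^1$), together with the additivity of $\Psi_1$ and the defining identity $\Psi_1\circ\varphi_{N^1}(z) = \pi\cdot\Psi_1$, the $A$-linearity of $\Theta_1$ reduces to the vanishing of
\[
\eta_{J^1 E}(z)(x_0) \;-\; h\bigl(\varphi_E(z)(x_0)\bigr) \;+\; \varphi_{N^1}(z)\bigl(h(x_0)\bigr)
\]
as a power series in $x_0$. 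I would derive this identity by expanding $\psi\,\varphi_E(z) = \varphi_E(z)\,\psi$ on the level of power series, solving for $\pi\, h(\varphi_E(z)(x_0))$, and simplifying using $\pi a_i' = \phi(a_i)-a_i^{\hq}$ together with the Frobenius congruence $\phi(a_i)\equiv a_i \pmod{\pi}$ (which is forced by the existence of the CL: reducing $\psi\,\varphi_E(z) = \varphi_E(z)\,\psi$ modulo $\pi$ gives $\bar a_i^{\hq}=\bar a_i$, i.e.\ $\bar a_i\in\mathbb{F}_{\hq}$).

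Once the $A$-linearity is secured, the rest is immediate. By additivity of $\Psi_1$,
\[
\Theta_1(x_0,x_1) \;=\; \Psi_1(x_1) \;-\; \Psi_1\bigl(h(x_0)\bigr),
\]
so the Teichmuller-section piece is $g_{\Theta_1}(x_0) = \Theta_1(x_0,0) = -\Psi_1(h(x_0))$. Since $\Psi_1$ induces the identity on Lie algebras (Theorem \ref{Iso_N^1}) and $h(0)=0$, the chain rule yields $g_{\Theta_1}'(0) = -\Psi_1'(0)\cdot h'(0) = -h'(0)$. Substituting into the definition $\gamma = \pi\, g_{\Theta_1}'(0)$ gives $\gamma = -\pi\, h'(0)$, as claimed. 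The main obstacle is the $A$-linearity check in the second step, which demands careful bookkeeping of the non-commutation between $\tau$, $\pi$ and the $a_i$ in $R\{\tau\hat{\}}$; an alternative would be to read $\gamma$ off directly from Proposition \ref{Fr identity}(i) applied to $\Theta_1$ by comparing the pullback by the jet-space Frobenius with the lateral Frobenius action on $\Psi_1$, and then extracting the Lie-algebra component.
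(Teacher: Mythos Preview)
Your approach is essentially the same as the paper's: you write down the same candidate $\Theta_1(x_0,x_1)=\Psi_1(x_1-h(x_0))$, check $i^*\Theta_1=\Psi_1$, and then compute $g_{\Theta_1}'(0)=-h'(0)$ via the chain rule using $\Psi_1'(0)=1$. The only difference is how $A$-linearity is obtained: the paper observes that the CL $\psi$ directly furnishes an $A$-linear section $\mathfrak{s}(x)=(x,h(x))$ of $J^1E\to E$ (so the associated retraction $\mathfrak{r}(x_0,x_1)=x_1-h(x_0)$ is $A$-linear and $\Theta_1=\nu_1\circ\mathfrak{r}$), whereas you propose to verify the same fact by a direct power-series computation from $\psi\varphi_E(z)=\varphi_E(z)\psi$; the paper's route bypasses the bookkeeping you flag as the main obstacle.
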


\proof Suppose $E$ admits the canonical lift $\psi(x)=x^{\hq}+\pi h(x)$.   
Therefore the canonical exact sequence of $A$-module schemes  
$$0\longrightarrow N^{1}\longrightarrow J^{1}E\longrightarrow E\longrightarrow 0$$
splits. The splitting is explicitly given by the section $\mathfrak{s}(x)=(x,h(x)):E\longrightarrow J^{1}E$. The induced retraction from $J^{1}E$ to $N^{1}$ is given by $\mathfrak{r}(x_{0},x_{1})=x_{1}-h(x_{0})$. By Theorem $5.8$ in \cite{BS b} we know that $\nu_{1}:N^{1}\longrightarrow\hat{\mathbb{G}}_{a}$ is an isomorphism.
Hence $\Theta_{1}=\nu_{1}\circ \mathfrak{r}$. Therefore $g_{\Theta_{1}}=v^{*}\Theta_{1}=\nu_{1}\circ(-h)$
Since $E$ is CL we have $m=1$ and $\gamma=\pi g'_{\Theta_{1}}(0)=\pi\left(\nu_{1}\circ(-h)\right)'(0)=-\pi h'(0)$, since the linear term of $\nu_{1}(x)$ is $x$.
\qed

\begin{theorem}
Let $E$ be a Drinfeld module of rank $r$ over $R$. There are restricted power series $f_{i}$ in $2r+1$  variables over $R$ such that $E$ has a canonical lift iff    $$ f_{i}(a_{1},a_{2},\ldots ,a_{r},a_{1}',a_{2}',\ldots ,a_{r}',a_{r}^{-1})=0,\ for\ 1\leq i\leq r-1$$ 
where $a_1,\dots, a_r$ are the structure coefficients of $E$ as in 
(\ref{struct-coeff}).
\end{theorem}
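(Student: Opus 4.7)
The plan is to identify the equations $f_i$ with the coordinates of the obstruction class $\partial(\Psi_1) \in \mathrm{Ext}_A(E,\hG)$, where $\partial$ is the connecting map of
$$0 \longrightarrow \bX_1(E) \longrightarrow \bX(N^1) \stk{\partial}{\longrightarrow} \mathrm{Ext}_A(E,\hG)$$
and $\Psi_1$ is the canonical generator of $\bX(N^1)$. By Theorem $8.3$ of \cite{BS b}, $E$ admits a canonical lift iff its splitting number $m=1$, iff $\bX_1(E)\neq 0$. Since $\bX(N^1)=R\langle\Psi_1\rangle$ is free of rank one by Theorem \ref{basis_canonical_char}, this becomes the single vector condition $\partial(\Psi_1)=0$. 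By Theorem \ref{rank_Ext} the ext module has $K$-rank $r-1$, so $\partial(\Psi_1)=0$ will split as precisely $r-1$ scalar equations in a chosen $R$-basis.

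Next I would compute $\partial(\Psi_1)$ explicitly via the pushout construction of diagram \eqref{pushout}. The pushout of $0\to N^1\to J^1E\to E\to 0$ along $\Psi_1$ is represented by the biderivation $\Psi_1\circ \eta_{J^1 E}(z)$; using \eqref{bider_1} and writing $\Psi_1=\sum_{j\ge 0}B_j\tau^j$ with $B_0=I$ and with the $B_j$ determined recursively from \eqref{eqn_ker_N^1} as universal expressions in $a_1,\ldots,a_r,\pi^{q^k-1}$, we get
$$\Psi_1\circ\eta_{J^1E}(z)=\sum_{j\ge 0}\sum_{i=0}^r B_j(a_i')^{q^j}\tau^{i+j+f}\ \in D(\varphi_E).$$
Using the $A\otimes R$-basis $\{1,\tau,\ldots,\tau^{r-1}\}$ of $M(E)$ from the lemma following Definition \ref{admissible} (applied with $d=1$), the projection $D(\varphi_E)\twoheadrightarrow D(\varphi_E)/D_i(\varphi_E) = \mathrm{Ext}_A(E,\hG)$ factors through the $R$-span of $[\tau],[\tau^2],\ldots,[\tau^{r-1}]$.

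To produce the $f_i$, reduce the above expression modulo $D_i(\varphi_E)=(z-\pi)M(E)$. Since $(z-\pi)\tau^k=\sum_{l=1}^r a_l^{q^k}\tau^{k+l}\equiv 0$ in the quotient, the key reduction relation
$$\tau^{k+r}\ \equiv\ -a_r^{-q^k}\sum_{l=1}^{r-1}a_l^{q^k}\,\tau^{k+l}\pmod{D_i(\varphi_E)}$$
iteratively brings every $\tau^n$ with $n\ge r$ down to a combination of $\tau,\tau^2,\ldots,\tau^{r-1}$. The invertibility of $a_r$ is essential here, which is why $a_r^{-1}$ is one of the $2r+1$ variables. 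Applying this reduction term-by-term to the series above produces $\partial(\Psi_1)\equiv\sum_{i=1}^{r-1}f_i(E)\cdot[\tau^i]$, and by construction each $f_i(E)$ is a universal expression in $a_1,\ldots,a_r,a_1',\ldots,a_r',a_r^{-1}$.

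It remains to verify that the $f_i$ are restricted power series in these variables. The recursion \eqref{eqn_ker_N^1} forces $v(B_j)\ge j$, so the coefficients $B_j(a_i')^{q^j}$ of the compositional series tend to zero $\pi$-adically. Each reduction step multiplies by an element of $R$ (namely $a_l^{q^k}a_r^{-q^k}$) and so preserves $\pi$-adic convergence. Hence each $f_i$ is an $R$-linear limit of polynomial expressions in the listed variables, i.e.~a restricted power series over $R$ in $2r+1$ variables. The main obstacle, and the step requiring the most care, is precisely this convergence analysis: although the reduction is algebraically straightforward, iterated substitution using $a_r^{-q^k}$ for large $k$ must be shown not to destroy either $R$-integrality or $\pi$-adic convergence, which demands careful bookkeeping of the valuations of $B_j$ against the amplification from the reduction relation.
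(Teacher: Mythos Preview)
Your proposal is correct and follows essentially the same route as the paper: reduce the CL condition to $\partial(\Psi_1)=0$ via Theorem~8.3 of \cite{BS b}, identify $\partial(\Psi_1)$ with the class of $\nu_1\circ\eta_{J^1E}$ (your $\Psi_1\circ\eta_{J^1E}$), reduce modulo inner biderivations to the basis $[\tau],\dots,[\tau^{r-1}]$ of $\mathrm{Ext}_A(E,\hG)$, and use $v(B_j)\ge j$ to conclude the resulting coefficients are restricted power series. One small slip: your reduction relation drops the $(\pi^{q^k}-\pi)\tau^k$ term, since $(z-\pi)\cdot\tau^k=(\pi^{q^k}-\pi)\tau^k+\sum_{l=1}^r a_l^{q^k}\tau^{k+l}$; this term is harmless for integrality and convergence but should be carried through.
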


\proof Recall the $A$-linear isomorphism $\nu_{1}:N^{1}\longrightarrow\hat{\mathbb{G}}_{a}$. From Theorem 5.8 in \cite{BS b}, we have 
$$\nu_{1}=\displaystyle{\sum_{i=0}^{\infty}}b_{i}\tau^{i},$$
with $v(b_{i})\geq i$. Indeed we have
\begin{align*}
b_{0}=1 \
\mathrm{and}  \ b_{i}=(\pi-\pi^{q^{i}})^{-1}\displaystyle{\sum_{j=1}^{i}}b_{i-j}c_{j}^{q^{i-j}},
\end{align*} 
where $\varphi_{N^{1}}(z)=\displaystyle{\sum_{j=0}^{r}}c_{j}\tau^{j}$. By equation \eqref{action_N^1_Drin}, we have $c_{j}=\pi^{q^{j}-1}\phi(a_{j})$.
Now consider the following $R$-linear map
 $$ {\bX}(N^{n})\xrightarrow{\partial} \mathrm{Ext}_{A}(E,\hat{\mathbb{G}}_{a}) .$$
From Theorem 8.3 in \cite{BS b}, we know that a Drinfeld module of rank $r$ is CL if and only if
 $$\partial(\Psi_{1})=0 \Leftrightarrow {\bX}_{1}(E)\neq 0 .$$ We know that $\mathrm{Ext}_{A}(E,\hat{\mathbb{G}}_{a})\cong D(\varphi_{E})/D_{i}(\varphi_{E})$ as $R$-modules. We can also identify $D(\varphi_{E})\cong \tau R\{\tau\hat{\}}$ and the inner bi-derivations are given by
$\pi m-m\circ \varphi_{E},$ for $m\in R\{\tau\hat{\}}$. The canonical extension  $J^{1}E$ of $E$ by $N^{1}$ attach a unique $(\varphi_{E},\varphi_{N^{1}})$-bi-derivation $\eta_{J^{1}E}$ given by 
\begin{equation*}
\eta_{J^{1}E}=\pi' \tau^{f} + a_{1}'{\tau}^{1+f}+\ldots+ a_{r}'{\tau}^{r+f}.
\end{equation*}
Therefore we have $\partial(\Psi_{1})=[\nu_{1}\circ \eta_{J^{1}E}]$ in $\mathrm{Ext}_{A}(E,\hat{\mathbb{G}}_{a})$. An easy computation shows that 
\begin{equation}\label{im_psi}
\nu_{1}\circ \eta_{J^{1}E}=\displaystyle{\sum_{k\geq f}\sum_{j=0}^{r}}b_{k-j-f}a_{j}'^{q^{k-j-f}}\tau^{k}.
\end{equation}
Setting $m=\tau^{j}$ we get,
$$\tau^{r+j}= a_{r}^{q^{-j}}((a_{0}-a_{0}^{q^{j}})\tau^{j}-(\displaystyle{\sum_{i=0}^{j-1}}a_{i}^{q^{j}}\tau^{i+j})).$$
 Therefore $\{[\tau],[\tau^{2}],\ldots [\tau^{r-1}] \}$ span $ \mathrm{Ext}_{A}(E,\hat{\mathbb{G}}_{a})$ as an $R$-module. Since $ \mathrm{Ext}_{A}(E,\hat{\mathbb{G}}_{a})$ has rank $r-1$ as $R$-module and $R$ is a discrete valuation ring, the subset $\{[\tau],[\tau^{2}],\ldots [\tau^{r-1}] \}$ form an $R$-basis of $\mathrm{Ext}_{A}(E,\hat{\mathbb{G}}_{a})$.
 For each $n\geq r$  substituting $[\tau^{n}]$ by the above formula in \eqref{im_psi}, we obtain 
$$ \partial(\Psi_{1})=[\nu_{1}\circ \eta_{J^{1}E}]=\displaystyle{\sum_{i=1}^{r-1}} f_{i}(a_{1},a_{2},\ldots ,a_{r},a_{1}',a_{2}',\ldots ,a_{r}',a_{r}^{-1})[\tau^{i}]=0, \ ~\mathrm{in}~ \ \mathrm{Ext}_{A}(E,\hat{\mathbb{G}}_{a}).$$
where $f_{i}$ are restricted power series in $2r+1$  variable because $v(b_{i})\geq i$. Therefore, we have $f_{i}=0$ for each $i=1,\ldots,r-1$. \qed \medskip

\bibliographystyle{amsalpha}

\begin{thebibliography}{99}

 
\bibitem{Anderson 1986}
G. Anderson, t-motives, Duke Math. J. 53 (1986), no. 2, 457–502.

\bibitem{Barc}
A. M. Barcau, Isogeny covariant differential modular forms and the space of elliptic curves up to isogeny. Compositio Math. 137 (2003), no. 3, 237–273. 

\bibitem{bps} A. Bertapelle, E. Previato, A. Saha, 
Arithmetic Jet Spaces, Journal of Algebra, Vol 623, Pages 127-153, 2023. 

\bibitem{BhSc} B. Bhatt, P. Scholze, Prisms and Prismatic
 Cohomology, Ann. of Math. (2) 196(3): 1135-1275 (2022).


\bibitem{bor11a}
        J.~Borger,  The basic geometry of {W}itt vectors {I}: The affine case. Algebra \& Number Theory   5 (2011) 231-285.


 \bibitem{bor11b}
        J.~Borger, The basic geometry of {W}itt vectors {II}: Spaces.
        Mathematische Annalen  351 (2011) 877-933.

\bibitem{BS b} J. Borger and A. Saha, Differential characters of Drinfeld module and
de Rham Cohomology, Algebra and Number theory 13:4 (2019).

\bibitem{BS a} J. Borger and A. Saha, Isocrystals associated to arithmetic jet spaces of abelian schemes, Advances in Mathematics (2019).

\bibitem{BrPa}
Brownawell, W. Dale; Papanikolas, Matthew A,  Linear independence of gamma values in positive characteristic. J. Reine Angew. Math. 549 (2002), 91–148.

\bibitem{bui92} A.~Buium.
\newblock Intersections in jet spaces and a conjecture of S. Lang.
\newblock Annals of mathematics, 136(3):557--567, 1992.

\bibitem{Buium 1995}
 A. Buium, Differential characters of abelian varieties over p-adic fields, Invent. Math. 122:2 (1995), 309–340.

\bibitem{Buium 1996}
A. Buium, Geometry of $p$-jets, Duke Math. J. 82 (1996), no. 2, 349–367. 

\bibitem{Buium 2000} A. Buium, Differential modular forms, J. Reine Angew. Math. 520 (2000), 95–167.

\bibitem{Buium-Miller}
A.~Buium, L.E.~Miller,
Solutions to arithmetic differential equations in algebraically
closed fields, Advances in Math. 375 (2020), 107342

\bibitem{BuSa1} A.~Buium,   A.~Saha, Hecke operators on differential 
modular forms mod $p$, J.  Number Theory  132 (2012)  966-997.

\bibitem{BuSa2} A.~Buium, A.~Saha, The ring of differential Fourier 
expansions,  J. of Number Theory  132 (2012)  896-937.

\bibitem{BP} A. Buium, B. Poonen, Independence of points on elliptic 
curves arising from special points on modular and Shimura curves. II. 
Local results,  Compositio Math. 145 (2009)  566–602.



\bibitem{Drinfeld_74}
V. G. Drinfeld, Elliptic modules, Mat. Sb. (N.S.) 94(136) (1974), 594–627, 656. In Russian; translated in Math. USSR-Sb. 23:4 (1974), 561–592. 

\bibitem{Drinfeld_80}
V. G. Drinfeld, Langlands' conjecture for GL(2) over functional fields, Proceedings of the International Congress of Mathematicians (Helsinki, 1978), pp. 565–574, Acad. Sci. Fennica, Helsinki, 1980.

\bibitem{Gekeler_a}
E.-U. Gekeler, de Rham cohomology and the Gauss–Manin connection for Drinfeld modules, pp. 223–255 in
p-adic analysis (Trento, 1989), edited by F. Baldassarri et al., Lecture Notes in Math. 1454, Springer, 1990.

\bibitem{Gekeler_b}  E.-U., Gekeler. de Rham cohomology for Drinfeld modules. S'eminaire de Th'eorie des Nombres, Paris 1988–1989, 57–85, Progr. Math. 91, Birkhauser Boston, Basel, Berlin, 1990.

\bibitem{GL2011}A. Genestier and V. Lafforgue, Th{\'e}orie de Fontaine en {\'e}gales caract{\'e}ristiques.,  Ann. Sci. Éc. Norm. Supér. (4) 44 (2011), no. 2, 263–360. 
  
\bibitem{gil02} H.~Gillet.
\newblock Differential algebra- a scheme theory approach.
\newblock Differential algebra and related topics (Newark, NJ, 2000),
  pages 95--123, 2002.



%


\bibitem{hessl05}
L.~Hesselholt.
\newblock The big de {R}ham-{W}itt complex.
\newblock Acta Mathematica, 214:135--207, 2015.

\bibitem{Hurl}
C. Hurlburt, Isogeny covariant differential modular forms modulo p. Compositio Math. 128 (2001), no. 1, 17–34. 


\bibitem{Lau} G. Laumon, Cohomology of Drinfeld modular varieties. Part I. Geometry, counting of points and local harmonic analysis. Cambridge Studies in Advanced Mathematics, 41. Cambridge University Press, Cambridge, 1996. xiv+344 pp. ISBN: 0-521-47060-9.

\bibitem{Manin63}
Y.~Manin.
\newblock Rational points on algebraic curves over function fields.
\newblock Izv. Akad. Nauk SSSR Ser. Mat., 27:1395--1440, 1963.

\bibitem{PS-2}
S. Pandit and A. Saha.
\newblock Delta Characters and Crystalline cohomology, 
\newblock Cambridge Journal of Mathematics 13 (2025), 301–358.

\bibitem{PS-3}
S. Pandit, A. Saha.
~On de Rham cohomology of Drinfeld modules of rank $2$, 
International Journal of Number Theory 21 (2025), 1437 - 1457. 

\bibitem{Silverman}
J. H. Silverman.
~The arithmetic of elliptic curves,
Grad. Texts in Math., 106
Springer, Dordrecht, 2009, xx+513 pp. 

\end{thebibliography}

\end{document}